\allowdisplaybreaks \numberwithin{equation}{section}
\def\step#1#2{{\emph{Step~#1}~[#2\,].}}
\numberwithin{equation}{section}
\newtheorem{theorem}{Theorem}[section]
\newtheorem{proposition}[theorem]{Proposition}
\newtheorem{lemma}[theorem]{Lemma}
\newtheorem{corollary}[theorem]{Corollary}
\theoremstyle{definition}
\newtheorem{definition}[theorem]{Definition}
\newtheorem{example}[theorem]{Example}
\newtheorem*{notation}{Notation}
\theoremstyle{remark}
\newtheorem{remark}[theorem]{Remark}
\newcommand{\gon}{\mathrm{gon}}
\newcommand{\degree}{\mathrm{d}}
\newcommand{\mult}{\mathrm{mult}}
\newcommand{\Supp}{\mathrm{Supp}}
\newcommand{\hl}{\mathrm{hl}}
\newcommand{\Alb}{\mathrm{Alb}}
\newcommand{\Pic}{\mathrm{Pic}}
\begin{document}

\title[On symmetric products of curves]{On symmetric products of curves}
\author{F. Bastianelli}
\address{Dipartimento di Matematica, Universit\`a degli Studi di Pavia, via Ferrata 1, 27100 Pavia}
\email{francesco.bastianelli@unipv.it}
\thanks{This work has been partially supported by 1) PRIN 2007 \emph{``Spazi di moduli e teorie di Lie''}; 2) INdAM (GNSAGA); 3) FAR 2008 (PV) \emph{``Variet\`a algebriche, calcolo algebrico, grafi orientati e topologici''.}}

\begin{abstract}
Let $C$ be a smooth complex projective curve of genus $g$ and let $C^{(2)}$ be its second symmetric product.
This paper concerns the study of some attempts at extending to $C^{(2)}$ the notion of gonality.
In particular, we prove that the degree of irrationality of $C^{(2)}$ is at least $g-1$ when $C$ is generic, and that the minimum gonality of curves through the generic point of $C^{(2)}$ equals the gonality of $C$.
In order to produce the main results we deal with correspondences on the $k$-fold symmetric product of $C$, with some interesting linear subspaces of $\mathbb{P}^n$ enjoying a condition of Cayley-Bacharach type, and with monodromy of rational maps.
As an application, we also give new bounds on the ample cone of $C^{(2)}$ when $C$ is a generic curve of genus ${6\leq g\leq 8}$.
\end{abstract}

\maketitle

\section{Introduction}

Let $C$ be a smooth irreducible complex projective curve of genus ${g\geq 0}$.
The \emph{gonality} of $C$ is the minimum positive integer $d$ such that $C$ admits a covering ${f\colon C\longrightarrow \mathbb{P}^1}$ of degree $d$ and we denote it by $\gon (C)$. The gonality is an important invariant of curves and it has been studied since the nineteenth century until now.
We deal throughout with two attempts at extending the notion of gonality to varieties of higher dimension.
In particular, we treat this topic on the second symmetric product $C^{(2)}$ of the curve $C$, which is the smooth surface parametrizing the unordered pairs of points of the curve.
We would like to point out that symmetric products of curves are very concrete projective varieties that are naturally defined by $C$.
Moreover, they somehow reflect the geometry of the curve and they are deeply involved in the classical theory of curves.
So it seems natural and interesting to study the problem of generalizing the notion of gonality on such varieties.

It is worth noticing that to compute the gonality of curves is a quite difficult task. Indeed, beside examples of morphisms reaching the expected minimum degree, one has to provide non-existence results for lower degrees. As it shall be clear in the following, similar remarks shall hold for both the notions we are going to introduce.

\smallskip
The most natural extension of gonality is probably the degree of irrationality.
Initially, it has been introduced in an algebraic context by Moh and Heinzer in \cite{MH}, whereas its geometric interpretation has been deeply studied
by Yoshihara \cite{TY, Y1, Y2}.
Given an irreducible complex projective variety $X$ of dimension $n$, the \emph{degree of irrationality} of $X$ is defined to be the integer
\begin{displaymath}
\degree_r(X):=\min\left\{d\in \mathbb{N}\,\left|\,\begin{array}{l} \textrm{there exists a dominant rational }\\  \textrm{map } F\colon X\dashrightarrow
\mathbb{P}^n \textrm{ of degree } d\end{array}\right. \right\}\,.
\end{displaymath}

Clearly, such a number is a birational invariant, and having ${\degree_r(X)=1}$ is equivalent to rationality.
Moreover, since any dominant rational map from a curve to $\mathbb{P}^1$ can be resolved to a morphism, the notion of degree of irrationality does
provide an extension of gonality to $n$-dimensional varieties.

We would like to recall that any dominant rational map ${C\dashrightarrow C'}$ between curves leads to the inequality ${\gon(C)\geq \gon(C')}$.
On the other hand, the existence of a dominant rational map ${X\dashrightarrow Y}$ between varieties of dimension ${n\geq 2}$ does not work analogously
on the degrees of irrationality.
Indeed there are counterexamples in the case of surfaces (cf. \cite{Y2, C}) and there are examples of non-rational threefolds that are unirational (see
for instance \cite{CG, IM}).

\smallskip
Turning to consider the second symmetric product $C^{(2)}$ of a smooth complex projective curve $C$ of genus $g$, we deal with the problem of computing its degree of irrationality.
Clearly, there is a strong connection between the existence of a dominant rational map ${F\colon C^{(2)}\dashrightarrow \mathbb{P}^2}$ and the genus of
the curve $C$.
For instance, rational and elliptic curves are such that the degree of irrationality of their second symmetric product is one and two respectively,
whereas we shall see that ${\degree_r(C^{(2)})\geq 3}$ for any curve of genus ${g\geq 2}$.

Furthermore, the degree of irrationality of the second symmetric product seems to depend on the existence of linear series on the curve as well.
Indeed, by using $g^r_d$'s on $C$ it is possible to construct rational dominant maps ${F\colon C^{(2)}\dashrightarrow \mathbb{P}^2}$ leading to the
following upper bound.
\begin{proposition}\label{proposition INTRO UPPER BOUND ON DEG IRR C(2)}
Let $C$ be a smooth complex projective curve. Let $\delta_1$ be the gonality of $C$ and for\, ${m=2,3}$, let $\delta_m$ be the minimum of the integers
$d$ such that $C$ admits a birational mapping onto a non-degenerate curve of degree $d$ in $\mathbb{P}^{m}$. Then
\begin{displaymath}
\degree_r(C^{(2)})\leq \min \left\{\delta_1^2\,,\, \frac{\delta_2(\delta_2-1)}{2}\,,\, \frac{(\delta_3-1)(\delta_3-2)}{2}-g\right\}\,.
\end{displaymath}
\end{proposition}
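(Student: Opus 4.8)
The plan is to exhibit, for each of the three quantities on the right-hand side, an explicit dominant rational map $F\colon C^{(2)}\dashrightarrow \mathbb{P}^2$ of exactly that degree; since $\degree_r(C^{(2)})$ is a minimum over all such maps, each construction yields one of the three upper bounds, and taking the best of the three gives the claim. The three maps all arise by pushing the pair $\{p,q\}$ through a convenient model of $C$: the symmetric square of the gonality pencil, the secant line of a plane model, and the secant-through-a-plane of a space model.

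For the first bound, let $f\colon C\longrightarrow \mathbb{P}^1$ be a covering of degree $\delta_1=\gon(C)$. Passing to second symmetric products, $f$ induces a morphism $C^{(2)}\longrightarrow (\mathbb{P}^1)^{(2)}$ sending $\{p,q\}$ to $\{f(p),f(q)\}$, and I would identify $(\mathbb{P}^1)^{(2)}\cong \mathbb{P}^2$. Over a general pair $\{a,b\}$ with $a\neq b$ the fibres $f^{-1}(a)$ and $f^{-1}(b)$ are disjoint and each consists of $\delta_1$ reduced points, so the preimage is the set of pairs with one point over $a$ and one over $b$; hence the degree is $\delta_1^2$. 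For the second bound, fix a birational map of $C$ onto a non-degenerate plane curve $\Gamma\subset\mathbb{P}^2$ of degree $\delta_2$, and use it to identify $C^{(2)}$ birationally with $\Gamma^{(2)}$. I would then define $F$ by sending a general pair $\{p,q\}$ to the line $\overline{pq}\in(\mathbb{P}^2)^{\ast}\cong\mathbb{P}^2$. This map is dominant since every line is secant to $\Gamma$, and the fibre over a general line $\ell$ is exactly the set of unordered pairs among the $\delta_2$ distinct points of $\Gamma\cap\ell$; as all these pairs span the single line $\ell$, the fibre has $\binom{\delta_2}{2}=\delta_2(\delta_2-1)/2$ elements.

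The third bound is the delicate one, and where I expect the real work to lie. Fix a birational map of $C$ onto a non-degenerate curve $\Gamma\subset\mathbb{P}^3$ of degree $\delta_3$, identify $C^{(2)}$ birationally with $\Gamma^{(2)}$, and fix a general plane $H\cong\mathbb{P}^2$. I would define $F$ by sending a general pair $\{p,q\}$ to the point $\overline{pq}\cap H\in H$. The fibre over a general point $o\in H$ is the set of secant lines of $\Gamma$ through $o$, each carrying a unique pair of points of $\Gamma$, so $\deg F$ equals the number of secants of $\Gamma$ through a general point of $\mathbb{P}^3$. To count these I would project $\Gamma$ from $o$ onto a plane: the image is a plane curve of degree $\delta_3$ and geometric genus $g$, and its nodes are in bijection with the secants of $\Gamma$ through $o$. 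The genus formula then yields $p_a-g=\binom{\delta_3-1}{2}-g=\tfrac{(\delta_3-1)(\delta_3-2)}{2}-g$ nodes, which is the asserted degree.

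The main obstacle is precisely the genericity needed to make this last count honest: I must ensure that for a general centre $o$ the projection of $\Gamma$ acquires only ordinary nodes, equivalently that no tangent line and no trisecant (nor any stationary secant) of $\Gamma$ passes through $o$, and that the two preimages of each node are distinct so that each secant through $o$ meets $\Gamma$ in exactly two points. This is where the argument uses that the tangent developable of $\Gamma$ is a surface and that its trisecant lines sweep out a proper subvariety of $\mathbb{P}^3$, so that a general $o$ avoids both, while a general point of the general plane $H$ is a general point of $\mathbb{P}^3$. Once these positional facts are in place, all three maps are well-defined and dominant with the computed degrees, and the inequality follows by taking the minimum of the three.
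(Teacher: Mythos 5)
Your proposal is correct and follows essentially the same route as the paper: the paper proves the proposition via exactly these three constructions (the symmetric square of the gonality pencil, the secant-line map $p+q\mapsto \overline{f(p)f(q)}\in\mathbb{G}(1,2)\cong\mathbb{P}^2$ for a plane model, and the map $p+q\mapsto\overline{f(p)f(q)}\cap H$ for a space model, with the degree computed as the number of bisecants through a general point via the nodes of a general projection, $p_a(C')-g=\frac{(\delta_3-1)(\delta_3-2)}{2}-g$). Your explicit attention to the genericity of the projection centre in the third count is a point the paper passes over in silence, but it is the same argument.
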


In the case of hyperelliptic curves of high genera the latter bound turns out to be an equality.
Namely
\begin{theorem}\label{theorem INTRO DEGIRR HYPERELLIPTIC}
Let $C$ be a smooth complex projective curve of genus $g\geq 2$ and assume that $C$ is hyperelliptic.
Then
\begin{itemize}
\item[$\mathrm{(i)}$] $3\leq \degree_r(C^{(2)})\leq 4$ when either $g=2$ or $g=3$;
\item[$\mathrm{(ii)}$] $\degree_r(C^{(2)})= 4$\, for any $g\geq 4$.
\end{itemize}
\end{theorem}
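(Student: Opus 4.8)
The plan is to read the upper bound straight off Proposition~\ref{proposition INTRO UPPER BOUND ON DEG IRR C(2)} and to put all the real work into the lower bound of case (ii). Since $C$ is hyperelliptic we have $\delta_1=\gon(C)=2$, so the proposition yields $\degree_r(C^{(2)})\leq\delta_1^2=4$ for every $g\geq 2$, settling the upper bounds in both (i) and (ii). For (i) the lower bound $\degree_r(C^{(2)})\geq 3$ is the general estimate for $g\geq 2$ recalled in the introduction, so nothing more is needed there. Everything therefore reduces to ruling out a dominant rational map $F\colon C^{(2)}\dashrightarrow\mathbb{P}^2$ of degree $3$ when $g\geq 4$.

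The first step is to compute the canonical map of $C^{(2)}$ explicitly in the hyperelliptic case. Using $H^0(C^{(2)},K_{C^{(2)}})\cong\wedge^2 H^0(C,K_C)$ and the basis $\omega_i=x^{i-1}\,dx/y$ ($1\leq i\leq g$) adapted to a model $y^2=f(x)$, I would first observe that every holomorphic $2$-form has the shape $\tfrac{P(x_1,x_2)}{y_1y_2}\,dx_1\wedge dx_2$ with $P$ antisymmetric, hence divisible by the Vandermonde $x_1-x_2$. Dividing out the distinguished form $\omega_1\wedge\omega_2=\tfrac{x_1-x_2}{y_1y_2}\,dx_1\wedge dx_2$ leaves symmetric functions of $(x_1,x_2)$, so the canonical map factors through $\pi$. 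A short triangularity computation identifies the $\binom{g}{2}$ forms $\omega_i\wedge\omega_j$ with the monomials $e_2^{a}h_c$ ($a+c\leq g-2$) in the elementary and complete symmetric functions of $x_1,x_2$, and these span exactly $H^0\!\big((\mathbb{P}^1)^{(2)},\mathcal{O}(g-2)\big)$ on $(\mathbb{P}^1)^{(2)}\cong\mathbb{P}^2$. Thus the canonical map factors as
\begin{displaymath}
\phi_{K_{C^{(2)}}}\colon C^{(2)}\xrightarrow{\ \pi\ }(\mathbb{P}^1)^{(2)}\cong\mathbb{P}^2\xrightarrow{\ v_{g-2}\ }\mathbb{P}^{\binom{g}{2}-1},
\end{displaymath}
where $\pi$ is the degree-$4$ morphism $\{p,q\}\mapsto\{x(p),x(q)\}$ induced by the $g^1_2$ and $v_{g-2}$ is the full $(g-2)$-uple Veronese embedding. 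The role of $g\geq 4$ is that then $\deg v_{g-2}=g-2\geq 2$, so the Veronese surface $S=v_{g-2}(\mathbb{P}^2)$ has no three distinct collinear points (three distinct points of $\mathbb{P}^2$ always impose independent conditions on $|\mathcal{O}(d)|$ for $d\geq 2$).

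Now suppose such an $F$ exists. Since $\deg\pi=4$ and $\deg F=3$ are coprime, the compositum of the subfields $\pi^{*}\mathbb{C}(\mathbb{P}^2)$ and $F^{*}\mathbb{C}(\mathbb{P}^2)$ inside $\mathbb{C}(C^{(2)})$ is the whole field, so $\pi\times F\colon C^{(2)}\dashrightarrow\mathbb{P}^2\times\mathbb{P}^2$ is birational onto its image, in particular generically injective. I would then invoke the Cayley--Bacharach property of the general fibre of a map to $\mathbb{P}^2$: for general $p$ the fibre $F^{-1}(p)=\{x_1,x_2,x_3\}$ is a reduced length-$3$ scheme satisfying the Cayley--Bacharach condition with respect to $|K_{C^{(2)}}|$, whence its canonical images $v_{g-2}(\pi(x_i))$ fail to impose independent conditions, i.e. span a line. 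On the other hand, generic injectivity of $\pi\times F$ forces the three points $\pi(x_1),\pi(x_2),\pi(x_3)$ — all lying over the same value $p$ — to be pairwise distinct, and $v_{g-2}$ is injective, so $v_{g-2}(\pi(x_1)),v_{g-2}(\pi(x_2)),v_{g-2}(\pi(x_3))$ would be three distinct collinear points of $S$. This contradicts the absence of collinear triples on $S$, proving $\degree_r(C^{(2)})\geq 4$.

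I expect the main obstacle to be the Cayley--Bacharach input, namely a rigorous proof that the general fibre of $F$ satisfies the Cayley--Bacharach condition with respect to $|K_{C^{(2)}}|$; this is where one must turn the rational map $F$ into a statement about $2$-forms, presumably via a trace/adjunction argument on a resolution of $F$, and it is exactly the Cayley--Bacharach machinery announced in the abstract. The explicit factorization $\phi_{K_{C^{(2)}}}=v_{g-2}\circ\pi$ and the coprimality trick are comparatively routine. It is worth noting that both ingredients degenerate precisely for $g\leq 3$ — for $g=3$ one has $v_1=\mathrm{id}$ and $\mathbb{P}^2$ is full of collinear triples, while for $g=2$ there are essentially no $2$-forms — which is consistent with (i) leaving the exact value of $\degree_r(C^{(2)})$ undetermined in those cases.
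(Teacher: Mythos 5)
Your proposal is correct, and it reaches the conclusion by a genuinely different route from the paper's. The two proofs share the same external inputs at the start and at one key point: the upper bound $\degree_r(C^{(2)})\leq\delta_1^2=4$ from Proposition \ref{proposition INTRO UPPER BOUND ON DEG IRR C(2)}, the lower bound $3$ from Proposition \ref{proposition ALZATI PIROLA}, and the Cayley--Bacharach property of a general fiber of a degree-$3$ map, which is not something you need to re-derive --- it is exactly Proposition \ref{proposition LOPEZ PIROLA} combined with Example \ref{example CORRESPONDENCE of RATIONAL MAP} (the graph of $F$ is a null-trace correspondence), so the ``main obstacle'' you flag is already available. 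After that the paths diverge. The paper stays in $\mathbb{P}^{g-1}$: it feeds the Cayley--Bacharach data into Theorem \ref{theorem CORRESPONDENCES ON C(k)} (which rests on the special-position theory of Section \ref{section LINEAR SUBSPACES}) to put the six canonical images $\phi(p_i)$ in a plane, uses that $\phi(C)$ is a double cover of the rational normal curve, and then runs a monodromy argument (indistinguishability of the points of the fibers of $F$ and of $G$) together with a case analysis on whether the plane meets $\phi(C)$ in three or two points, with Example \ref{example THREE LINES IN Pn} and Lemmas \ref{lemma MONODROMY OF THE FIBER}, \ref{lemma LINEAR SERIES GIVEN BY THE FIBER} supplying the contradictions. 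You instead work with the canonical map of $C^{(2)}$ itself, via the factorization $\psi_2=v_{g-2}\circ\pi$ through $(\mathbb{P}^1)^{(2)}\cong\mathbb{P}^2$ (your triangularity computation with $e_2^a h_c$ is correct, and the dimension count $\binom{g}{2}=h^0(\mathbb{P}^2,\mathcal{O}(g-2))$ confirms surjectivity onto the full system); then Cayley--Bacharach makes the three points $v_{g-2}(\pi(x_i))$ collinear, the coprimality of $\deg\pi=4$ and $\deg F=3$ (the compositum of the two pulled-back function fields has index dividing $\gcd(3,4)=1$) makes $\pi\times F$ birational onto its image and hence the $\pi(x_i)$ pairwise distinct, and the absence of trisecant lines to $v_{g-2}(\mathbb{P}^2)$ for $g-2\geq 2$ closes the argument. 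What each buys: your proof entirely avoids monodromy and the Section \ref{section LINEAR SUBSPACES} machinery, and is arguably shorter and more transparent; on the other hand the coprimality trick is tailored to excluding exactly degree $3$ against the degree-$4$ map $\pi$, whereas the paper's tools are the ones it must develop anyway for the non-hyperelliptic and very-general cases. One small point you should make explicit: $|K_{C^{(2)}}|$ has the antidiagonal $\{p+\iota(p)\}$ as a fixed component (numerically $K_{C^{(2)}}\equiv(2g-2)x-\frac{\delta}{2}$ while $\pi^*\mathcal{O}(g-2)\equiv(2g-4)x$), so in passing from the Cayley--Bacharach condition on effective canonical divisors to the statement about degree-$(g-2)$ curves through the $\pi(x_i)$ you should note that the general fiber points avoid this fixed curve; this is immediate for a general fiber and does not affect the proof.
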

On the other hand, when the curve is assumed to be non-hyperelliptic, the situation is more subtle and it is no longer true that the degree of
irrationality of $C^{(2)}$ equals the square of the gonality of $C$ for high enough genus. The main result we prove on generic curves is the
following.
\begin{theorem}\label{theorem INTRO DEGIRR VERY GENERAL CURVES}
Let $C$ be a smooth complex projective curve of genus $g\geq 4$ and assume that $C$ is very general in the moduli space $\mathcal{M}_g$. Then
${\degree_r(C^{(2)})\geq g-1}$.
\end{theorem}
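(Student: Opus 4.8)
The plan is to argue by contradiction: set $d:=\degree_r(C^{(2)})$ and suppose $d\leq g-2$. Choose a dominant rational map $F\colon C^{(2)}\dashrightarrow\mathbb{P}^2$ of degree $d$, resolve it to a morphism $\widetilde F\colon S\to\mathbb{P}^2$ on a smooth birational model $S$ of $C^{(2)}$, and let $Z=\{z_1,\dots,z_d\}$, with $z_i=\{p_i,q_i\}$, be a general fibre (avoiding the exceptional and base loci). The geometric input I would exploit is the identification $H^0\!\left(C^{(2)},\Omega^2\right)\cong\wedge^2H^0(C,K_C)$, together with its projective incarnation: writing $\phi\colon C\hookrightarrow\mathbb{P}^{g-1}=\mathbb{P}\!\left(H^0(K_C)^{\vee}\right)$ for the canonical embedding (here $C$ is non-hyperelliptic, being very general), the canonical map of $C^{(2)}$ is the secant map $\{p,q\}\mapsto\overline{\phi(p)\,\phi(q)}\in\mathbb{G}(1,g-1)$ followed by the Pl\"ucker embedding $\mathbb{G}(1,g-1)\hookrightarrow\mathbb{P}\!\left(\wedge^2H^0(K_C)^{\vee}\right)$, and holomorphic $2$-forms correspond exactly to the hyperplanes of this Pl\"ucker space.

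Since $\mathbb{P}^2$ carries no non-zero holomorphic $1$- or $2$-forms, the trace homomorphisms annihilate every pulled-back form, and this yields two complementary pieces of information. First, the differential of the map $\mathbb{P}^2\dashrightarrow\Pic^{2d}(C)$, $y\mapsto\big[\textstyle\sum_i(p_i+q_i)\big]$, is computed by traces of the $1$-forms coming from $\Alb(C^{(2)})=J(C)$, hence vanishes; as $\mathbb{P}^2$ is connected this map is constant, so the $2d$ points of every fibre form a divisor in one fixed complete linear series $|D_0|$ of degree $2d$ which moves in a $2$-dimensional family, and therefore $r:=\dim|D_0|\geq 2$. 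Second, the trace of the holomorphic $2$-forms forces the general fibre $Z$ to satisfy the \emph{Cayley--Bacharach condition} with respect to $|K_{C^{(2)}}|$: no such form can vanish on all but one point of $Z$. Through the Pl\"ucker description this says precisely that the $d$ points $[a_i\wedge b_i]$ (with $a_i,b_i$ representatives of $\phi(p_i),\phi(q_i)$) lie in Cayley--Bacharach position, i.e.\ each belongs to the linear span of the others.

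The heart of the proof is to convert this into a lower bound on $d$. The key point is that the Pl\"ucker points $[a_i\wedge b_i]$ are not free, since they lie on $\mathbb{G}(1,g-1)$, so their linear dependence is highly constrained. I would isolate a self-contained lemma to the effect that if $d$ secant lines of $C$ are in Cayley--Bacharach position in Pl\"ucker space, then the $2d$ points $p_i,q_i$ span a linear subspace of $\mathbb{P}^{g-1}$ whose dimension drops well below the generic value $2d-1$; by the geometric Riemann--Roch theorem this is the statement that $D=\sum_i(p_i+q_i)$ moves in a linear series $g^{s}_{2d}$ with $s$ large. One then pins down $s$ sharply enough that the Brill--Noether number satisfies $\rho(g,s,2d)<0$. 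Since $C$ is very general, the Brill--Noether theorem (with Gieseker--Petri guaranteeing the expected behaviour) forbids such a series unless $d\geq g-1$, which is the desired contradiction. To ensure the general-position and Grassmannian arguments apply to the actual fibre, and that the Cayley--Bacharach relations are non-degenerate (every coefficient genuinely present), I would invoke the monodromy of $F$: showing it acts as a $2$-transitive, in fact large, subgroup of $S_d$ forces the pairs $\{p_i,q_i\}$ and their grouping to be as general as $|D_0|$ permits.

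The main obstacle is precisely this linear-algebra step on the Grassmannian. Naively, Cayley--Bacharach position only forces the $2d$ points to fail linear general position, which --- since they must anyway be dependent once $2d>g$ --- yields nothing better than the weak bound $d\gtrsim g/2$. Extracting the sharp value $g-1$ requires using the full decomposability of the bivectors $a_i\wedge b_i$: collinearity on $\mathbb{G}(1,g-1)$ forces the corresponding secant lines to be coplanar or concurrent, and analogous degeneracies govern higher-dimensional spans, so the span of the $2d$ points collapses far more than linear dependence alone would suggest. Calibrating this Cayley--Bacharach lemma for linear subspaces so that the resulting span estimate meshes exactly with the Brill--Noether inequality for very general $C$ is the crux, and is where the interplay of correspondences on symmetric products, the Grassmannian geometry, and monodromy is indispensable.
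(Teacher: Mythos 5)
Your toolkit is the right one, and it is essentially the paper's: the null-trace correspondence giving the Cayley--Bacharach condition, its translation via the Gauss/Pl\"ucker map into linear subspaces in special position, the Albanese/Abel argument producing a single complete linear series from all the fibres, and Brill--Noether theory for the very general curve. Moreover, the reduced case of your plan genuinely works: the span bound that the machinery actually yields (the paper's theorem on correspondences, via subspaces in special position) is $\dim\overline{\phi(D_y)}\leq d-1$, so if the $2d$ points of a general fibre are distinct, geometric Riemann--Roch gives a $g^s_{2d}$ with $s\geq d$, and $\rho(g,d,2d)=g-(d+1)(g-d)=-d(g-d-1)+0$ is negative precisely when $0<d<g-1$, the desired contradiction (the paper uses Clifford here instead, to the same effect).

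The genuine gap is that the step you yourself call the heart of the proof collapses when the fibre is non-reduced, and nothing in your proposal repairs it. Transitivity of monodromy (which follows from connectedness) only shows that all points of $D_y$ carry one common multiplicity $a$, so $D_y=a(q_1+\cdots+q_m)$ with $m=2d/a$; once $a\geq 2$ one has $m\leq d$, and the span bound $\leq d-1$ says nothing whatsoever about $m\leq d$ points. There is also no sharper bound to be extracted from ``full decomposability of the bivectors'': the paper's examples (e.g.\ lines on a surface of minimal degree) show $\left[\frac{kd}{2}\right]-1$ is sharp. The critical configuration is $a=2$, namely $F^{-1}(y)=\{q_1+q_2,\,q_2+q_3,\ldots,\,q_d+q_1\}$: it is compatible with transitive (dihedral) monodromy, the span bound is automatic for its $d$ distinct points, and the Abel step yields only a complete $g^2_d$, for which Brill--Noether requires merely $d\geq\frac{2g+6}{3}$, perfectly consistent with $d\leq g-2$ as soon as $g\geq 12$. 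So no combination of span estimates and Brill--Noether numerics can exclude it; and your hope of proving the monodromy $2$-transitive (which would rule out the polygon) is unsupported, since there is no uniform-position statement for fibres of a minimal-degree map $C^{(2)}\dashrightarrow\mathbb{P}^2$. The paper kills this case with an ingredient absent from your proposal: from the cyclic configuration it manufactures a nontrivial automorphism $\alpha_q$ of $C$, sending $p$ to the unique $p'$ with $F(p+q)=F(q+p')$, contradicting $Aut(C)=\{Id_C\}$ for a very general curve. For $a\geq 3$ it then applies Brill--Noether to the reduced series of degree $m$: one needs $m\geq\frac{2g+6}{3}$, while $m=\frac{2d}{a}\leq\frac{2d}{3}<\frac{2g-2}{3}$. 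Without the multiplicity analysis and the automorphism argument, your proof cannot close.
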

Without any assumption of generality on $C$, the latter inequality does not hold, but it is still possible to provide some estimations on the degree of
irrationality of second symmetric products of non-hyperelliptic curves. The following result summarizes the lower bounds we achieve and we list them by
genus.
\begin{theorem}\label{theorem INTRO DEGIRR NON-HYPERELLIPTIC}
Let $C$ be a smooth complex projective curve of genus $g\geq 3$ and assume that $C$ is non-hyperelliptic.
Then the following hold:
\begin{itemize}
\item[$\mathrm{(i)}$] if $g= 3,4$, then $\degree_r(C^{(2)})\geq 3$;
\item[$\mathrm{(ii)}$] if $g= 5$, then $\degree_r(C^{(2)})\geq 4$;
\item[$\mathrm{(iii)}$] if $g= 6$, then $\degree_r(C^{(2)})\geq 5$;
\item[$\mathrm{(iv)}$] if $g\geq 7$, then $\degree_r(C^{(2)})\geq \max \left\{\,6,\,\gon(C)\,\right\}$.
\end{itemize}
\end{theorem}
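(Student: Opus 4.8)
The plan is to estimate from below the degree $d$ of an arbitrary dominant rational map $F\colon C^{(2)}\dashrightarrow\mathbb{P}^2$ by playing off two unrelated lower bounds against each other: a \emph{gonality bound} $d\geq\gon(C)$, obtained by restricting $F$ to a covering family of curves, and a \emph{Cayley--Bacharach bound}, obtained by analysing a general fibre of $F$ on the canonical model of $C$. Throughout, $Z=F^{-1}(p)=\{z_1,\dots,z_d\}$ denotes a general fibre, and each $z_i$ is read as an effective divisor $a_i+b_i$ on $C$. Note first that the case $g=3,4$ of~(i) is already covered by the uniform estimate $\degree_r(C^{(2)})\geq 3$, valid for every $g\geq 2$: indeed a surface admitting a degree-two dominant map to $\mathbb{P}^2$ is a double plane and hence has vanishing irregularity, whereas $q(C^{(2)})=g>0$. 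So the real content lies in~(ii)--(iv).

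For the gonality bound I would pull back a general line $\ell\subset\mathbb{P}^2$ to a curve $B:=\overline{F^{-1}(\ell)}\subset C^{(2)}$. Then $F|_B\colon B\to\ell\cong\mathbb{P}^1$ has degree $d$, so $\gon(B)\leq d$, and as $\ell$ varies the curves $B$ sweep out $C^{(2)}$ and pass through its general point. Invoking the companion result that the minimum gonality of a curve through a general point of $C^{(2)}$ equals $\gon(C)$, one obtains $\gon(C)\leq\gon(B)\leq d$, which settles the term $\gon(C)$ in~(iv).

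The heart of the argument is the Cayley--Bacharach bound. The key input, to be established beforehand as a lemma, is that the general fibre $Z$ satisfies the Cayley--Bacharach condition with respect to the canonical system $|K_{C^{(2)}}|$: no holomorphic two-form vanishing along $Z\setminus\{z_i\}$ is nonzero at $z_i$. Since $C$ is non-hyperelliptic, $H^0(C^{(2)},\omega_{C^{(2)}})\cong\wedge^2 H^0(C,\omega_C)$, and the associated canonical morphism $\psi\colon C^{(2)}\to\mathbb{P}^{\binom{g}{2}-1}$ sends a pair $\{a,b\}$ to the secant line $\overline{\phi_K(a)\,\phi_K(b)}$ of the canonical curve $\phi_K(C)\subset\mathbb{P}^{g-1}$, Pl\"ucker-embedded. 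Under $\psi$ the Cayley--Bacharach condition becomes the statement that each $\psi(z_i)$ lies in the linear span of the remaining ones; equivalently, the $d$ secant lines $\ell_i=\overline{\phi_K(a_i)\phi_K(b_i)}$ form a configuration of linear subspaces of $\mathbb{P}^{g-1}$ enjoying a Cayley--Bacharach type condition. The elementary but crucial consequence is the inequality $d\geq\dim\langle\psi(z_1),\dots,\psi(z_d)\rangle+2$, so everything is reduced to bounding from below the dimension of the span of these secants.

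Estimating that span is where monodromy and the geometry of canonical curves enter, and is the main obstacle. A span that is too small produces a nontrivial linear relation among the $\psi(z_i)$; because the monodromy group of $F$ permutes the points of $Z$, such a relation may be taken monodromy-invariant, and pulling it back through $\psi$ forces the secants $\ell_i$ into special position --- many coplanar secants, or an unexpected multisecant --- which the uniform general position theorem for the points $a_i,b_i$ on $\phi_K(C)$ forbids. Carrying this out shows the secants span at least a $\mathbb{P}^{\min(g-3,\,4)}$, the ceiling $4$ being the most one can guarantee without a genericity hypothesis on $C$, which is exactly why the bound stabilises at $6$ for $g\geq 7$ while the very general case gains the stronger value $g-1$. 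Feeding this into $d\geq\dim\langle\cdot\rangle+2$ gives $d\geq 4$ for $g=5$, $d\geq 5$ for $g=6$, and $d\geq 6$ for $g\geq 7$; combined with $d\geq\gon(C)$ this yields the stated bounds. The remaining work, and the source of the individual thresholds, is to rule out genus by genus the finitely many special non-hyperelliptic curves --- trigonal curves and, for $g=6$, smooth plane quintics --- on which Petri's theorem and the secant analysis degenerate and the span must be controlled by hand.
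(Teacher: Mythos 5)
Your reduction of the Cayley--Bacharach condition to the inequality $d\geq\dim\langle\psi(z_1),\dots,\psi(z_d)\rangle+2$ is correct, and your derivation of the term $\gon(C)$ in (iv) by pulling back general lines and invoking Theorem \ref{theorem INTRO GONALITY OF MOVING CURVES} is legitimate (that theorem is proved independently of the present one, so there is no circularity). But the two remaining pillars of your argument fail. First, the claim settling (i) --- that a surface admitting a degree-two dominant rational map to $\mathbb{P}^2$ has vanishing irregularity --- is false: if $C$ is hyperelliptic of genus $g\geq 1$, then $C\times\mathbb{P}^1$ dominates $\mathbb{P}^1\times\mathbb{P}^1$ with degree $2$, hence admits a degree-two dominant rational map to $\mathbb{P}^2$, yet $q(C\times\mathbb{P}^1)=g>0$. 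A degree-two field extension is Galois, so such a surface carries a birational involution with rational quotient; this kills the invariant part of $H^0(\Omega^1)$ but not the anti-invariant part. The paper instead obtains (i) from the Alzati--Pirola holomorphic-length inequality (Proposition \ref{proposition ALZATI PIROLA}).

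Second, and more seriously, the lower bound ``the secants span at least a $\mathbb{P}^{\min(g-3,\,4)}$'' --- on which all of (ii)--(iv) rests --- is only asserted, and the mechanism you sketch cannot deliver it. The points $\phi_K(a_i),\phi_K(b_i)$ of a general fibre are precisely \emph{not} in uniform general position: the Cayley--Bacharach condition itself forces the $2d$ of them to span at most a $\mathbb{P}^{d-1}$ (Theorem \ref{theorem CORRESPONDENCES ON C(k)}), and the uniform position principle governs hyperplane sections of an irreducible curve, not fibres of an arbitrary rational map on $C^{(2)}$; monodromy of $F$ permutes the $z_i$, but by itself it does nothing to prevent all the secant lines from forming, say, a planar pencil, whose Pl\"ucker points span only a line. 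What actually rules out small spans in the paper is Brill--Noether theory on $C$ itself, through a chain your proposal never touches: monodromy on $C\times C$ shows the fibre divisor has uniform multiplicity, $D_y=a(q_1+\cdots+q_m)$ (Lemma \ref{lemma MONODROMY OF THE FIBER}); Abel's theorem plus the vanishing of the Albanese variety of a rational surface shows $|q_1+\cdots+q_m|$ is a complete $g^r_m$ with $r\geq 2$ (Lemma \ref{lemma LINEAR SERIES GIVEN BY THE FIBER}); Clifford's theorem excludes $a=1$ whenever $d<g-1$ (Lemma \ref{lemma DISTINCT POINTS}); and then Martens' bound $\dim W^r_m(C)\leq m-2r-1$, together with the fact that a curve with a $g^2_5$ is birational to a plane quintic of arithmetic genus $6$, produces exactly the thresholds $4,5,6$. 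Note also that Martens' theorem holds for every non-hyperelliptic curve, so no separate treatment of trigonal curves or plane quintics is needed; your appeal to Petri's theorem and a curve-by-curve analysis points at the wrong tool. As it stands, your proposal proves (i) by an incorrect argument, proves the $\gon(C)$ half of (iv), and leaves the numerical bounds in (ii)--(iv) without proof.
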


\smallskip
Another attempt to extend the notion of gonality to $n$-dimensional varieties is the following.
Given an irreducible complex projective variety $X$, we define the number
\begin{displaymath}
\degree_o(X):=\min\left\{d\in \mathbb{N}\,\left|\,\begin{array}{l} \textrm{there exists a family } \mathcal{E}=\left\{E_t\right\}_{t\in T} \\
\textrm{covering }X \textrm{ whose generic member is} \\ \textrm{an irreducible $d$-gonal curve} \end{array}\right. \right\}
\end{displaymath}
and we may call it the \emph{degree of gonality} of $X$.
Hence $\degree_o(X)$ is the minimum gonality of curves passing through the generic point of $X$.
Notice that the generic member $E_t$ is a possibly singular $d$-gonal curve, i.e. its normalization $\widetilde{E}_t$ admits a degree $d$ covering
${f_t\colon \widetilde{E}_t\longrightarrow\mathbb{P}^1}$.
The degree of gonality is a birational invariant, and $\degree_o(X)=1$ if and only if $X$ is an uniruled variety.
Moreover, ${\degree_o(C)=\gon(C)}$ for any complex projective curve $C$.

Although this second extension of the notion of gonality appears less intuitive and more artificial than the degree of irrationality, the degree of
gonality has a nice behavior with respect to dominance.
Namely, if there exists a dominant rational map ${X\dashrightarrow Y}$ between two irreducible complex projective varieties of dimension $n$, then
${\degree_o(X)\geq \degree_o(Y)}$ as in the one-dimensional case.

\smallskip
Dealing with the problem of computing the degree of gonality of the second symmetric product $C^{(2)}$ of a smooth complex projective curve $C$, it is
easy to check that ${\degree_o(C^{(2)})=1}$ when the curve is either rational or elliptic, and ${\degree_o(C^{(2)})=2}$ for any curve of genus two.
Moreover, we prove the following.
\begin{theorem}\label{theorem INTRO GONALITY OF MOVING CURVES}
Let $C$ be a smooth complex projective curve of genus ${g\geq 3}$.
For a positive integer $d$, let ${\mathcal{E}=\{E_t\}_{t\in T}}$ be a family of curves on $C^{(2)}$ parametrized over a smooth variety $T$, such that
the generic fiber $E_t$ is an irreducible $d$-gonal curve and for any point ${P\in C^{(2)}}$ there exists ${t\in T}$ such that ${P\in E_t}$.
Then ${d\geq \gon(C)}$.

\noindent Moreover, under the further assumption ${g\geq 6}$ and ${Aut(C)=\left\{Id_C\right\}}$, we have that equality holds if and only if $E_t$ is
isomorphic to $C$.
\end{theorem}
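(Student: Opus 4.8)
The plan is to prove both assertions by extracting, from the gonality pencil of a general member of the covering family, an honest low-degree pencil on $C$ itself, and then analysing when that construction loses nothing.

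\textbf{Set-up and reduction.} Write $x$ for the class of a ``fibre'' curve $C_p=\{p+y\mid y\in C\}\cong C$, and recall that $C_p\cdot C_{p'}=1$. Since the $C_p$ sweep out $C^{(2)}$ and each is isomorphic to $C$, one already has $\degree_o(C^{(2)})\le\gon(C)$, so the content of the first assertion is the reverse inequality. Fix a general point of $C^{(2)}$ and a general member $E=E_t$ of the family through it, with normalisation $\nu\colon\widetilde E\to E\subset C^{(2)}$ and a gonality pencil $f\colon\widetilde E\to\mathbb P^1$ of degree $d$. I may assume $E$ is not a fibre curve $C_p$, otherwise $\widetilde E\cong C$, $d=\gon(C)$, and we are already in the equality case.

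\textbf{Rigidity via Abel--Jacobi.} A point of $C^{(2)}$ is an effective divisor of degree $2$ on $C$, so each fibre $\xi_s=f^{-1}(s)$ of the gonality map, a set of $d$ points of $E$, determines an effective divisor $\eta_s$ of degree $2d$ on $C$ (the sum of the $d$ pairs). This gives a morphism $\mathbb P^1\to C^{(2d)}$, $s\mapsto\eta_s$; composing with the Abel--Jacobi map $C^{(2d)}\to\Pic^{2d}(C)$ and using that the abelian variety $\Pic^{2d}(C)$ contains no rational curves, the composite is constant. Hence all $\eta_s$ lie in a single complete linear system $|D|$ with $\deg D=2d$. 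Moreover the family $\{\eta_s\}$ is non-constant (otherwise the pair-map of $E$ would factor through $f$, forcing it to be constant since $g\ge 1$, contradicting $E\ne C_p$) and its members cover $C$.

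\textbf{Halving the degree, and the main obstacle.} The span of $\{\eta_s\}$ only yields $\gon(C)\le 2d$ directly; the real point is to use just one point of each pair. Introduce the ``universal divisor'' curve $\widehat E=\{(w,c)\in\widetilde E\times C\mid c\in\nu(w)\}$, with the $2{:}1$ map $\widehat E\to\widetilde E$ and the point-map $a\colon\widehat E\to C$, $(w,c)\mapsto c$, of degree $m=E\cdot x\ge 1$. If this double cover splits over the normalisation (equivalently, the two points of the moving pair can be ordered globally along $E$), then $a$ restricts to a morphism $\widetilde E\to C$ on the moving sheet and $\alpha_s:=a_*(\xi_s)$ is a non-constant, $C$-covering family of divisors of degree $d$; the same Abel--Jacobi rigidity places it in a single $|D_0|$ with $\deg D_0=d$, and removing its base locus produces a base-point-free pencil of degree $\le d$, whence $\gon(C)\le d$. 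This settles the first assertion in the split case, which includes the fibre-curve family. The genuine difficulty is the \emph{non-split} case, where the ordering of the two points has nontrivial monodromy along $E$: then there is no morphism $\widetilde E\to C$ and the naive push-forward only returns the degree-$2d$ divisor $\eta_s$. I expect this to be the crux, to be resolved by a monodromy argument: pass to the irreducible double cover $\widehat E$, with the swapping involution $\iota$, and analyse the $\iota$-equivariant geometry of the pulled-back pencil together with $a$, showing that the monodromy still forces a degree-$d$ subsystem on $C$ (equivalently, that the ordered lift of the family to $C\times C$ cannot inflate the gonality count).

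\textbf{The equality case.} Assume now $g\ge 6$, $\mathrm{Aut}(C)=\{\mathrm{Id}_C\}$ and $d=\gon(C)$. Tracing the inequalities backwards, equality forces the moving part of $|D_0|$ to be a base-point-free $g^1_d$ computing $\gon(C)$, with associated $\beta\colon C\to\mathbb P^1$, and comparison of fibres forces $f=\beta\circ a$; equating degrees $md=d$ gives $m=\deg a=1$, so $a\colon\widetilde E\xrightarrow{\sim}C$ and $E$ is the image of $C$ under $c\mapsto\{c,b(c)\}$ for a morphism $b\colon C\to C$. Since $g\ge 2$ and $\mathrm{Aut}(C)=\{\mathrm{Id}_C\}$, the self-map $b$ is either constant or the identity; the identity gives the diagonal, a single curve that cannot cover $C^{(2)}$, so $b$ is constant and $E=C_p$ is a fibre curve, hence $E\cong C$. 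The hypothesis $g\ge 6$ is used to exclude the remaining curves $E\cong C$ of gonality $\gon(C)$ sitting on $C^{(2)}$ off the fibre family.
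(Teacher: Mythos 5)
Your argument has a genuine, self-acknowledged gap exactly where the theorem is hard: the non-split case is never proved. Everything before that point is fine (the Abel--Jacobi rigidity and the split-case push-forward are correct, and in spirit close to one step of the paper's proof), but the sentence beginning ``I expect this to be the crux, to be resolved by a monodromy argument'' is a statement of hope, not a proof. Worse, the resolution you sketch cannot work as stated, because it operates at the level of a single curve $E$: for a hyperelliptic $C$ of genus $g\geq 3$ the curve $\Gamma_2(g^1_2)=\{p+\iota(p)\}\subset C^{(2)}$ is rational (gonality $1<2=\gon(C)$) and its incidence double cover $\widehat{E}\to\widetilde{E}$ is precisely the irreducible hyperelliptic cover $C\to\mathbb{P}^1$, i.e.\ non-split; so ``monodromy forces a degree-$d$ subsystem on $C$'' is simply false for one non-split curve. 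Any correct treatment of the non-split case must use the hypothesis that the $E_t$ cover $C^{(2)}$, and this is exactly what the paper's machinery does and your proposal lacks: the covering family is packaged into a degree-$d$ correspondence $\Gamma\subset Y\times C^{(2)}$ over a \emph{ruled} parameter surface $Y=\mathbb{P}^1\times B$, so $H^{2,0}(Y)=0$ forces the trace map to vanish; Cayley--Bacharach for $|K_{C^{(2)}}|$ then puts the Gauss images $\mathcal{G}_2(P_i)$ of the $d$ pairs in special position, the special-position theorem bounds the span of all $2d$ canonical images by $d-1$, and geometric Riemann--Roch plus Clifford's theorem kill the case where the $2d$ points are distinct, while Abel's theorem handles the non-reduced configurations by forcing $E_t=C+q\cong C$. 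None of this (nor any substitute for it) appears in your proposal, so the first assertion is unproved whenever the two points of the moving pair cannot be globally ordered along $E$ — which, as the example of curves $\Gamma_2(\mathcal{D})$ subordinate to pencils shows, is the typical situation for covering families.

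The equality statement inherits the same gap, since your backward-tracing assumes the split-case chain of inequalities; the paper instead must deal with the configuration $D=2(q_1+\cdots+q_d)$, $F^{-1}$-fibers forming a ``polygon'' $P_1=q_1+q_2,\dots,P_d=q_d+q_1$, and rules out even $d$ by producing a nontrivial automorphism of $C$ (contradicting $Aut(C)=\{Id_C\}$) while for odd $d$ it builds an explicit isomorphism $C\cong E_t$ — note the conclusion is $E_t\cong C$, not $E_t=C_p$ as you claim. Finally, your closing remark misidentifies the role of $g\geq 6$: curves $E\cong C$ off the fiber family are perfectly compatible with the statement; the hypothesis $g\geq 6$ is used so that $\deg D=2\gon(C)\leq 2\left[\frac{g+3}{2}\right]<2g-2$, which is what allows Clifford's theorem to exclude the all-points-distinct configuration in the equality analysis.
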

In particular, we have ${\degree_o(C^{(2)})\geq\gon(C)}$. Furthermore, for any smooth curve $C$, its second symmetric product is covered a family of
copies of $C$.
Hence ${\degree_o(C^{(2)})\leq \gon(C)}$ and the problem of computing the degree of gonality of second symmetric products of curves is now totally
understood.
\begin{theorem}\label{theorem INTRO DEGREE OF GONALITY}
Let $C$ be a smooth complex projective curve of genus ${g\geq 3}$. Then ${\degree_o(C^{(2)})=\gon(C)}$.
\end{theorem}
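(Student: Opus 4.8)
The plan is to establish the two inequalities $\degree_o(C^{(2)})\geq \gon(C)$ and $\degree_o(C^{(2)})\leq \gon(C)$ separately; their combination yields the asserted equality. The first inequality is an immediate consequence of Theorem \ref{theorem INTRO GONALITY OF MOVING CURVES}. Indeed, by definition $\degree_o(C^{(2)})$ is the least integer $d$ for which there exists a family $\mathcal{E}=\{E_t\}_{t\in T}$ covering $C^{(2)}$ whose generic member is an irreducible $d$-gonal curve. After possibly replacing $T$ by a smooth dense open subset, any such family satisfies the hypotheses of Theorem \ref{theorem INTRO GONALITY OF MOVING CURVES}, so its generic gonality $d$ must satisfy $d\geq \gon(C)$. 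Taking the minimum over all such families gives $\degree_o(C^{(2)})\geq \gon(C)$.

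For the reverse inequality I would exhibit an explicit covering family whose members are copies of $C$. Consider the double cover $\pi\colon C\times C \longrightarrow C^{(2)}$, $(p,q)\mapsto [p+q]$, together with the first projection $\mathrm{pr}_1\colon C\times C\longrightarrow C$. Setting $T:=C$ and $E_p:=\pi(\{p\}\times C)$, one obtains for each $p\in C$ a curve $E_p=\{[p+q]\ :\ q\in C\}\subset C^{(2)}$ which is isomorphic to $C$ via $q\mapsto [p+q]$; in particular each $E_p$ is irreducible with $\gon(E_p)=\gon(C)$. Since every point $[a+b]\in C^{(2)}$ lies on $E_a$, the family $\{E_p\}_{p\in C}$ covers $C^{(2)}$, and it is thus a covering family whose generic member---indeed every member---is an irreducible $\gon(C)$-gonal curve. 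By the definition of the degree of gonality this forces $\degree_o(C^{(2)})\leq \gon(C)$.

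Combining the two bounds gives $\degree_o(C^{(2)})=\gon(C)$, as claimed. The only genuine content of the argument lies in the lower bound, which is supplied entirely by Theorem \ref{theorem INTRO GONALITY OF MOVING CURVES}; the upper bound is elementary, the sole point requiring a word of care being the verification that the tautological family $\{E_p\}$ covers the whole surface and that each fiber is honestly isomorphic to $C$, rather than merely dominated by it.
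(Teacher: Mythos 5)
Your proposal is correct and follows exactly the paper's own argument: the lower bound $\degree_o(C^{(2)})\geq\gon(C)$ is deduced from Theorem \ref{theorem INTRO GONALITY OF MOVING CURVES}, and the upper bound comes from the covering family $\{C_p=C+p\}_{p\in C}$ of copies of $C$, which is precisely the family the paper uses in inequality (\ref{equation DEGREE OF GONALITY}). Nothing essential differs, and your attention to shrinking $T$ to a smooth parameter space is a harmless technical refinement consistent with the paper's own reduction at the start of its proof of Theorem \ref{theorem INTRO GONALITY OF MOVING CURVES}.
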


\smallskip
At the end of this paper, we present further an application of Theorem \ref{theorem INTRO GONALITY OF MOVING CURVES} improving the bounds of \cite{B}
on the nef cone of the second symmetric product of a generic curve $C$ of genus $g$. We would like to recall that the problem of describing the nef
cone $Nef(C^{(2)})_{\mathbb{R}}$ in the N\'eron-Severi space $N^1(C^{(2)})_{\mathbb{R}}$ is reduced to estimate the slope $\tau(C)$ of one of the rays
bounding the two-dimensional convex cone $Nef(C^{(2)})_{\mathbb{R}}$.

In \cite[Theorem 1]{B} are provided some bounds on $\tau(C)$ when the genus of $C$ is ${5\leq g\leq 8}$. The proof of such a result follows the
argument of \cite[Section 4]{R} -~which is based both on the main theorem of the latter paper and on the techniques introduced in \cite{EL}~- and
involves the gonality of moving curves on the second symmetric products.

By following the very same argument and by applying Theorem \ref{theorem INTRO GONALITY OF MOVING CURVES} to this setting, we achieve new bounds on the
nef cone of $C^{(2)}$ when $C$ has genus $6\leq g\leq 8$. Namely,
\begin{theorem}\label{theorem INTRO NEW BOUNDS}
Consider the rational numbers
\begin{equation*}
\tau_6= \frac{32}{13}\,,\quad \tau_7=\frac{77}{29}\quad\textrm{and}\quad \tau_8=\frac{17}{6}\,.
\end{equation*}
Let $C$ be a smooth complex projective curve of genus $6\leq g \leq 8$ and assume that $C$ is very general in the moduli space $\mathcal{M}_g$. Then
$\tau(C)\leq \tau_g$.
\end{theorem}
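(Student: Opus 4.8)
The plan is to reproduce the argument of \cite[Section~4]{R} and \cite[Theorem~1]{B} essentially verbatim, feeding into it the sharp gonality bound for covering families provided by Theorem~\ref{theorem INTRO GONALITY OF MOVING CURVES}. First I would recall the numerical setup. Since $C$ is very general, $N^1(C^{(2)})_{\mathbb{R}}$ is two--dimensional, spanned by the class $x=[C_p]$ of a fibre of the natural covering family $\{C_p\}_{p\in C}$ and by the class $\delta$ with $[\Delta]=2\delta$, with intersection numbers $x^2=1$, $x\cdot\delta=1$ and $\delta^2=1-g$. In this basis $Nef(C^{(2)})_{\mathbb{R}}$ is a two--dimensional convex cone; one of its bounding rays is determined by the condition of non--negative intersection with the diagonal $\Delta$, hence is rational and explicit, while the slope $\tau(C)$ of the remaining ray is the quantity to be estimated, and it always satisfies $\tau(C)\geq\sqrt{g}$ because $Nef(C^{(2)})_{\mathbb{R}}$ is contained in the positive cone.

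The core of the method of \cite{R} and \cite{EL}, as adapted in \cite{B}, is a dictionary between the slope $\tau(C)$ and the gonality of covering families of curves on $C^{(2)}$. Concretely, the Ein--Lazarsfeld argument shows that if $\tau(C)$ exceeds a certain explicit threshold $\beta(d,g)$, decreasing in $d$, then $C^{(2)}$ must carry a covering family of curves whose generic member has gonality strictly smaller than $d$; here the gonality of a curve on $C^{(2)}$ is controlled from above by its numerical class through the covering family $\{C_p\}$ and a $g^1_{\gon(C)}$ on $C$, together with adjunction. Hence the best available lower bound $d_0$ for the gonality of \emph{every} covering family forces $\tau(C)\leq\beta(d_0,g)$, and raising $d_0$ improves the estimate.

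I would then substitute the new input. By Theorem~\ref{theorem INTRO GONALITY OF MOVING CURVES} every covering family of curves on $C^{(2)}$ has generic member of gonality at least $\gon(C)$; for $C$ very general of genus $g$ one has $\gon(C)=\lfloor (g+3)/2\rfloor$, that is $\gon(C)=4,5,5$ for $g=6,7,8$ respectively. This is strictly larger than the value available in \cite{B}, so plugging $d_0=\gon(C)$ into the estimate of \cite[Section~4]{R} and optimizing, genus by genus, over the numerical classes $ax+b\delta$ compatible with the gonality and adjunction constraints produces the sharper thresholds $\tau_6=\tfrac{32}{13}$, $\tau_7=\tfrac{77}{29}$ and $\tau_8=\tfrac{17}{6}$.

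The main obstacle I expect is twofold. First, one must check that the curves arising in the Ein--Lazarsfeld estimate genuinely form covering families with irreducible generic member over a smooth base, so that Theorem~\ref{theorem INTRO GONALITY OF MOVING CURVES} applies and contributes the value $d_0=\gon(C)$, rather than a smaller number coming from degenerate or multiple fibres. Second, the passage from the gonality bound to the explicit fractions is a finite but delicate extremal computation: for each $g\in\{6,7,8\}$ one must identify the optimal class realizing $\tau(C)\leq\beta(\gon(C),g)$ and verify that it indeed improves on \cite{B}. Throughout, the hypothesis that $C$ be very general is used twice: to guarantee that $N^1(C^{(2)})_{\mathbb{R}}$ has rank $2$, so that the cone picture and the single slope $\tau(C)$ make sense, and to fix the value $\gon(C)=\lfloor (g+3)/2\rfloor$ entering the bound.
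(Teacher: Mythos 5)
Your high-level plan --- rerun the argument of \cite[Section 4]{R} and \cite[Theorem 1]{B} with Theorem \ref{theorem INTRO GONALITY OF MOVING CURVES} as the new input --- is indeed the paper's strategy, but your proposal misplaces where that input is applied, and this is a genuine gap. You apply the gonality bound to covering families on $C^{(2)}$ itself; the actual machinery applies it on $D^{(2)}$ for a very general curve $D$ of genus $g-1$. The bridge between the two genera is Ross's degeneration result (Theorem \ref{theorem ROSS}): to get $\tau(C)\leq a/b$ for very general $C$ of genus $g$, one fixes $D$ of genus $g-1$, takes the class $L=(a+b)x-b\frac{\delta}{2}$ on $D^{(2)}$ --- which must already be known to be \emph{nef}, and here the proof bootstraps: nefness for $g=6$ comes from \cite{B}, for $g=7$ from the just-proved case $g=6$, for $g=8$ from $g=7$ --- and proves the Seshadri estimate $\epsilon\left(y;D^{(2)},L\right)\geq b$ at a very general point $y$. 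Covering families enter only there: a very general point avoids the countably many rigid curves, so the Ein--Lazarsfeld argument reduces the Seshadri bound to a nontrivial family $\{y_t\in E_t\}$ with $\mult_{y_t}E_t\geq m\geq 2$, and one plays the bound $E_0^2\geq m(m-1)+\gon(E_0)$ of \cite{KSS} and \cite[Lemma 3]{B} against the Hodge index inequality $E_0^2\leq (L\cdot E_0)^2/L^2\leq (bm-1)^2/L^2$. Working directly on $C^{(2)}$, as you propose, cannot yield nefness of any class: nefness requires non-negativity against \emph{all} irreducible curves, including rigid ones with negative self-intersection, which no statement about covering families controls. Without the degeneration step, the ``dictionary'' you invoke between $\tau(C)$ and gonality of covering families on $C^{(2)}$ simply does not exist.

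The second gap is quantitative and shows that the crude inequality $\gon(E_t)\geq\gon(\cdot)$ is not sufficient. Since the families live on $D^{(2)}$ with $g(D)=g-1$, the relevant gonalities are $\gon(D)=4,4,5$ for $g=6,7,8$, not your values $4,5,5$ (which are gonalities of $C$, the wrong curve). For $g=7$ one has $g(D)=6$, $\gon(D)=4$, $L^2=77^2-29^2\cdot 6=883$, and the inequality $m(m-1)+4\leq (29m-1)^2/883$ \emph{does} have integer solutions (roughly $7\leq m\leq 13$), so no contradiction arises from the basic bound. The proof genuinely needs $\gon(E_0)\geq\gon(D)+1=5$, which comes from the second part of Theorem \ref{theorem INTRO GONALITY OF MOVING CURVES}: equality would force $E_0\cong D$, but $E_0$ is singular at $y_0$, hence not isomorphic to $D$, and the hypotheses $g(D)\geq 6$ and trivial automorphism group hold because $D$ is very general of genus $6$ or $7$. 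Your proposal never invokes this refinement (and could not for your families on $C^{(2)}$, where the relevant comparison curve would be $C$). That your numbers land near the correct ones $4,5,6$ is a coincidence of $\left[\frac{g+3}{2}\right]$ shifting slowly with $g$, not a substitute for the two missing mechanisms: Ross's genus-dropping degeneration with its nefness bootstrap, and the ``equality only for $E_t\cong D$'' clause.
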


\smallskip
In order to prove the most of our results, the main technique is to use holomorphic differentials, following Mumford's method of induced differentials
(cf. \cite[Section 2]{Mu}).
In the spirit of \cite{LP}, we rephrase our settings in terms of correspondences on the product ${Y\times C^{(2)}}$, where $Y$ is an appropriate ruled
surface.
A general $0$-cycle of such a correspondence ${\Gamma\subset Y\times C^{(2)}}$ is a Cayley-Bacharach scheme with respect to the canonical linear series $|K_{C^{(2)}}|$, that is, any holomorphic 2-form vanishing on all but one the points of the $0$-cycle vanishes in the remaining point as well.
The latter property imposes strong conditions on the correspondence $\Gamma$, and the crucial point is to study the restrictions descending to the
second symmetric product and then to the curve $C$.

Another important technique involved in the proofs is monodromy.
In particular, we consider the generically finite dominant map ${\pi_1\colon \Gamma\longrightarrow Y}$ projecting a correspondence $\Gamma$ on the
first factor, and we study the action of the monodromy group of $\pi_1$ on the generic fiber. Finally, an important role is played by Abel's theorem
and some basic facts of Brill-Noether theory.

\smallskip
The plan of the paper is the following. Section \ref{section PRELIMINARIES} concerns preliminaries on symmetric products of curves and monodromy, whereas in Section \ref{section CORRESPONDENCES} we develop the main techniques to menage our problems.
In particular, given a smooth curve $C$ of genus $g$ and its $k$-fold symmetric product, with ${2\leq k\leq g-1}$, we investigate how the existence of a correspondence on $C^{(k)}$ influences the geometry of the curve itself (see Theorem \ref{theorem CORRESPONDENCES ON C(k)} and Corollary \ref{corollary CORRESPONDENCES ON C(k)}).

Dealing with this issue, we come across linear subspaces of $\mathbb{P}^n$ satisfying a condition of Cayley-Bacharach type.
We spend the whole Section \ref{section LINEAR SUBSPACES} to analyze them.
They turn out to enjoy interesting properties both on the dimension of their linear span, and on their configuration in the projective space (cf. Theorem \ref{theorem CB FOR LINEAR SUBSPACES bis}).

Section \ref{section DEGREE OF GONALITY} and Section \ref{section DEGREE OF IRRATIONALITY} are devoted to study the degree of gonality and the degree
of irrationality respectively.

Finally, in the last section we deal with the ample cone on second symmetric products of curves and we prove Theorem \ref{theorem INTRO NEW BOUNDS}.

\smallskip
\begin{notation}
We shall work throughout over the field $\mathbb{C}$ of complex numbers. Given a variety $X$, we say that a property holds for a \emph{general} point
${x\in X}$ if it holds on an open non-empty subset of $X$. Moreover, we say that ${x\in X}$ is a \emph{very general} -~or \emph{generic}~- point if
there exists a countable collection of proper subvarieties of $X$ such that $x$ is not contained in the union of those subvarieties.
By \emph{curve} we mean a complete reduced algebraic curve over the field of complex numbers. When we speak of \emph{smooth} curve, we always
implicitly assume it to be irreducible.
\end{notation}

\bigskip
\section{Preliminaries}\label{section PRELIMINARIES}

\subsection{Definition and first properties}

Let $C$ be a smooth complex projective curve of genus ${g\geq 0}$.
For an integer ${k\geq 1}$, let ${C^k:=C\times\ldots\times C}$ denote its $k$-fold ordinary product and let $S_k$ be the $k$-th symmetric group.
We define the $k$\emph{-fold symmetric product} of $C$ as the quotient
\begin{equation*}
C^{(k)}:=\frac{C^k}{S_k}
\end{equation*}
under the action of $S_k$ permuting the factors of $C^k$.
Hence the quotient map ${\pi\colon C^k\longrightarrow C^{(k)}}$ sending ${(p_1,\ldots,p_k)\in C^k}$ to the point ${p_1+\ldots+p_k\in C^{(k)}}$ has
degree $k!$.
The $k$-fold symmetric product is a smooth projective variety of dimension $k$ (cf. \cite[p. 18]{ACGH}) and it parametrizes the effective divisors on
$C$ of degree $k$ or, equivalently, the unordered $k$-tuples of points of $C$ .

\subsection{Linear series and subordinate loci}

Let $d$ and $r$ be some positive integers. As customary, we denote by ${W^r_d(C)\subset \Pic^d(C)}$ the subvariety parametrizing the complete linear
systems on $C$ of degree $d$ and dimension at least $r$. We recall that the dimension of $W^r_d(C)$ is bounded from below by the Brill-Noether number
${\rho(g,r,d):=g-(r+1)(g-d+r)}$, and if the curve $C$ is very general in the moduli space $\mathcal{M}_g$, then $\dim W^r_d(C)$ equals $\rho(g,r,d)$.

Let $G^r_d(C)$ be the variety of linear series on $C$ of degree $d$ and dimension exactly $r$, whose points are said $g^r_d$'s. We note that the
gonality of $C$ is the minimum $d$ such that $C$ admits a $g^1_d$. Moreover, any complete $g^r_d$ on $C$ can be thought as an element of $W^r_d(C)$.

Given a linear series $\mathcal{D}\in G^r_d(C)$, we define the locus of divisors on $C$ subordinate to $\mathcal{D}$ as
\begin{equation}\label{equation SUBORDINATE LOCUS}
\Gamma_k\left(\mathcal{D}\right):=\left\{P\in C^{(k)} \left| D-P\geq 0 \textrm{ for some }D\in\mathcal{D}\right.\right\}.
\end{equation}
We point out that the linear series $\mathcal{D}$ is not assumed to be base-point-free. Furthermore, the locus $\Gamma_k\left(\mathcal{D}\right)$ is a
subvariety of $C^{(k)}$ and if the dimension of $\mathcal{D}$ is ${r=k-1}$, then $\Gamma_k\left(\mathcal{D}\right)$ is a divisor.

\subsection{Canonical divisor on $C^{(k)}$}

Let ${\phi\colon C\longrightarrow \mathbb{P}^{g-1}}$ be the canonical map of the smooth curve $C$ of genus $g$. For ${1\leq k\leq g-1}$, let us
consider the $k$-fold symmetric product $C^{(k)}$ and the Grassmannian variety $\mathbb{G}(k-1,g-1)$ parametrizing ${(k-1)}$-dimensional planes in
$\mathbb{P}^{g-1}$. As $\phi(C)$ is a non-degenerate curve of $\mathbb{P}^{g-1}$, by General Position Theorem it is well defined the \emph{Gauss map}
\begin{equation}\label{equation GAUSS MAP}
\mathcal{G}_k\colon C^{(k)}\dashrightarrow \mathbb{G}(k-1,g-1)
\end{equation}
sending a point ${p_1+\ldots+p_k\in C^{(k)}}$ to the linear span of the $\phi(p_i)$'s in $\mathbb{P}^{g-1}$.

Let $|K_{C^{(k)}}|$ be the canonical linear system on $C^{(k)}$ and let ${\psi_k\colon C^{(k)}\longrightarrow \mathbb{P}\left(H^{k,0}(C^{(k)})\right)}$
be the induced canonical map. Since
\begin{equation}\label{equation CANONICAL LINEAR SERIES ON C(k)}
H^{k,0}\left(C^{(k)}\right)\cong \bigwedge^k H^{1,0}(C)
\end{equation}
(cf. \cite{Ma}), we have the following commutative diagram
\begin{displaymath}
\xymatrix{ C^{(k)} \ar@{-->}[dr]_{\mathcal{G}_k} \ar@{-->}[rr]^{\psi_k} & & \mathbb{P}^{N}  \\ & \mathbb{G}(k-1,g-1) \ar[ur]_{p} & \\ }
\end{displaymath}
where ${N:={g \choose k}-1}$ and ${p\colon \mathbb{G}(k-1,g-1)\longrightarrow \mathbb{P}^{N}}$ is the Pl\"ucker embedding.

We recall that for any ${L\in \mathbb{G}(g-k-1,g-1)}$, the Schubert cycle ${\sigma_1(L):=\left\{l\in \mathbb{G}(k-1,g-1)\mid l\cap L\neq \emptyset
\right\}}$ maps into a hyperplane section of ${p\left(\mathbb{G}(k-1,g-1)\right)\subset \mathbb{P}^{N}}$. Then it is possible to provide canonical
divisors on the $k$-fold symmetric products of $C$ as follows.
\begin{lemma}\label{lemma CANONICAL DIVISORS ON C(k)}
For any ${L\in \mathbb{G}(g-k-1,g-1)}$, let ${\pi_L\colon \phi(C)\dashrightarrow \mathbb{P}^{k-1}}$ be the projection from the $(g-k-1)$-plane $L$ of
the canonical image of $C$ and let $\mathcal{D}_L$ be the associated linear series on $C$ -~not necessarily base-point-free~- of degree $2g-2$ and
dimension $k-1$. Then the effective divisor $\Gamma_k(\mathcal{D}_L)$ defined in (\ref{equation SUBORDINATE LOCUS}) is a canonical divisor of
$C^{(k)}$, that is $\Gamma_k(\mathcal{D}_L)\in |K_{C^{(k)}}|$.
\end{lemma}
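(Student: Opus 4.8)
The plan is to realize $\Gamma_k(\mathcal{D}_L)$ as the divisor cut on $C^{(k)}$ by the special Schubert hyperplane section $\sigma_1(L)$ through the Gauss map, and then to read off from the commutative diagram that such a divisor is canonical. To begin, I would describe $\mathcal{D}_L$ intrinsically. Writing $\mathbb{P}^{g-1}=\mathbb{P}\big(H^{1,0}(C)^*\big)$, so that hyperplanes correspond to nonzero elements of $H^{1,0}(C)$, the $(g-k-1)$-plane $L$ is the common zero locus of a $k$-dimensional subspace $V_L\subseteq H^{1,0}(C)$, and the hyperplanes containing $L$ are exactly those associated to the nonzero $\omega\in V_L$. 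Hence $\mathcal{D}_L=\{\operatorname{div}(\omega)\mid \omega\in V_L\setminus\{0\}\}$ is the subseries of $|K_C|$ determined by $V_L$; it has degree $\deg K_C=2g-2$ and dimension $\dim V_L-1=k-1$, as asserted, while its base points are precisely the points of $\phi^{-1}\big(L\cap\phi(C)\big)$, so it need not be base-point-free. By the remark following \eqref{equation SUBORDINATE LOCUS}, the locus $\Gamma_k(\mathcal{D}_L)$ is therefore an effective divisor on $C^{(k)}$.

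The conceptual heart of the argument is an incidence computation identifying $\Gamma_k(\mathcal{D}_L)$ with $\mathcal{G}_k^{-1}(\sigma_1(L))$. Let $P=p_1+\ldots+p_k$ be a general point, so that by the General Position Theorem the points $\phi(p_1),\ldots,\phi(p_k)$ are in general position and $\mathcal{G}_k(P)=\langle\phi(p_1),\ldots,\phi(p_k)\rangle$ is a genuine $(k-1)$-plane. Then $P\in\Gamma_k(\mathcal{D}_L)$ means that some $\omega\in V_L$ vanishes at every $p_i$, that is, that there exists a hyperplane containing both $L$ and $\mathcal{G}_k(P)$. Since $\dim L+\dim\mathcal{G}_k(P)=(g-k-1)+(k-1)=g-2$, such a hyperplane exists if and only if the two planes fail to span $\mathbb{P}^{g-1}$, which by the projective dimension formula happens exactly when $\mathcal{G}_k(P)\cap L\neq\emptyset$, i.e. when $\mathcal{G}_k(P)\in\sigma_1(L)$. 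This shows that the effective divisors $\Gamma_k(\mathcal{D}_L)$ and $\mathcal{G}_k^{-1}(\sigma_1(L))$ have the same support along a dense open subset of $C^{(k)}$.

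To conclude I would invoke the commutative diagram, where $\psi_k=p\circ\mathcal{G}_k$, together with the fact recalled in the excerpt that the special Schubert cycle is a hyperplane section, i.e. $\sigma_1(L)=p^{-1}(\mathcal{H}_L)$ for a suitable hyperplane $\mathcal{H}_L\subset\mathbb{P}^N$ of the Pl\"ucker-embedded Grassmannian. Then $\mathcal{G}_k^{-1}(\sigma_1(L))=\psi_k^{-1}(\mathcal{H}_L)$, and since $\psi_k$ is the map attached to the complete system $|K_{C^{(k)}}|$ via \eqref{equation CANONICAL LINEAR SERIES ON C(k)}, this pullback is a canonical divisor; hence $\Gamma_k(\mathcal{D}_L)$ coincides with a canonical divisor along a dense open set.

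I expect the genuine difficulty to lie not in this generic incidence count but in promoting the set-theoretic coincidence to a true equality of divisors with the correct multiplicities, especially when $L$ meets $\phi(C)$ so that $\mathcal{D}_L$ acquires base points and $\mathcal{G}_k,\psi_k$ have indeterminacy. To settle this cleanly I would bypass the rational maps and argue directly with Mumford's induced differentials: under the isomorphism $H^{k,0}(C^{(k)})\cong\bigwedge^k H^{1,0}(C)$ of \eqref{equation CANONICAL LINEAR SERIES ON C(k)}, a basis $\eta_1,\ldots,\eta_k$ of $V_L$ produces the global section $s:=\eta_1\wedge\ldots\wedge\eta_k$ of $K_{C^{(k)}}$, whose local expression at $p_1+\ldots+p_k$ is, up to a local frame, the determinant $\det\big(\eta_j(p_i)\big)_{i,j}$ of the induced differentials. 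This determinant vanishes at $P$ precisely when its columns are linearly dependent, that is, when some $\omega=\sum_j c_j\eta_j\in V_L$ vanishes at every $p_i$, which is exactly the condition $P\in\Gamma_k(\mathcal{D}_L)$. Thus $\operatorname{div}(s)=\Gamma_k(\mathcal{D}_L)$ as divisors, with all multiplicities and base contributions accounted for, and the lemma follows since $s\in H^0\big(K_{C^{(k)}}\big)$.
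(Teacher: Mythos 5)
Your proposal is correct and follows exactly the route the paper intends: the paper states this lemma without a separate proof, treating it as a consequence of the commutative diagram $\psi_k=p\circ\mathcal{G}_k$, the fact that the Schubert cycle $\sigma_1(L)$ is a Pl\"ucker hyperplane section, and the incidence identification $\Gamma_k(\mathcal{D}_L)=\mathcal{G}_k^{-1}(\sigma_1(L))$ (which appears as the remark immediately following the lemma). Your closing argument, realizing the divisor as the zero locus of the section $\eta_1\wedge\ldots\wedge\eta_k$ under Macdonald's isomorphism $H^{k,0}(C^{(k)})\cong\bigwedge^k H^{1,0}(C)$, is precisely the rigorous content behind that diagram, so you have if anything supplied more detail (the multiplicity and base-point bookkeeping) than the paper itself does.
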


In particular, a generic point ${P=p_1+\ldots+p_k\in C^{(k)}}$ lies on the divisor $\Gamma_k(\mathcal{D}_L)$ if and only if the linear span of the
$\phi(p_j)$'s in $\mathbb{P}^{g-1}$ is a point of the Schubert cycle $\sigma_1(L)$, that is $\mathcal{G}_k(P)$ intersects $L$.

\subsection{Monodromy}

To conclude this section, we follow \cite{H} to recall some basic facts on the monodromy of a generically finite dominant morphism ${F\colon
X\longrightarrow Y}$ of degree $d$ between irreducible complex algebraic varieties of the same dimension.

Let ${U\subset Y}$ be a suitable Zariski open subset of $X$ such that the restriction ${F^{-1}(U)\longrightarrow U}$ is an unbranched covering of
degree $d$.
Given a generic point ${y\in U}$, by lifting loops at $y$ to $F^{-1}(U)$, we may define the monodromy representation ${\rho\colon \pi_1(U,y)
\longrightarrow Aut\left(F^{-1}(y)\right)\cong S_d}$ and we define the \emph{monodromy group} $M(F)$ of $F$ to be the image of the latter
homomorphism.

Equivalently, let $L$ be the normalization of the algebraic field extension $K(X)/K(Y)$ of degree $d$, and let $Gal(L/K(Y))$ be the \emph{Galois group}
of $L/K(Y)$, that is the group of the automorphisms of the field $L$ fixing every element of $K(Y)$. Then the monodromy group $M(F)$ and the Galois
group $Gal(L/K(Y))$ are isomorphic (see \cite[p. 689]{H}). In particular, this implies that the monodromy group of $F$ is independent of the choice of
the Zariski open set $U$. Moreover, $F$ should not be necessarily a morphism, but it suffices being a dominant rational map.

The simple fact we want to point out is that the action of $M(F)$ on the fiber $F^{-1}(y)$ is transitive, because of the connectedness of $X$.
Roughly speaking, this means that the points of the fiber over a generic point are undistinguishable.
Namely, suppose that a point $x_i\in F^{-1}(y)$ enjoys some special property such that as we vary continuously the point $y$ on a suitable open subset
$U\subset Y$, that special property is preserved as we follow the correspondent point of the fiber.
Then for any loop $\gamma\in \pi_1(U,y)$, we have that the ending point $x_j\in F^{-1}(y)$ of the unique lifting $\widetilde{\gamma}$ of $\gamma$
starting from $x_i$ must enjoy the same property. Hence the transitivity of the action assures that there is no way to distinguish a point of the fiber
over $y\in Y$ from another for enjoying a property as above.

\bigskip
\section{Linear subspaces of $\mathbb{P}^n$ in special position}\label{section LINEAR SUBSPACES}

In this section we deal with sets of linear subspaces of the $n$-dimensional projective space satisfying a condition of Cayley-Bacharach type. In
particular, we shall provide a bound on the dimension of their linear span in $\mathbb{P}^n$, and we shall present some examples proving the sharpness of the bound. The reasons for studying these particular linear spaces shall be clear in the next section, when we shall relate them to correspondences on symmetric products of curves.

\smallskip
To start we recall the following definition (cf. \cite{GH1}).
\begin{definition}\label{definition CAYLEY-BACHARACH CONDITION}
Let $\mathcal{D}$ be a complete linear system on a projective variety $X$. We say that a $0$-cycle ${P_1+\ldots+P_d\subset X^{(d)}}$ \emph{satisfies the Cayley-Bacharach condition with respect to} $\mathcal{D}$ if for every ${i=1,\ldots,d}$ and for any effective divisor ${D\in \mathcal{D}}$ passing through ${P_1,\ldots,\widehat{P}_i,\ldots,P_d}$, we have $P_i\in D$ as well.
\end{definition}

\smallskip
Let $n$ and $k$ be two integers with ${n\geq k \geq 2}$, and let $\mathbb{G}(k-1,n)$ be the Grassmann variety of $(k-1)$-planes in $\mathbb{P}^n$. For
an integer ${d\geq 2}$, let us consider a set ${\left\{l_1,\ldots,l_d\right\}\subset \mathbb{G}(k-1,n)}$ and suppose that the associated $0$-cycle
${l_1+\ldots+l_d}$ satisfies the Cayley-Bacharach condition with respect to the complete linear series $|\mathcal{O}_{\mathbb{G}(k-1,n)}(1)|$.

We recall that for any ${L\in \mathbb{G}(n-k,n)}$, the Schubert cycle ${\sigma_1(L):=\{l\in\mathbb{G}(k-1,n)|l\cap L\neq\emptyset\}}$ is an effective
divisor of $|\mathcal{O}_{\mathbb{G}(k-1,n)}(1)|$. Thus the set ${\left\{l_1,\ldots,l_d\right\}}$ is such that for every ${i=1,\ldots,d}$ and for any
${L\in\mathbb{G}(n-k,n)}$ with ${l_1,\ldots,\widehat{l}_i,\ldots,l_d\in \sigma_1(L)}$, we have ${l_i\in \sigma_1(L)}$ as well. Then it makes sense to
give the following definition expressing a condition of Cayley-Bacharach type for $(k-1)$-dimensional linear subspaces of $\mathbb{P}^{n}$.
\begin{definition}\label{definition CB FOR LINEAR SUBSPACES}
We say that the $(k-1)$-planes ${l_1,\ldots,l_d\subset \mathbb{P}^{n}}$ are in \emph{special position with respect to $(n-k)$-planes} if for every
${i=1,\ldots,d}$ and for any $(n-k)$-plane ${L\subset \mathbb{P}^n}$ intersecting ${l_1,\ldots,\widehat{l}_i,\ldots,l_d}$, we have ${l_i\cap L\neq
\emptyset}$.
\end{definition}
We note that the $(k-1)$-planes in the definition are not assumed to be distinct. In particular, it is immediate to check that two $(k-1)$-planes
${l_1,l_2\subset \mathbb{P}^n}$ are in special position if and only if they coincide.

The main result of this section is the following.
\begin{theorem}\label{theorem CB FOR LINEAR SUBSPACES bis}
Let ${2\leq k,d \leq n}$ be some integers and suppose that the $(k-1)$-planes ${l_1,\ldots,l_d\subset \mathbb{P}^n}$ are in special position with
respect to $(n-k)$-planes of $\mathbb{P}^n$. Then the dimension of their linear span ${S:=Span(l_1,\ldots,l_d)}$ in $\mathbb{P}^n$ is ${s\leq
\left[\frac{kd}{2}\right]-1}$.
\end{theorem}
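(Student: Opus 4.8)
The plan is to prove the inequality by induction on $k$, after two reductions. First I would pass to the span: if $M$ is an $(s-k)$-plane of $S:=Span(l_1,\dots,l_d)$, then joining $M$ with a general $(n-s-1)$-plane $N$ disjoint from $S$ produces an $(n-k)$-plane $L$ of $\mathbb{P}^n$ with $L\cap S=M$, whence $L\cap l_j=M\cap l_j$ for every $j$. Consequently the $l_i$ are in special position with respect to $(s-k)$-planes of $S$ as well, and I may assume $n=s$; in this case the assertion reads $kd\ge 2(s+1)$. I would also record the contrapositive of Definition \ref{definition CB FOR LINEAR SUBSPACES}: the $l_i$ are in special position exactly when no $(n-k)$-plane of $\mathbb{P}^n$ misses precisely one of them.

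For the inductive step ($k\ge 3$) I would cut with a general hyperplane $H\subset\mathbb{P}^n$ and set $l_i':=l_i\cap H$, a $(k-2)$-plane. If an $(n-k)$-plane $M\subseteq H$ meets $l_j'$ for all $j\ne i$, then---reading $M$ as an $(n-k)$-plane of $\mathbb{P}^n$ lying in $H$---one has $M\cap l_j=M\cap l_j'$, so $M$ meets every $l_j$ with $j\ne i$; special position in $\mathbb{P}^n$ then gives $M\cap l_i\ne\emptyset$, and again $M\cap l_i=M\cap l_i'$. Hence the $d$ planes $l_i'$ are in special position with respect to $(n-k)$-planes of $H\cong\mathbb{P}^{n-1}$. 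Since $\bigcup_i l_i$ is non-degenerate in $S$ of positive dimension ($k\ge 2$), its general hyperplane section is non-degenerate in $S\cap H$, so $Span(l_1',\dots,l_d')=S\cap H$ has dimension $s-1$. The inductive hypothesis for $k-1$ yields $s-1\le\lfloor(k-1)d/2\rfloor-1$, and therefore $s\le\lfloor(k-1)d/2\rfloor\le\lfloor kd/2\rfloor-1$, the last step because $\lfloor kd/2\rfloor-\lfloor(k-1)d/2\rfloor\ge1$ for all $d\ge 2$.

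This leaves the base case $k=2$: I must show that $d$ lines in special position with respect to $(s-2)$-planes span at most $\mathbb{P}^{d-1}$, equivalently $d\ge s+1$ once they span $\mathbb{P}^s$. Here I would argue by contradiction, assuming $d\le s$ and trying to build an $(s-2)$-plane meeting all lines but one. Fixing an index $i$ and points $p_j\in l_j$ for $j\ne i$, their span $P$ has dimension at most $d-2\le s-2$; if moreover $P\cap l_i=\emptyset$, a dimension count ($(s-2)+1<s$) shows that a general $(s-2)$-plane through $P$ avoids $l_i$ while still containing each $p_j\in l_j$. Such a plane misses exactly $l_i$, against the reformulation above.

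The hard part will be precisely to secure the condition $P\cap l_i=\emptyset$. In genuinely special configurations---the extremal ones being $d/2$ pairs of coincident lines for $d$ even, or concurrent configurations in low dimension---the spans $P$ may meet $l_i$ for every choice of the $p_j$ and every excluded index $i$, so the naive general-position construction fails. I expect to overcome this by letting the points $p_j$ vary and invoking the connectedness/transitivity of monodromy recalled in Section \ref{section PRELIMINARIES} to spread one favorable configuration to the general one, together with an inductive step that strips off a redundant pair of lines while preserving special position of the remaining $d-2$ (the even and odd cases of $d$ being handled separately, matching the two families of extremal examples). Guaranteeing that such a pair exists and that its removal does not destroy the special position of the rest is the delicate point on which the base case---and hence the theorem---turns.
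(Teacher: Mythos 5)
Your proposal has genuine gaps in both halves of the induction. The fatal one is the base case $k=2$, which you acknowledge you cannot complete: your strategy of choosing points $p_j\in l_j$ for $j\neq i$ and passing an $(s-2)$-plane through their span $P$ while missing $l_i$ requires $P\cap l_i=\emptyset$, and exactly in the configurations that matter (pairs of coincident lines, concurrent lines) no such choice exists; the monodromy/pair-stripping repair you sketch is never carried out. Since your induction on $k$ bottoms out at $k=2$ --- which is moreover the only case the paper actually uses in its applications to $C^{(2)}$ --- the argument is incomplete. The second gap is in the inductive step itself: you need $Span(l_1',\ldots,l_d')=S\cap H$, i.e.\ that a general hyperplane section drops the span by exactly one, and you justify this by the classical fact that a general hyperplane section of a nondegenerate variety of positive dimension is nondegenerate. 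That fact requires irreducibility, and for a union of linear spaces it is false: two disjoint planes in $\mathbb{P}^5$ span $\mathbb{P}^5$, yet their sections by a general hyperplane $H$ are two disjoint lines spanning only a $\mathbb{P}^3\subsetneq H\cong\mathbb{P}^4$. (Disjoint planes are not in special position, but the example shows your cited justification proves nothing; establishing the span-drop for special-position configurations would require structural information that is essentially the theorem itself.) The direction of the inequality is the crux: induction gives an \emph{upper} bound on $\dim Span(l_i')$, so you need the \emph{lower} bound $\dim Span(l_i')\geq s-1$, and that is precisely what fails.

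It is instructive to compare with the paper's proof, which inducts on $d$ rather than on $k$ and thereby avoids both difficulties. After disposing of the case in which every $l_i$ coincides with some other $l_j$ (then there are at most $\left[\frac{d}{2}\right]$ distinct planes, so the span has dimension at most $k\left[\frac{d}{2}\right]-1\leq\left[\frac{kd}{2}\right]-1$), one picks a point $p\in l_1$ lying on no other $l_i$ and projects from $p$: the images $\lambda_2,\ldots,\lambda_d$ are $(k-1)$-planes in special position in $\mathbb{P}^{n-1}$, induction bounds their span $\Sigma$, and $R:=Span(\Sigma,p)$ contains $l_2,\ldots,l_d$, hence also $l_1$ by the key Lemma \ref{lemma ALL BUT ONE} (a linear space containing all but one of the $l_i$ contains the remaining one). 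Only upper bounds ever enter, since the containment $Span(l_1,\ldots,l_d)\subseteq R$ is forced by that lemma rather than by any claim about spans of sections, and the base case $d=2$ is trivial because two planes in special position coincide. If you want to salvage your hyperplane-section scheme, you would need both a proof of the $k=2$ case from scratch and a genuine argument (not general position) for the span-drop claim; the projection-plus-Lemma route requires neither.
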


In order to prove this result, let us state the following preliminary lemma.

\begin{lemma}\label{lemma ALL BUT ONE}
Under the assumption of Theorem \ref{theorem CB FOR LINEAR SUBSPACES bis}, suppose further that there exists a linear space ${R\subset \mathbb{P}^n}$
containing ${l_1,\ldots,\widehat{l}_j,\ldots,l_d}$. Then ${l_j\subset R}$ as well.
\begin{proof}
Let $r$ denote the dimension of $R$. If $r=n$ the statement is trivially true, then let us assume ${r<n}$. As ${k-1\leq r}$ we have that ${0\leq
r-k+1\leq n-k}$ and we can consider a $(r-k+1)$-plane ${T\subset R}$. Then $T$ intersects each of the $(k-1)$-planes
${l_1,\ldots,\widehat{l}_j,\ldots,l_d}$. Therefore by special position property, any $(n-k)$-plane $L$ containing $T$ must intersect $l_j$, thus
${l_j\cap T\neq\emptyset}$. Therefore $l_j$ meets every $(r-k+1)$-plane ${T\subset R}$ and hence $l_j\subset R$.
\end{proof}
\end{lemma}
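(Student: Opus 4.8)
The plan is to reduce the containment $l_j \subset R$ to an intersection condition that can be fed into the special position hypothesis. Write $r := \dim R$; the case $r = n$ is immediate, so assume $r < n$. Since $d \geq 2$, the space $R$ contains at least one of the $(k-1)$-planes $l_i$, whence $r \geq k-1$ and $(r-k+1)$-planes $T \subset R$ do exist. The geometric fact I would isolate first is the following criterion: a $(k-1)$-plane $l_j$ is contained in $R \cong \mathbb{P}^r$ if and only if $l_j$ meets every $(r-k+1)$-plane $T \subset R$. The forward direction is the usual dimension count inside $\mathbb{P}^r$, where $(k-1)+(r-k+1) = r$ forces $l_j \cap T \neq \emptyset$. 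For the converse I would argue contrapositively: if $l_j \not\subset R$ then $M := l_j \cap R$ is a proper subspace of $l_j$, so $\dim M \leq k-2$; since $\dim M + (r-k+1) \leq r-1 < r$, a general $(r-k+1)$-plane $T \subset R$ avoids $M$, and because $T \subset R$ this gives $l_j \cap T = M \cap T = \emptyset$. Thus it suffices to show that $l_j$ meets every $(r-k+1)$-plane contained in $R$.

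Fix such a $T$. Inside $R \cong \mathbb{P}^r$ the planes $l_i$ (for $i \neq j$) have dimension $k-1$ and $T$ has dimension $r-k+1$, and again $(k-1)+(r-k+1)=r$, so $T$ meets each $l_i$ with $i \neq j$. Now extend $T$ to an $(n-k)$-plane $L \subset \mathbb{P}^n$, which is possible because $\dim T = r-k+1 \leq n-k$. Every such $L$ contains $T$ and therefore still meets all the $l_i$ with $i \neq j$, so the special position hypothesis, applied with index $j$, forces $L \cap l_j \neq \emptyset$.

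The step I expect to carry the real weight is passing from this last statement — every $(n-k)$-plane through $T$ meets $l_j$ — to the sharper conclusion $T \cap l_j \neq \emptyset$, which the plain count above does not yet give. I would argue by contradiction, assuming $T \cap l_j = \emptyset$. If $r = n-1$ then $T$ already has dimension $n-k$, so $L = T$ is itself an admissible plane and meets $l_j$, a contradiction. If $r < n-1$ I would build an $(n-k)$-plane through $T$ disjoint from $l_j$ by adding points one at a time: given an intermediate plane $T'$ disjoint from $l_j$ with $\dim T' < n-k$, the join $\langle T', l_j\rangle$ has dimension $\dim T' + k \leq n-1 < n$, so there is a point $p \notin \langle T', l_j\rangle$; then $p \notin T'$ and $\langle T', p\rangle$ is again disjoint from $l_j$, since any point of $\langle T', p\rangle \cap l_j$ would force $p \in \langle T', l_j\rangle$. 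Iterating until the dimension reaches $n-k$ produces an $(n-k)$-plane $L \supset T$ with $L \cap l_j = \emptyset$, contradicting the previous paragraph. Hence $T \cap l_j \neq \emptyset$ for every $(r-k+1)$-plane $T \subset R$, and the criterion of the first paragraph yields $l_j \subset R$, as desired.
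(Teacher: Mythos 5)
Your proof is correct and follows essentially the same route as the paper: reduce $l_j\subset R$ to the claim that $l_j$ meets every $(r-k+1)$-plane $T\subset R$, note $T$ meets each $l_i$ ($i\neq j$) by a dimension count in $R$, and invoke the special position property for $(n-k)$-planes containing $T$. The only difference is that you carefully justify two steps the paper leaves implicit — the equivalence ``$l_j\subset R$ iff $l_j$ meets every $(r-k+1)$-plane in $R$'' and the existence, when $T\cap l_j=\emptyset$, of an $(n-k)$-plane through $T$ avoiding $l_j$ — and both of your justifications are sound.
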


\begin{proof}[\textit{Proof of Theorem \ref{theorem CB FOR LINEAR SUBSPACES bis}}]
Let us fix ${2\leq k,d\leq n}$. Notice that if ${n\leq \left[\frac{kd}{2}\right]-1}$, the statement is trivially proved. Hence we assume hereafter
${n\geq \left[\frac{kd}{2}\right]}$. We proceed by induction on the number $d$ of $(k-1)$-planes.

\smallskip
Let ${l_1,l_2\subset \mathbb{P}^n}$ be two $(k-1)$-dimensional planes in special position with respect to $(n-k)$-planes. Then we set ${R:=l_1}$ and
Lemma \ref{lemma ALL BUT ONE} implies ${l_2\subset R}$. Hence ${R=l_1=l_2}$ and ${[\frac{kd}{2}]-1=k-1=\dim R}$. Thus the statement is proved when
${d=2}$.

\smallskip
By induction, suppose that the assertion holds for any ${2\leq h\leq d-1}$ and for any $h$-tuple of $(k-1)$-dimensional linear subspaces of
$\mathbb{P}^m$ in special position with respect to $(m-k)$-planes, with ${m\geq h}$.

Now, let ${l_1,\ldots,l_d\subset \mathbb{P}^n}$ be $(k-1)$-planes in special position with respect to $(n-k)$-planes.

We first consider the case where is not possible to choose one of the $l_i$'s such that it does not coincide with any of the others. In this situation,
the number of distinct $l_i$'s is at most ${\left[\frac{d}{2}\right]}$. Thus the dimension of their linear span in $\mathbb{P}^n$ is at most
${k\left[\frac{d}{2}\right]-1\leq \left[\frac{kd}{2}\right]-1}$ as claimed.

Then we consider the $(k-1)$-plane $l_1$ and we suppose - without loss of generality - that it does not coincide with any of the others $l_i$'s.
Therefore is possible to choose a point ${p\in l_1}$ such that ${p\not\in l_i}$ for any ${i=2,\ldots,d}$. Moreover, let ${H\subset \mathbb{P}^n}$ be an
hyperplane not containing $p$ and consider the projection
\begin{displaymath}
\begin{array}{cccl}
\pi_{p}\colon & \mathbb{P}^n-\{p\} & \longrightarrow & H \cong \mathbb{P}^{n-1}\\
 & q & \longmapsto & \overline{pq}\cap H\,.
\end{array}
\end{displaymath}

For ${2\leq i\leq d}$, let ${\lambda_i:=\pi_p\left(l_i\right)\subset H}$ be the image of $l_i$ on $H$. We claim that the $(k-1)$-planes
${\lambda_2,\ldots,\lambda_d\subset H}$ are in special position with respect to $(n-1-k)$-planes of ${H\cong \mathbb{P}^{n-1}}$. To see this fact, let
${j\in\{2,\ldots,d\}}$ and let ${\Lambda\subset H}$ be a $(n-1-k)$ plane intersecting ${\lambda_2,\ldots,\widehat{\lambda}_j,\ldots,\lambda_d}$. Since
${p\in l_1}$, it follows that the $(n-k)$-plane ${L:=Span(\Lambda,p)\subset \mathbb{P}^n}$ intersects ${l_1,\ldots,\widehat{l}_j,\ldots,l_d}$. As they
are in special position with respect to $(n-k)$-planes, we have that $L$ intersects $l_j$ as well. Then, given a point ${q_j\in L\cap l_j}$, we have
that ${\pi_p(q_j)\in \Lambda}$. In particular, $\Lambda$ meets $\lambda_j$ at $\pi_p(q_j)$ and hence ${\lambda_2,\ldots,\lambda_d\subset H}$ are in
special position with respect to $(n-1-k)$-planes of the hyperplane ${H\cong \mathbb{P}^{n-1}}$.

By induction, the linear span ${\Sigma:=Span(\lambda_2,\ldots,\lambda_d)\subset H}$ has dimension ${\dim \Sigma\leq \left[\frac{k(d-1)}{2}\right]-1}$.
Then the linear space ${R:=Span (\lambda_2,\ldots,\lambda_d,p)\subset \mathbb{P}^n}$ has dimension ${\dim R=\dim\Sigma+1\leq
\left[\frac{k(d-1)}{2}\right]\leq \Big[\frac{kd}{2}\Big]-1}$ for any ${k\geq 2}$. Notice that $R$ contains ${l_2,\ldots,l_d}$. Hence ${l_1\subset R}$
as well by Lemma \ref{lemma ALL BUT ONE}. Thus $R$ contains the linear span in $\mathbb{P}^n$ of all the $l_i$'s and the assertion follows.
\end{proof}

We would like to note that the assumption ${k\geq 2}$ in Theorem \ref{theorem CB FOR LINEAR SUBSPACES bis} is necessary. For instance, let ${k=1}$ and
consider three collinear points in $\mathbb{P}^n$. Clearly, they are in special position with respect to $(n-1)$-planes and ${[\frac{kd}{2}]-1=0}$, but
they span a line.

On the other hand, when ${k=2}$ the theorem assures that if ${\{l_1,\ldots,l_d\}}$ is a set of $d$ lines in special position with respect to
$(n-2)$-planes of $\mathbb{P}^n$, then their linear span has dimension lower than $d$. The following examples concern the configuration of such lines
in $\mathbb{P}^{d-1}$ and show that the latter bound is sharp.

\begin{example}\label{example THREE LINES IN Pn}
Let us consider three distinct lines ${l_1,l_2,l_3}$ in $\mathbb{P}^n$. Then they are in special position with respect to $(n-2)$-planes if and only if
they lie on a plane ${\pi\subset \mathbb{P}^n}$ and they meet at a point ${p\in\pi}$.

Indeed, suppose that ${l_1,l_2,l_3}$ are in special position with respect to $(n-2)$-planes. Therefore they must lie on a plane ${\pi\subset
\mathbb{P}^{n}}$ by Theorem \ref{theorem CB FOR LINEAR SUBSPACES bis}. Then consider the point ${p=l_2\cap l_3}$ and let $L$ be a $(n-2)$-plane such
that ${L\cap \pi=\{p\}}$. Thus $L$ must intersect also $l_1$ by special position property, and hence ${p\in l_1}$.

On the other hand, it is immediate to check that if three distinct lines of $\mathbb{P}^n$ lie on the same plane and meet at a point, then any
$(n-2)$-plane intersecting two of them intersects the last one as well.
\end{example}

\begin{example}
Let ${l_1,\ldots,l_4\subset \mathbb{P}^n}$ be four skew lines. Then they are in special position with respect to $(n-2)$-planes if and only if they lie
on the same ruling of a quadric surface ${Q\subset \mathbb{P}^3}$. In particular, the $l_i$'s span the whole $\mathbb{P}^3$.

If the $l_i$'s enjoy special position property, they span a linear subspace ${S\cong \mathbb{P}^3}$ by Theorem \ref{theorem CB FOR LINEAR SUBSPACES
bis}. Let ${Q\subset S}$ be the quadric defined as the union of the lines intersecting $l_1$, $l_2$ and $l_3$. By special position property, any line
${L\subset S}$ intersecting ${l_1,l_2,l_3}$ must meet $l_4$ too. Hence ${l_4\subset Q}$ as well and it lies on the same ruling of the other $l_i$'s.

To see the converse, it suffices to observe that any quadric surface ${Q\subset \mathbb{P}^3}$ is covered by two families of skew lines, $\mathcal{L}$
and $\mathcal{L}'$, such that any two lines ${l\in \mathcal{L}}$ and ${l'\in \mathcal{L}'}$ meet at a point (see e.g. \cite[p. 478]{GH}).
\end{example}

\begin{example}
In general, if ${l_1,\ldots,l_d\subset \mathbb{P}^{d-1}}$ are skew lines lying on a non-degenerate surface ${Q\subset \mathbb{P}^{d-1}}$ of minimal
degree, then they are in special position with respect to $(d-3)$-planes.

Under these assumptions, $Q$ is a ruled surface of degree ${d-2}$ (cf \cite[p. 522]{GH}). If ${L\subset \mathbb{P}^{d-1}}$ is a $(d-3)$-plane
intersecting ${l_1,\ldots,l_{d-1}}$, then ${L\cap \Sigma}$ is a curve $C$ of degree ${\deg C\leq d-2}$. In particular, $C$ does not lies on the ruling
of $Q$ and hence it must intersect $l_d$ too.
\end{example}

\bigskip
\section{Correspondences with null trace on symmetric products of curves}\label{section CORRESPONDENCES}

In order to deal with correspondences with null trace on symmetric products of curves, we would like to recall the basic properties of Mumford's
induced differentials (see \cite[Section 2]{Mu}) and their applications to the study of correspondences (cf. \cite[Section 2]{LP}).
Then we shall turn to symmetric products of curves and we shall prove the main result of this section, which gives a geometric interpretation of the
existence correspondences with null trace on these varieties.

\smallskip
Let $X$ and $Y$ be two projective varieties of dimension $n$, with $X$ smooth and $Y$ integral.
\begin{definition}\label{definition CORRESPONDENCE}
A \emph{correspondence of degree} $d$ \emph{on} $Y\times X$ is a reduced $n$-dimensional variety ${\Gamma\subset Y\times X}$ such that the projections
${\pi_1\colon \Gamma\longrightarrow Y}$, ${\pi_2\colon \Gamma\longrightarrow X}$ are generically finite dominant morphisms and ${deg\,\pi_1=d}$.
Moreover, if ${deg\,\pi_2=d'}$ we say that $\Gamma$ is a $(d,d')$-\emph{correspondence}.
\end{definition}
So, let ${\Gamma\subset Y\times X}$ be a correspondence of degree $d$. Let ${X^d=X\times\ldots\times X}$ be the $d$-fold ordinary product of $X$ and
let ${p_i\colon X^d\longrightarrow X}$ be the $i$-th projection map, with ${i=1,\ldots,d}$. Let us consider the $d$-fold symmetric product
${X^{(d)}=X^d/S_d}$ of $X$ together with the quotient map ${\pi\colon X^d\longrightarrow X^{(d)}}$. Then we define the set ${U:=\{y\in
Y_{reg}\,|\,\dim\pi_1^{-1}(y)=0\}}$ and the morphism ${\gamma\colon U \longrightarrow X^{(d)}}$ given by ${\gamma(y):= P_1+\ldots +P_d}$, where
${\pi_1^{-1}(y)=\{(y,P_i)\,|\,i=1,\ldots,d\}}$.

By using Mumford's induced differentials, we want to define the trace map of $\gamma$. To this aim, we consider a holomorphic $n$-form ${\omega\in
H^{n,0}(X)}$ and the $(n,0)$-form
\begin{equation*}
\omega^{(d)}:=\sum_{i=1}^d p_i^*\omega \in H^{n,0}(X^{d}),
\end{equation*}
which is invariant under the action of $S_d$. Thus for any smooth variety $W$ and for any morphism ${f\colon W\longrightarrow X^{(d)}}$, there exists a
canonically induced $(n,0)$-form $\omega_{f}$ on $W$ (cf. \cite[Section 2]{Mu}). In particular, we define the Mumford's \emph{trace map} of $\gamma$
as
\begin{displaymath}
\begin{array}{ccccl}
Tr_{\gamma}\colon & H^{n,0}(X) & \longrightarrow & H^{n,0}(U) \\
 & \omega & \longmapsto & \omega_{\gamma}
\end{array}\, .
\end{displaymath}

Another way to define the trace map of $\gamma$ is the following. Let us consider the sets ${V:=\{y\in U\,|\pi_1^{-1}(y)\textrm{ has } d \textrm{
distinct points}\}}$ and
\begin{equation*}
X_0^{(d)}:=\pi\left(X^d-\bigcup_{i,j}\Delta_{i,j}\right),
\end{equation*}
where $\Delta_{i,j}$ is the $(i,j)$-diagonal of $X^{d}$, with ${i,j=1,\ldots, d}$ and ${i\neq j}$. Moreover, let us define the map
\begin{displaymath}
\begin{array}{cccc}
\delta_d\colon & H^{n,0}(X) & \longrightarrow & H^{n,0}(X_0^{(d)}) \\
 & \omega & \longmapsto & \pi_*(\omega^{(d)})
\end{array}\, ,
\end{displaymath}
i.e. $\omega^{(d)}$ is thought as a $(n,0)$-form on $X_0^{(d)}$. Then ${Im\,\gamma_{|V}\subset X_0^{(d)}}$ and the Mumford's trace map of $\gamma$
turns out to be ${Tr_{\gamma} = \gamma^*_{|V}\circ \delta_d}$ (cf. \cite[Proposition 2.1]{LP}).

\smallskip
The following result shows that the property of having null trace imposes strong conditions on the correspondence ${\Gamma\subset Y\times X}$. We would
like to note that in \cite[Proposition 2.2]{LP} it is presented in the case of correspondences on surfaces, but it is still true when $X$ and $Y$ are
$n$-dimensional varieties and the proof follows the very same argument.
\begin{proposition}\label{proposition LOPEZ PIROLA}
Let $X$ and $Y$ be two projective varieties of dimension $n$, with $X$ smooth and $Y$ integral. Let $\Gamma$ be a correspondence of degree $d$ on
${Y\times X}$ with null trace. Let ${y\in Y_{reg}}$ such that ${\dim\pi_1^{-1}(y)=0}$ and let ${\pi_1^{-1}(y)=\{(y,P_i)\in\Gamma\,|\,i=1,\ldots,d\}}$
be its fiber. Then the $0$-cycle ${P_1+\ldots+P_d}$ satisfies the Cayley-Bacharach condition with respect to the canonical linear series $|K_X|$, that
is for every ${i=1,\ldots,d}$ and for any effective canonical divisor $K_X$ containing ${P_1,\ldots,\widehat{P}_i,\ldots,P_d}$, we have ${P_i\in
K_X}$.
\end{proposition}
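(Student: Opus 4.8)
The plan is to connect the vanishing of Mumford's trace map to the Cayley–Bacharach condition through the two equivalent descriptions of $Tr_\gamma$ given above, and then argue pointwise at a generic fiber. The key observation is that the trace map factors as $Tr_\gamma = \gamma^*_{|V}\circ\delta_d$, so that for a holomorphic $n$-form $\omega\in H^{n,0}(X)$, the induced form $\omega_\gamma$ on $U$ vanishes identically (by the null-trace hypothesis) precisely when $\gamma^*_{|V}\big(\pi_*(\omega^{(d)})\big)=0$. The heart of the matter is to unwind what $\gamma^*_{|V}\big(\pi_*(\omega^{(d)})\big)$ computes at a generic point $y\in V$, where the fiber $\pi_1^{-1}(y)=\{(y,P_1),\ldots,(y,P_d)\}$ consists of $d$ distinct reduced points.

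First I would fix a generic point $y\in V\subset U$ and choose local holomorphic coordinates near $y$ on $Y$, so that the $d$ sheets of $\pi_1$ are locally given by sections $s_i\colon V\dashrightarrow\Gamma$ with $\pi_2\circ s_i$ sending $y'\mapsto P_i(y')$. Under the quotient description, the pullback $\gamma^*_{|V}(\pi_*\omega^{(d)})$ is the sum $\sum_{i=1}^d (\pi_2\circ s_i)^*\omega$ of the pullbacks of $\omega$ along the $d$ local branches. Thus the null-trace condition says that for every $\omega\in H^{n,0}(X)$, this sum of local $n$-forms on $V$ vanishes identically in a neighborhood of $y$. The next step is to extract the pointwise Cayley–Bacharach statement: I would evaluate the identity $\sum_i (\pi_2\circ s_i)^*\omega\equiv 0$ at the point $y$ itself, together with its behavior under the derivatives of the branches, to control how each $\omega$ is forced to vanish at the $P_i$.

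The decisive step is the following infinitesimal argument. Suppose an effective canonical divisor $D\in|K_X|$, say the zero locus of $\omega\in H^{n,0}(X)$, passes through all the points $P_1,\ldots,\widehat P_i,\ldots,P_d$ but we wish to deduce $P_i\in D$. Because $\omega$ vanishes at $P_j$ for $j\neq i$, every summand $(\pi_2\circ s_j)^*\omega$ vanishes to first order at $y$ for $j\neq i$; the identity $\sum_j(\pi_2\circ s_j)^*\omega\equiv 0$ then forces the single remaining summand $(\pi_2\circ s_i)^*\omega$ to vanish at $y$ as well. Evaluating this at $y$ and using that $\pi_2\circ s_i$ is a local submersion onto a neighborhood of $P_i$ (which holds for generic $y$ since $\pi_2$ is dominant and $\Gamma$ is smooth at the relevant points after discarding a proper closed subset), one concludes $\omega(P_i)=0$, i.e. $P_i\in D$. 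This is exactly the Cayley–Bacharach condition with respect to $|K_X|$.

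\textbf{Main obstacle.} The hard part will be the transition from the global identity $Tr_\gamma(\omega)=0$ on the $n$-dimensional base $U$ to the pointwise vanishing of the single surviving branch at a generic fiber, handled carefully enough to cover the possibility that $\pi_2$ ramifies or that branches are tangent to the zero locus of $\omega$. The clean way around this is to restrict to the genericity built into the definition of $V$ (distinct reduced fiber points) and to the regular locus where $\pi_2$ is a local isomorphism onto its image, so that each $(\pi_2\circ s_i)^*$ is injective on germs of $n$-forms at $P_i$; then the vanishing of the pulled-back form genuinely detects the vanishing of $\omega$ at $P_i$ rather than some spurious cancellation among the branches. Since the argument is formally identical to \cite[Proposition 2.2]{LP} with $X,Y$ taken $n$-dimensional rather than surfaces, I would emphasize only these dimension-agnostic points and refer to loc. cit. for the remaining routine verifications.
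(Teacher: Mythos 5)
Your core argument is correct and is exactly the proof the paper has in mind: the paper does not prove this proposition at all, but cites \cite{LP}, Proposition 2.2, and asserts the argument is dimension-independent; what you write out (the trace as the sum $\sum_i(\pi_2\circ s_i)^*\omega$ over local branches, evaluation at the base point, vanishing of all summands but one, nondegeneracy of the remaining branch differential) is precisely that argument in dimension $n$.

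The one genuine discrepancy is the set of points $y$ your proof covers. The proposition is asserted for \emph{every} ${y\in Y_{reg}}$ with ${\dim\pi_1^{-1}(y)=0}$, whereas your argument -- as you yourself stipulate -- works only on the dense open set where the fiber consists of $d$ distinct points and $\pi_2$ is unramified at each of them. Be aware that this restriction cannot be removed by a routine limiting argument, because the statement as literally written (the set-theoretic Cayley--Bacharach condition of Definition \ref{definition CAYLEY-BACHARACH CONDITION} applied to the fiber cycle with multiplicities) actually fails at non-reduced fibers. Concretely: take ${n=1}$, $X=C$ a non-hyperelliptic curve of genus $3$, ${Y=\mathbb{P}^1}$, and $\Gamma$ the graph of a base-point-free $g^1_3$; this is a degree-$3$ correspondence with null trace (the induced trace form is holomorphic on all of $\mathbb{P}^1$, hence zero), and at a simple branch point the fiber cycle is ${2q+r}$. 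Set-theoretic Cayley--Bacharach would force every canonical divisor through $q$ -- every line section of the canonical quartic through $q$ -- to pass through $r$, which is absurd; only the scheme-theoretic statement (the line tangent at $q$ passes through $r$) survives, since ${h^0(K_C-2q-r)=h^0(K_C-2q)=1}$ while ${h^0(K_C-q-r)=1<2=h^0(K_C-q)}$. So your restriction to generic $y$ is in fact the only tenable reading of the claim, and it is the only case the paper ever uses (Theorem \ref{theorem CORRESPONDENCES ON C(k)} invokes the proposition at a generic point); but you should build that restriction into the statement you prove, rather than leave it as a remark, or else reformulate the Cayley--Bacharach condition scheme-theoretically if the full range of $y$ is wanted.
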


\smallskip
Now we turn to symmetric products of curves and we state the main result of this section. Let us consider a smooth projective curve $C$ of genus ${g}$
and let $C^{(k)}$ be its $k$-fold symmetric product, with ${2\leq k\leq g-1}$. Let us denote by ${\phi\colon C\longrightarrow \mathbb{P}^{g-1}}$ the
map induced by the canonical linear series $|K_C|$ on $C$. The following result connects the existence of a correspondence with null trace on $C^{(k)}$ and the geometry of the canonical image of $C$.

\begin{theorem}\label{theorem CORRESPONDENCES ON C(k)}
Let $C$ be a smooth projective curve of genus $g$ and let $Y$ be a projective integral variety of dimension ${2\leq k\leq g-1}$. Let $\Gamma$ be a
correspondence of degree $d\geq 2$ on ${Y\times C^{(k)}}$ with null trace. For a generic point ${y\in Y_{reg}}$, let
${\pi_1^{-1}(y)=\{(y,P_i)\in\Gamma\,|\,i=1,\ldots,d\}}$ be its fiber, where ${P_i=p_{i1}+\ldots+p_{ik}\in C^{(k)}}$ for ${i=1,\ldots,d}$.
Then the linear span of all the $\phi(p_{ij})$'s in $\mathbb{P}^{g-1}$ has dimension
\begin{displaymath}
s\leq \left[\frac{k\,d}{2}\right]-1.
\end{displaymath}
\begin{proof}
By Proposition \ref{proposition LOPEZ PIROLA} the $0$-cycle ${P_1+\ldots+P_d}$ satisfies the Cayley-Bacharach condition with respect to the canonical
linear series $|K_{C^{(k)}}|$. For any ${(g-k-1)}$-plane ${L\subset \mathbb{P}^{g-1}}$, Lemma \ref{lemma CANONICAL DIVISORS ON C(k)} assures that the
subordinate locus $\Gamma_k(\mathcal{D}_L)$ is an effective canonical divisor on $C^{(k)}$. Therefore for every ${i=1,\ldots,d}$ and for any ${L\in
\mathbb{G}(g-k-1,g-1)}$ with ${P_1,\ldots,\widehat{P_i},\ldots,P_d\in \Gamma_k(\mathcal{D}_L)}$, we have ${P_i\in \Gamma_k(\mathcal{D}_L)}$ as well.

Now, let ${\mathcal{G}_k\colon C^{(k)} \dashrightarrow \mathbb{G}(k-1,g-1)}$ be the Gauss map sending a point ${P=p_1+\ldots+p_k}$ to the linear span
of the $\phi(p_j)$'s in $\mathbb{P}^{g-1}$. We recall that for any ${L\in \mathbb{G}(g-k-1,g-1)}$ and for any ${P\in C^{(k)}}$, we have that ${P\in
\Gamma_k(\mathcal{D}_L)}$ if and only if $\mathcal{G}_k(P)$ intersects $L$. Thus the $0$-cycle ${P_1+\ldots+P_d}$ is such that for every
${i=1,\ldots,d}$ and for any ${L\in \mathbb{G}(g-k-1,g-1)}$ intersecting the $(k-1)$-planes
${\mathcal{G}_k(P_1),\ldots,\widehat{\mathcal{G}_k(P_i)},\ldots,\mathcal{G}_k(P_d)}$, we have ${\mathcal{G}_k(P_i)\cap L\neq \emptyset}$.

In particular, the $(k-1)$-planes ${\mathcal{G}_k(P_1),\ldots,\mathcal{G}_k(P_d)}$ are in special position with respect to $(g-k-1)$-planes (cf. Definition \ref{definition CB FOR LINEAR SUBSPACES}). Therefore Theorem \ref{theorem CB FOR LINEAR SUBSPACES bis} assures that their linear span in $\mathbb{P}^{g-1}$ has dimension lower than $\left[\frac{kd}{2}\right]$. Since any $\mathcal{G}_k(P_i)$ is generated by ${\phi(p_{i1}),\ldots,\phi(p_{ik})}$, we conclude that the linear span of all the $\phi(p_{ij})$'s in $\mathbb{P}^{g-1}$ has dimension bounded by ${\left[\frac{kd}{2}\right]-1}$ as claimed.
\end{proof}
\end{theorem}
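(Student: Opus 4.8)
The plan is to convert the null-trace hypothesis into a projective-geometric statement about the Gauss images of the fibre points and then to invoke the linear-algebra bound of Theorem \ref{theorem CB FOR LINEAR SUBSPACES bis}; the backbone is the chain null trace $\Rightarrow$ Cayley--Bacharach on $C^{(k)}$ $\Rightarrow$ special position of the planes $\mathcal{G}_k(P_i)$ $\Rightarrow$ the dimension bound. First I would apply Proposition \ref{proposition LOPEZ PIROLA} with $X=C^{(k)}$: since $\Gamma$ has null trace and the generic $y\in Y_{reg}$ has zero-dimensional fibre, the $0$-cycle $P_1+\ldots+P_d$ satisfies the Cayley--Bacharach condition with respect to $|K_{C^{(k)}}|$, i.e.\ for each $i$ every effective canonical divisor through $P_1,\ldots,\widehat{P_i},\ldots,P_d$ also contains $P_i$.

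Next I would feed into this condition the canonical divisors supplied by Lemma \ref{lemma CANONICAL DIVISORS ON C(k)}: for every $(g-k-1)$-plane $L\subset\mathbb{P}^{g-1}$ the subordinate locus $\Gamma_k(\mathcal{D}_L)$ lies in $|K_{C^{(k)}}|$. Restricting the Cayley--Bacharach condition to this subfamily only weakens the hypothesis in the direction I need, yielding: whenever $L\in\mathbb{G}(g-k-1,g-1)$ satisfies $P_1,\ldots,\widehat{P_i},\ldots,P_d\in\Gamma_k(\mathcal{D}_L)$, then $P_i\in\Gamma_k(\mathcal{D}_L)$ as well. Invoking the dictionary recorded after Lemma \ref{lemma CANONICAL DIVISORS ON C(k)}, namely that $P\in\Gamma_k(\mathcal{D}_L)$ if and only if the Gauss image $\mathcal{G}_k(P)$ meets $L$, this is exactly the assertion that the $(k-1)$-planes $\mathcal{G}_k(P_1),\ldots,\mathcal{G}_k(P_d)$ are in special position with respect to $(g-k-1)$-planes in the sense of Definition \ref{definition CB FOR LINEAR SUBSPACES}, taking $n=g-1$ so that $n-k=g-k-1$.

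It then remains to apply Theorem \ref{theorem CB FOR LINEAR SUBSPACES bis} to these $d$ planes in $\mathbb{P}^{g-1}$, which bounds the dimension of their span by $[kd/2]-1$. Since each $\mathcal{G}_k(P_i)$ is by definition the linear span of $\phi(p_{i1}),\ldots,\phi(p_{ik})$, the span of all the $\phi(p_{ij})$ coincides with $Span(\mathcal{G}_k(P_1),\ldots,\mathcal{G}_k(P_d))$, and the claimed bound $s\leq[kd/2]-1$ follows at once.

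The step requiring the most care, and the conceptual heart of the argument, is the middle translation: one must be sure that the canonical divisors $\Gamma_k(\mathcal{D}_L)$ obtained by projecting the canonical curve from $(g-k-1)$-planes are the correct test objects, so that the incidence $\mathcal{G}_k(P)\cap L\neq\emptyset$ faithfully records membership in a canonical divisor. This rests on Lemma \ref{lemma CANONICAL DIVISORS ON C(k)} together with the General Position Theorem, which for generic $y$ ensures that the points $\phi(p_{i1}),\ldots,\phi(p_{ik})$ are linearly independent, so that each $\mathcal{G}_k(P_i)$ is a genuine $(k-1)$-plane and the hypotheses of Theorem \ref{theorem CB FOR LINEAR SUBSPACES bis} genuinely apply.
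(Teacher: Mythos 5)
Your proposal is correct and follows essentially the same route as the paper's own proof: null trace implies the Cayley--Bacharach condition via Proposition \ref{proposition LOPEZ PIROLA}, testing against the canonical divisors $\Gamma_k(\mathcal{D}_L)$ of Lemma \ref{lemma CANONICAL DIVISORS ON C(k)} translates this into special position of the Gauss images $\mathcal{G}_k(P_i)$, and Theorem \ref{theorem CB FOR LINEAR SUBSPACES bis} then gives the bound. Your closing remark about the General Position Theorem guaranteeing that each $\mathcal{G}_k(P_i)$ is a genuine $(k-1)$-plane for generic $y$ is a point the paper leaves implicit in the definition of the Gauss map, but it is the same argument.
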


As we anticipated, Theorem \ref{theorem CORRESPONDENCES ON C(k)} shall turn out to be very useful to deal with degree of irrationality and degree of
gonality on second symmetric products of curves. In particular, when ${k=2}$ the latter result assures the $\phi(p_{ij})$'s span a linear subspace of
$\mathbb{P}^{g-1}$ of dimension ${s\leq d-1}$.

\smallskip
The following assertion is an immediate consequence of Theorem \ref{theorem CORRESPONDENCES ON C(k)} connecting the existence of correspondences with
null trace on $C^{(k)}$ and the existence of complete linear series on $C$.
\begin{corollary}\label{corollary CORRESPONDENCES ON C(k)}
Under the assumption of Theorem \ref{theorem CORRESPONDENCES ON C(k)}, suppose in addiction that $C$ is non-hyperelliptic and that the number of
distinct $p_{ij}$'s is ${m>[\frac{kd}{2}]}$. Then $C$ possesses a complete $g_m^r$ with ${r\geq 1}$.
\proof
Let $m$ be the number of distinct $p_{ij}$'s on $C$ and let us denote by ${q_1,\ldots,q_m}$ these points. Consider the divisor ${D=q_1+\ldots+q_m}$ of
degree $m$ on $C$. As the curve $C$ is non-hyperelliptic, the canonical map $\phi$ is an embedding and the $\phi(q_t)$'s are all distinct. Hence their linear span
in $\mathbb{P}^{g-1}$ has dimension lower than $[\frac{kd}{2}]$. Therefore by the geometric version of Riemann-Roch theorem we have
\begin{equation*}
\dim |D|=m-1-\dim \overline{\phi(D)}\geq m-\left[\frac{kd}{2}\right]\geq 1.
\end{equation*}
Thus ${|D|=|q_1+\ldots+q_m|}$ is a complete $g^r_m$ on $C$ with ${r\geq 1}$.
\endproof
\end{corollary}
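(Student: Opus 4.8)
The plan is to distill from the points $p_{ij}$ a single effective divisor on $C$ whose complete linear system has positive dimension, estimating that dimension by combining the span bound of Theorem \ref{theorem CORRESPONDENCES ON C(k)} with the geometric form of the Riemann--Roch theorem.

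First I would collect the $m$ distinct points among the $p_{ij}$'s, calling them $q_1,\ldots,q_m$, and form the effective divisor $D:=q_1+\ldots+q_m$ of degree $m$ on $C$. Here the non-hyperelliptic hypothesis enters decisively: it guarantees that the canonical map $\phi\colon C\longrightarrow\mathbb{P}^{g-1}$ is an embedding, so that $\phi(q_1),\ldots,\phi(q_m)$ are $m$ genuinely distinct points of $\mathbb{P}^{g-1}$ and no identification occurs when we pass to the canonical model.

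Next I would invoke Theorem \ref{theorem CORRESPONDENCES ON C(k)}, which bounds the dimension of the linear span of all the $\phi(p_{ij})$'s by $[\frac{kd}{2}]-1$. Since the distinct images among them are precisely $\phi(q_1),\ldots,\phi(q_m)$, the linear span $\overline{\phi(D)}$ of the divisor $D$ inherits the same bound, namely $s:=\dim\overline{\phi(D)}\leq[\frac{kd}{2}]-1$. Applying geometric Riemann--Roch on the canonically embedded curve, in the form $\dim|D|=\deg D-1-\dim\overline{\phi(D)}$, I would then compute
\begin{equation*}
\dim|D| = m-1-s \geq m-1-\left(\left[\frac{kd}{2}\right]-1\right) = m-\left[\frac{kd}{2}\right].
\end{equation*}
The standing assumption $m>[\frac{kd}{2}]$ immediately forces $\dim|D|\geq 1$, so $|D|=|q_1+\ldots+q_m|$ is a complete $g^r_m$ with $r\geq 1$, which is exactly the conclusion.

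The step demanding the most attention is the transfer of the span bound from the full collection of $\phi(p_{ij})$'s to the divisor $D$: one must observe that repeated points contribute nothing to the span, so that the span of the $m$ distinct images equals the span of the entire collection, and one must use non-hyperellipticity precisely to ensure that $\deg D=m$ coincides with the number of distinct spanning points appearing in geometric Riemann--Roch. Beyond this bookkeeping the argument is a direct substitution, so the hard work has already been done inside Theorem \ref{theorem CORRESPONDENCES ON C(k)} and there is no further geometric obstacle to overcome.
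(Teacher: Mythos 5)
Your proof is correct and follows essentially the same path as the paper's own argument: form the reduced divisor $D=q_1+\ldots+q_m$ from the distinct points, use non-hyperellipticity to ensure the canonical map embeds them as $m$ distinct points whose span is bounded by Theorem \ref{theorem CORRESPONDENCES ON C(k)}, and conclude $\dim|D|\geq m-\left[\frac{kd}{2}\right]\geq 1$ via geometric Riemann--Roch. Your explicit remark that repeated $p_{ij}$'s contribute nothing to the span is exactly the observation implicit in the paper's proof, so there is nothing to add.
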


\begin{remark}
In \cite{GH1}, Griffiths and Harris study $0$-cycles on an algebraic variety $X$ satisfying the Cayley-Bacharach condition with respect to a complete
linear system $|D|$. In particular, given such a $0$-cycle ${P_1+\ldots+P_d}$ and the rational map ${\phi_{|D|}\colon X\dashrightarrow \mathbb{P}^r}$,
they present some results on the dimension of the linear span of the $\phi_{|D|}(P_i)$'s in $\mathbb{P}^r$ and, consequently, on the existence of linear
series on $X$. We note that we start from an analogous situation with ${X=C^{(k)}}$, but the results of this section deal with the study of the
geometry of the curve $C$ and not with $X$.
\end{remark}

To conclude this section, we would like to present two important examples of correspondences with null trace on the $k$-fold symmetric product, which
shall be involved in the proofs of the following sections.

\begin{example}\label{example CORRESPONDENCE of RATIONAL MAP}
For any dominant rational map ${F\colon C^{(k)}\dashrightarrow \mathbb{P}^k}$ of degree $d$, its graph
\begin{displaymath}
\Gamma :=\overline{\left\{(y,P)\in \mathbb{P}^k\times C^{(k)}\mid F(P)=y \right\}},
\end{displaymath}
is a $(d,1)$-correspondence on ${\mathbb{P}^k\times C^{(k)}}$ with null trace. To see this fact, notice that the fiber $F^{-1}(y)$ over a generic point
${y\in \mathbb{P}^k}$ is given by $d$ distinct points ${P_1,\ldots,P_d\in C^{(k)}}$. Hence $\Gamma$ is a reduced variety and the projection
${\pi_1\colon\Gamma\longrightarrow \mathbb{P}^k}$ is a generically finite dominant morphism of degree $d$.
Moreover, we have that the map ${\gamma\colon U\subset\mathbb{P}^k\longrightarrow \left(C^{(k)}\right)^{(d)}}$ introduced at the beginning of this section is a rational map between smooth projective varieties.
Therefore the indeterminacy locus can be resolved to a codimension 2 subvariety of $\mathbb{P}^k$.
Being ${H^{k,0}(\mathbb{P}^k)}$ trivial, we then have that the trace map $Tr_{\gamma}$ is null.
Thus $\Gamma$ is a $(d,1)$-correspondence on ${\mathbb{P}^k\times C^{(k)}}$ with null trace.
We note that this fact is still true for any smooth $n$-dimensional projective variety admitting a dominant rational map of degree $d$ on $\mathbb{P}^n$.
\end{example}

\begin{example}\label{example CORRESPONDENCE of MOVING CURVES COVERING C(k)}
Let $T$ be a ${(k-1)}$-dimensional smooth variety and let ${\mathcal{E}=\{E_t\}_{t\in T}}$ be a family of curves covering $C^{(k)}$ such
that the generic member $E_t$ is an irreducible $d$-gonal curve, i.e. the normalization of $E_t$ has a base-point-free $g^1_d$. As in \cite[proof of Corollary 1.7]{LP}, we want to define a correspondence ${\Gamma\subset Y\times C^{(k)}}$ with null trace and degree $d$ for an appropriate ruled variety $Y$.

To this aim, we think the variety $C^{(k)}$ embedded in some projective space and for any $t\in T$, we denote by $H_t$ the Hilbert scheme of curves on $C^{(k)}$ containing the point $e_t$ representing $E_t$. We note that if $t$ is generic, then $\dim H_t \geq k-1$. Indeed, if $\dim H_t< k-1$ for any $t$, then ${\displaystyle\bigcup_{t\in T} H_t=\bigcup_{\deg \widetilde{E}_t,g(\widetilde{E}_t)} Hilb_{\deg \widetilde{E}_t,g(\widetilde{E}_t)}}$ would be the union of countably many schemes of dimension $\leq k-2$, and the curves parametrized over it would cover the $k$-dimensional variety $C^{(k)}$, a contradiction.
Given a generic point $t\in T$, let $H\subset H_t$ be a $(k-1)$-dimensional subvariety containing $e_t$.
We recall that the Hilbert scheme is a fine moduli space, so we can consider the universal family ${\mathcal{U}:=\left\{(e_{\tau},P)\mid e_{\tau}\in H \textrm{ and } P\in E_{\tau}\right\}}$ over $H$ and -~up to shrink $H$~- the family $\widetilde{\mathcal{U}}$ of the normalized curves. Furthermore, by making a base change
\begin{displaymath}
\xymatrix{ \mathcal{F}\ar[d]_{\rho}\ar[r] & \widetilde{\mathcal{U}}\ar[d]^{pr_1}\\ B\ar[r] & H\\}
\end{displaymath}
we can assume to have a map ${\mu\colon \mathcal{F}\longrightarrow \mathbb{P}^1\times B}$ such that the diagram
\begin{displaymath}
\xymatrix{ \mathcal{F}\ar[dr]_{\rho}\ar[r]^{\mu} & \mathbb{P}^1\times B\ar[d]^{pr_2}\\  & B\\}
\end{displaymath}
is commutative and for any $b\in B$, the restriction ${\mu_b\colon \mathcal{F}_b\longrightarrow \mathbb{P}^1}$ is the given $g^1_d$ on the curve ${\mathcal{F}_b}:=\rho^{-1}(b)$. We note that any fiber of $\mathcal{F}$ is the normalization $\nu_b\colon \mathcal{F}_b\longrightarrow \mathcal{F}'_b$ of a curve lying on $C^{(k)}$. We then define the $k$-dimensional varieties $Y:=\mathbb{P}^1\times B$ and
\begin{displaymath}
\Gamma :=\overline{\left\{\big( (z,b) , P \big)\in Y\times C^{(k)}\mid P\in (\mathcal{F}'_b)_{reg} \textrm{ and } \mu_b\circ \nu^{-1}_b(P)=z \right\}}.
\end{displaymath}

We claim that ${\Gamma\subset Y\times C^{(k)}}$ is a correspondence of degree $d$ with null trace.
Since the $\mathcal{F}_b$'s are the normalizations of curves covering an open subset of $C^{(k)}$ and the map $\mu_b$ is a $g^1_d$, we have that both the projections ${\pi_1\colon \Gamma\longrightarrow Y}$ and ${\pi_2\colon \Gamma\longrightarrow C^{(k)}}$ are dominant morphisms, with ${\deg \pi_1=d}$.
Furthermore, the map $\pi_2$ is generically finite as well: if there exist infinitely many curves $\mathcal{F}'_b$ passing through the generic point ${P\in C^{(k)}}$, then $B$ would be at least a $k$-dimensional variety.
Finally, the space ${H^{k,0}(Y)}$ is trivial because $Y$ is a ruled projective variety and -~by arguing as in the previous example~- we deduce that the correspondence $\Gamma$ has null trace.
\end{example}

\bigskip
\section{Degree of gonality}\label{section DEGREE OF GONALITY}

Let $C$ be a smooth complex projective curve of genus $g\geq 0$.
In this section we deal with the gonality of moving curves on the second symmetric product $C^{(2)}$ and we compute the degree of gonality of this
surface, which is the positive integer we defined to be
\begin{displaymath}
\degree_o(C^{(2)}):=\min\left\{d\in \mathbb{N}\,\left|\,\begin{array}{l} \textrm{there exists a family } \mathcal{E}=\left\{E_t\right\}_{t\in T} \\
\textrm{covering }C^{(2)} \textrm{ whose generic member is} \\ \textrm{an irreducible $d$-gonal curve} \end{array}\right. \right\}\,.
\end{displaymath}

\smallskip
The degree of gonality of $C^{(2)}$ in cases of low genera is easily given. When $C$ is a rational curve, then ${C^{(2)}\cong \mathbb{P}^2}$ and hence
${\degree_o(C^{(2)})=1}$.
On the other hand, if $C$ is supposed to be an elliptic curve, then the fibers of the Abel map ${C^{(2)}\longrightarrow J(C)\cong C}$ are isomorphic to
$\mathbb{P}^1$ and the second symmetric product of $C$ is birational to ${C\times \mathbb{P}^1}$, thus ${\degree_o(C^{(2)})=\degree_o(C\times
\mathbb{P}^1)=1}$.

\smallskip
We note that for any $g\geq 0$, the surface $C^{(2)}$ is covered by the family ${\mathcal{C}=\left\{C_p\right\}_{p\in C}}$ of curves parametrized over
$C$, where ${C_p:=C+p=\left\{p+q\,|\,q\in C\right\}}$.
Clearly, any $C_p$ is isomorphic to $C$ itself.
Therefore the degree of gonality of the second symmetric product of a curve of genus $g\geq 0$ is such that
\begin{equation}\label{equation DEGREE OF GONALITY}
\degree_o(C^{(2)})\leq \gon (C).
\end{equation}
\noindent In particular, since the only rational curve lying on the second symmetric product of a hyperelliptic curve of genus $g=2$ is the fiber of
the $g^1_2$ via the Abel map ${u\colon C^{(2)}\rightarrow J(C)}$, we have that $\degree_o(C^{(2)})=2$.

\smallskip
When the curve $C$ has genus $g\geq 3$, Theorem \ref{theorem INTRO GONALITY OF MOVING CURVES} assures that the gonality of moving curves on $C^{(2)}$ must be greater or equal than $\gon(C)$. Hence Theorem \ref{theorem INTRO DEGREE OF GONALITY} follows straightforwardly from the latter result and inequality (\ref{equation DEGREE OF GONALITY}). We then resolved the problem of computing the degree of irrationality on second
symmetric products of curves, that is $\degree_o(C^{(2)})=\gon (C)$ for any $C$ of genus $g\geq 3$.

\begin{remark}
Let $C$ be a smooth projective curve of genus ${g\geq 0}$ and let $C^{(k)}$ be its $k$-fold symmetric product, with ${k\geq 3}$.

When ${k>g}$, we have that $C^{(k)}$ is birational to ${J(C)\times \mathbb{P}^{k-g}}$ by Abel's theorem.
Thus $C^{(k)}$ is covered by a family of rational curves and hence the degree of gonality is ${\degree_o(C^{(k)})=1}$.

On the other hand, when ${k\leq g}$, the $k$-fold symmetric product of $C$ is covered by the family ${\mathcal{C}=\left\{C_P\right\}_{P\in C^{(k-1)}}}$ of curves parametrized over $C^{(k-1)}$, where ${C_P:=\left\{P+q\,|\,q\in C\right\}}$.
Since any such a curve is isomorphic to $C$, we still have the inequality ${\degree_o(C^{(k)})\leq \gon(C)}$.
In particular, it seems natural to conjecture that this bound is actually an equality, but the techniques we used above do not work to prove this
fact.
\end{remark}

Let us now prove Theorem \ref{theorem INTRO GONALITY OF MOVING CURVES}.
The argument of the proof is essentially based on Theorem \ref{theorem CORRESPONDENCES ON C(k)} and Abel's theorem.

\subsection*{Proof of Theorem \ref{theorem INTRO GONALITY OF MOVING CURVES}}
Notice that if $C$ is a hyperelliptic curve of genus ${g\geq 3}$, the only rational curve lying on $C^{(2)}$ is the fiber of the $g^1_2$ via the Abel map ${u\colon C^{(2)}\rightarrow J(C)}$.
Therefore the gonality of the generic curve $E_t$ must be ${d\geq 2= \gon (C)}$ and the assertion follows. Then we assume hereafter that $C$ is
non-hyperelliptic.
Moreover, aiming for a contradiction we suppose that ${d<\gon(C)}$ and we proceed by steps.

\smallskip
\step{1}{Correspondence on $C^{(2)}$} As $C^{(2)}$ is two-dimensional and $\mathcal{E}$ is a family of curves - up to restrict $\mathcal{E}$ to a
subvariety of $T$ - we can assume that $T$ has dimension one. Following Example \ref{example CORRESPONDENCE of MOVING CURVES COVERING C(k)} we can then construct a family ${\mathcal{F}\stackrel{\rho}{\longrightarrow} B}$ of smooth $d$-gonal curves such that the diagram
\begin{displaymath}
\xymatrix{ \mathcal{F}\ar[dr]_{\rho}\ar[r]^{\mu} & \mathbb{P}^1\times B\ar[d]^{pr_2}\\  & B\\}
\end{displaymath}
is commutative and for any $b\in B$, the restriction ${\mu_b\colon \mathcal{F}_b\longrightarrow \mathbb{P}^1}$ is the given $g^1_d$ on the curve ${\mathcal{F}_b:=\rho^{-1}(b)}$.
In particular, any such a curve is the normalization $\nu_b\colon \mathcal{F}_b\longrightarrow \mathcal{F}'_b$ of a curve lying on $C^{(2)}$, which generically is one of the $E_t$'s.
Then we set $Y:=\mathbb{P}^1\times B$ and we have that the surface
\begin{displaymath}
\Gamma :=\overline{\left\{\big( (z,b) , P \big)\in Y\times C^{(k)}\mid P\in (\mathcal{F}'_b)_{reg} \textrm{ and } \mu_b\circ \nu^{-1}_b(P)=z \right\}}
\end{displaymath}
is a correspondence of degree $d$ on ${Y\times C^{(k)}}$ with null trace.

As usual, let ${\pi_1\colon \Gamma\longrightarrow Y}$ be the restriction of the first projection map and for a very general point ${(z,b)\in Y}$, let
${\pi_1^{-1}(z,b)=\left\{\big( (z,b) , P_i \big)\in Y\times C^{(2)}\,|\,i=1,\ldots,d\right\}}$ be its fiber, with ${P_i=p_{2i-1}+p_{2i}}$.
Moreover, let ${D=D_{(z,b)}\in Div(C)}$ be the effective divisor given by
\begin{equation}\label{equation DIVISOR}
D:=p_1+\ldots+p_{2d}=\sum_{j=1}^m n_j q_j
\end{equation}
for some positive integers ${n_j=\mult_{q_j}(D)}$, where the $q_j$'s are assumed to be distinct points of $C$.

\smallskip
\step{2}{$n_j=1$ for all $j$} We suppose that $n_j=1$ for any ${1\leq j\leq m}$, that is $m=2d$ and the points defining $D$ are all distinct. Let
${\phi\colon C\longrightarrow \mathbb{P}^{g-1}}$ be the canonical embedding of $C$ and let $\overline{\phi(D)}$ be the linear span of the points
$\phi(p_i)'s$ in $\mathbb{P}^{g-1}$. As $\Gamma$ is a correspondence of degree $d$ on $C^{(2)}$ with null trace, by Theorem \ref{theorem
CORRESPONDENCES ON C(k)} we have that $\dim \overline{\phi(D)}\leq d-1$.

If $C$ is a non-hyperelliptic curve of genus $g=3$, we have that $C^{(2)}$ is non covered by a family of rational curves and $\gon(C)=3$.
Hence $d=2$ and the curves of the family $\mathcal{F}$ are hyperelliptic.
Moreover, the linear span ${\overline{\phi(D)}\subset \mathbb{P}^2}$ of the points $\phi (p_1),\ldots ,\phi (p_4)$ is a line, that is $D$ is a
canonical divisor on $C$.
Let ${\iota\colon C^{(2)}\longrightarrow C^{(2)}}$ be the involution ${p+q\longmapsto K_{C}-p-q}$ sending a point to the residual of the canonical
system.
We note that the map $\iota_{b}$ induced on the generic curve $\mathcal{F}_b$ of the family $\mathcal{F}$ is the hyperelliptic involution.
Thus the quotient surface $C^{(2)}/\langle\iota\rangle$ is covered by a family of rational curves, but this is impossible because the latter surface is of general type.
To see this fact it suffices to observe that $C^{(2)}$ is a surface of general type and that the map ${C^{(2)}\longrightarrow \mathbb{P}^{{g \choose
2}-1}}$ induced by the canonical linear system $|K_{C^{(2)}}|$ factors through $\iota$, because the Gauss map $\mathcal{G}$ in (\ref{equation GAUSS
MAP}) does.

On the other hand, let us assume $g\geq 4$.
By the geometric version of Riemann-Roch theorem we have
\begin{equation*}
\dim |D|=\deg D-1-\dim \overline{\phi(D)}\geq 2d-1-(d-1)=d=\frac{\deg D}{2}.
\end{equation*}
Therefore we have that either $D$ is zero, $D$ is a canonical divisor or $C$ is hyperelliptic by Clifford's theorem (cf. \cite[p. 107]{ACGH}).
Notice that $C$ is assumed to be non-hyperelliptic and ${0<d<\gon(C)\leq\left[\frac{g+3}{2}\right]}$.
Hence ${0<\deg D=2d<2g-2}$ for any ${g\geq 4}$ and we have a contradiction.
Thus the points ${p_1,\ldots,p_{2d}}$ can not be distinct.

\smallskip
\step{3}{$n_j>1$ for some $j$} Let us then assume that the points ${p_1,\ldots,p_{2d}}$ are not distinct, i.e. the integers $n_j$'s are not all equal to 1.
For any ${k=1,\ldots,2d}$ let us consider the set ${Q_{k}:=\left\{q_j\in \Supp D\,|\,n_j=k\right\}}$ of the points of $D$ such that ${\mult_{q_j}(D)=k}$.
Notice that the cardinality of any $Q_{k}$ is at most ${\left[\frac{2d}{k}\right]}$.
As the $n_j$'s are not all equal to 1, there exist some ${k>1}$ such that the corresponding sets $Q_{k}$ are not empty. Let $h>1$ be the minimum of these integers and -~without loss of generality~- suppose ${Q_{h}=\{q_1,\ldots,q_s\}}$, where ${s\leq\left[\frac{2d}{h}\right]}$ is the cardinality of $Q_{h}$.

Since $Y$ is connected, the fibers of $\pi_1$ over generic points of $Y$ have the same configuration, i.e. the cardinality of any set $Q_{h}$ is
constant as we vary the point $(z,b)$ on a suitable open set $U\subset Y$.
Thus we may define a rational map $\xi\colon Y \dashrightarrow C^{(s)}$ sending a generic point $(z,b)\in Y=\mathbb{P}^1\times B$ to the effective
divisor $q_1+\ldots+q_s\in C^{(s)}$. For a very general $b\in B$, let
\begin{displaymath}
\begin{array}{cccl}
\xi_b\colon & \mathbb{P}^1\times\{b\} & \longrightarrow & C^{(s)}\\
 & (z,b) & \longmapsto & q_1+\ldots+q_s
\end{array}
\end{displaymath}
be the restriction of $\xi$ to the rational curve ${\mathbb{P}^1\times\{b\}\subset Y}$ and let us consider the composition with the Abel map
\begin{displaymath}
 \mathbb{P}^1\times\{b\} \stackrel{\xi_b}{\longrightarrow} C^{(s)} \longrightarrow J(C)\,.
\end{displaymath}
As $\mathbb{P}^1\times\{b\}$ is a rational curve mapping into a Jacobian variety, the latter map is constant.
Hence by Abel's theorem, either ${|q_1+\ldots+q_s|}$ is a complete linear series of degree $s$ and dimension at least 1, or $\xi_b$ is a constant map.
Being $h>1$ and ${s\leq\left[\frac{2d}{h}\right]}$, we have ${s\leq d< \gon(C)}$. Then ${|q_1+\ldots+q_s|}$ can not be such a linear series.

Therefore the map $\xi_b$ must be constant.
By the construction of $\xi_b$, this fact means that for any $z\in \mathbb{P}^1$ the divisor $D=D_{(z,b)}$ -~defined in (\ref{equation DIVISOR}) by the fiber $\pi_1^{-1}(z,b)$~- must contain all the points $q_1,\ldots,q_s$, that are now fixed.
We recall that ${\pi_1^{-1}(z,b)}$ is given by the points ${((z,b),P_i)\in Y\times C^{(2)}}$ such that ${P_i\in \mathcal{F}'_b}$ and ${\mu_b\circ \nu_b^{-1}(P_i)=z}$, where ${P_i=p_{2i-1}+p_{2i}}$.
Hence one of the $P_i$'s must lie on the curve $C+q_1$, one on $C+q_2$ and so on.
As we vary $z$ on $\mathbb{P}^1$, the $P_i$'s must vary on $\mathcal{F}'_b$, but the latter condition must hold.
It follows that the curve $\mathcal{F}'_b$ must have at least $s$ irreducible components ${\mathcal{F}'_{b1},\ldots,\mathcal{F}'_{bs}}$ such that ${\mathcal{F}'_{bj}\subset C+q_j}$ for ${1\leq j\leq s}$. We recall that for the generic ${b\in B}$, the curve $\mathcal{F}'_b$ coincides with a generic element $E_{t}$ of the family $\mathcal{E}$.
Since both $E_{t}$ and ${C+q_j}$ are irreducible curves, we deduce ${s=1}$ and ${E_t=C+q_1}$.
Then we get a contradiction because ${C+q_1\cong C}$ and hence ${d=\gon(E_t)=\gon(C)}$.
Thus we conclude that the gonality $d$ of the generic $E_t$ is ${d\geq \gon(C)}$.

\smallskip
\step{4}{$d=\gon(C)$} To conclude the proof of the statement, let us suppose that $C$ is a curve of genus ${g\geq 6}$ with
${Aut(C)=\left\{Id_C\right\}}$ and ${d=\gon(\widetilde{E}_t)=\gon(C)}$. We want to prove that the generic $E_t$ and $C$ are isomorphic.

To this aim, let us consider the correspondence ${\Gamma\subset Y\times C^{(2)}}$ defined above and for a generic point ${(z,b)\in Y}$, let
${\pi_1^{-1}(z,b)=\left\{\big( (z,b) , P_i \big)\right\}}$ be its fiber, with ${P_i=p_{2i-1}+p_{2i}}$.
By arguing as in Step 1 we deduce that if the $p_i$'s can not be distinct.
If they were distinct, then ${\dim |D|=\frac{\deg D}{2}}$ and -~by Clifford's theorem~- we would have that either $D$ is zero, $D$ is a canonical
divisor or $C$ is hyperelliptic.
We note that the assumption ${Aut(C)=\left\{Id_C\right\}}$ implies that $C$ is non-hyperelliptic.
Moreover, the degree of $D$ is positive and
\begin{displaymath}
\deg D=2d=2 \gon(C)\leq 2\left[\frac{g+3}{2}\right]<2g-2\quad \textrm{for any } g\geq 6.
\end{displaymath}
Hence the divisor $D$ is neither zero nor canonical and we have a contradiction.

Then we follow the argument of Step 3 and for the generic ${b\in B}$ we may define the map ${\xi_b\colon \mathbb{P}^1\times\{b\}\longrightarrow
C^{(s)}}$, with ${s\leq d}$.

If ${s<d=\gon (C)}$, the only possible choice is ${s=1}$ because of the irreducibility of $E_t$.
Hence the generic $E_t$ and $C$ turn out to be isomorphic.

On the other hand suppose that ${s=d=\gon(C)}$.
Then ${h=2}$ and the divisor $D=D_{(z,b)}$ in (\ref{equation DIVISOR}) has the form ${D=2(q_1+\ldots+q_d)}$.
Since the point ${(z,b)\in Y}$ is generic and the projections ${\pi_1\colon \Gamma \longrightarrow Y}$, ${\pi_2\colon \Gamma \longrightarrow C^{(2)}}$ are generically finite dominant morphism, we can assume that none of the $P_i$'s lie on the diagonal curve ${\Delta:=\{p+p\mid p\in C\}\subset C^{(2)}}$: indeed, if the fiber of $\pi_1$ over the generic point met the curve ${\pi_2^{-1}(\Delta)\subset \Gamma}$, such a curve would dominate the surface $Y$.
Moreover, let $E_t$ be the generic element of the family $\mathcal{E}$ corresponding to the curve $\mathcal{F}'_b$.
Thus -~without loss of generality~- the points ${P_i\in E_t\subset C^{(2)}}$ are given by
\begin{displaymath}
P_1=q_1+q_2,\,P_2=q_2+q_3,\ldots,\,P_{d-1}=q_{d-1}+q_d\,\textrm{ and }\,P_d=q_d+q_1.
\end{displaymath}
Furthermore, since the map ${\xi_b\colon \mathbb{P}^1\times\{b\}\longrightarrow C^{(d)}}$ is non constant and $d=\gon(C)$, we have that ${|q_1+\ldots+q_d|}$ is a base-point-free $g^1_d$ on $C$. This imply that ${(E_t\cdot C+q_j)=2}$ for any $j$; indeed ${(E_t\cdot C+q_j)\geq 2}$ because two of the $P_i$'s lie on ${C+q_j}$, and if there existed another point ${p+q_j\in E_t}$, then it would lie on the support of a divisor $D_{(\overline{z},b)}$ of the $g^1_d$ for some ${\overline{z}\in \mathbb{P}^1}$, thus $q_j$ would be a base point. Hence ${(E_t\cdot C+q)=2}$ for any ${q\in C}$ because the curves ${C+q}$'s are all numerically equivalent. We then distinguish two cases.

Suppose that $d=2n$ is even and let us show that this situation can not occur. So let us consider the permutation ${\sigma\in S_d}$ given by ${\sigma(j)=j+n}$ mod. $d$ for any ${1\leq j\leq d}$. Then $\sigma$ induces an involution $\alpha_{\sigma}$ on ${\{q_1,\ldots,q_d\}}$ sending a point $q_j$ to the point $q_{\sigma(j)}$. In other words, we can think the $q_j$'s as the vertices of a convex polygon whose sides correspond to the $P_i$'s (e.g. the side $P_1$ is the one joining $q_1$ and $q_2$), and the involution above sends any $q_j$ to the opposite vertex. In particular, this point of view shows that $\alpha_{\sigma}$ depends only from the configuration of the fiber over $(z,b)$ and it can be defined independently from the choice of the indices of the $q_j$'s. As $(E_t\cdot C+q)=2$ for any ${q\in C}$ and the fiber of $\pi_1$ over $(z,b)$ varies holomorphically as we vary ${z\in \mathbb{P}^1}$, we can extend the involution above to an automorphism ${\alpha\colon C\longrightarrow C}$. Since $\alpha$ is not the identity on $C$ and ${Aut(C)=\left\{Id_C\right\}}$, we have a contradiction.

Finally, let us assume that ${d=2n+1}$ is odd and let us show that $E_t\cong C$. We can define a one-to-one map from ${\{q_1,\ldots,q_d\}}$ to the set of the $P_i$'s by sending a point $q_j\in C$ to the point $q_{j+n}+q_{j+n+1}\in E_t$, where the indices are taken mod. $d$ (i.e. such a map associates to any vertex of the polygon the opposite side). By fixing $b\in B$ and varying ${z\in \mathbb{P}^1}$, we then have an isomorphism between $C$ and $E_t$ as claimed.

Thus Theorem \ref{theorem INTRO GONALITY OF MOVING CURVES} is now proved.

\bigskip
\section{Degree of irrationality}\label{section DEGREE OF IRRATIONALITY}

Let $C$ be a smooth complex projective curve of genus $g\geq 0$.
We want to study the degree of irrationality of the surface $C^{(2)}$,
\begin{displaymath}
\degree_r(C^{(2)}):=\min\left\{d\in \mathbb{N}\,\left|\,\begin{array}{l} \textrm{there exists a dominant rational }\\  \textrm{map } F\colon
C^{(2)}\dashrightarrow \mathbb{P}^2 \textrm{ of degree } d\end{array}\right. \right\}\,,
\end{displaymath}
in dependence both on the genus and on the gonality of $C$.

\smallskip
When $C$ is either a rational or an elliptic curve, the problem of determining the degree of irrationality of $C^{(2)}$ is totally understood. Namely,
if $C$ is rational, then $C^{(2)}$ is isomorphic to $\mathbb{P}^2$.
Hence the second symmetric product is a rational surface and ${\degree_r(C^{(2)})=1}$.

On the other hand, let us suppose that $g=1$. By Abel's theorem $C^{(2)}$ is birational to the non-rational surface $C\times \mathbb{P}^1$.
The curve $C$ admits a double covering ${f\colon C\longrightarrow\mathbb{P}^1}$, therefore we may define the degree two map ${f\times
Id_{\mathbb{P}^1}\colon C\times \mathbb{P}^1\longrightarrow\mathbb{P}^1\times \mathbb{P}^1}$.
Finally, being ${\mathbb{P}^1\times \mathbb{P}^1}$ and $\mathbb{P}^2$ birational surfaces, we conclude ${\degree_r(C^{(2)})=\degree_r(C\times
\mathbb{P}^1)=2}$.

\smallskip
When $C$ is a smooth complex projective curve of genus ${g\geq 2}$, the problem of computing the degree of irrationality of $C^{(2)}$ is still open.
As a consequence of the main result in \cite{AP}, we have the following result providing a lower bound.

\begin{proposition}\label{proposition ALZATI PIROLA}
Let $C^{(k)}$ be the $k$-fold symmetric product of a smooth curve $C$ of genus ${g\geq k\geq 2}$.
Then ${\degree_r(C^{(k)})\geq k+1}$.
\begin{proof}
Let ${F\colon C^{(k)}\dashrightarrow \mathbb{P}^k}$ be a dominant rational map of degree $d$.
By \cite[Theorem 3.4]{AP}, we have that ${d\left(\hl(\mathbb{P}^k)+1\right)\geq \hl(C^{(k)})+1}$, where $\hl(X)$ denotes the length of the graded
algebra ${H^{1,0}(X)\oplus\ldots\oplus H^{k,0}(X)}$, that is the maximum integer $r$ such that there exist homogeneous elements
${\omega_1,\ldots,\omega_r}$ with ${\omega_1\wedge\ldots\wedge\omega_r\neq 0}$.
Since ${\hl(\mathbb{P}^k)=0}$ and ${\hl(C^{(k)})=k}$ by (\ref{equation CANONICAL LINEAR SERIES ON C(k)}), we conclude that ${d\geq k+1}$ as claimed.
\end{proof}
\end{proposition}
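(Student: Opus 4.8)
The plan is to bound the degree of irrationality from below by a birational invariant of $C^{(k)}$ built out of its holomorphic forms, and to track how that invariant degenerates under a dominant rational map to $\mathbb{P}^k$. For a smooth projective variety $X$ of dimension $n$, let $\hl(X)$ denote the length of the graded algebra ${H^{1,0}(X)\oplus\ldots\oplus H^{n,0}(X)}$, that is the largest integer $r$ for which there exist homogeneous holomorphic forms ${\omega_1,\ldots,\omega_r}$ of positive degree with ${\omega_1\wedge\ldots\wedge\omega_r\neq 0}$. First I would fix a dominant rational map ${F\colon C^{(k)}\dashrightarrow \mathbb{P}^k}$ of degree $d$ and reduce the statement to computing $\hl(C^{(k)})$ and $\hl(\mathbb{P}^k)$ together with a single inequality comparing them.

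The inequality I would invoke is that of Alzati and Pirola: for a degree $d$ dominant rational map ${F\colon X\dashrightarrow Y}$ between $n$-dimensional varieties one has ${d\,(\hl(Y)+1)\geq \hl(X)+1}$ (see \cite[Theorem 3.4]{AP}). Granting this, the proof reduces to two elementary computations. On the target, $\mathbb{P}^k$ carries no non-zero holomorphic $p$-form for ${p>0}$, so the graded algebra above is trivial and ${\hl(\mathbb{P}^k)=0}$. On the source I claim ${\hl(C^{(k)})=k}$. The bound ${\hl(C^{(k)})\leq k}$ is forced by dimension: a non-zero wedge of $r$ homogeneous forms of positive degree lies in ${H^{p,0}(C^{(k)})}$ with ${r\leq p\leq \dim C^{(k)}=k}$. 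For the reverse bound I would use ${H^{k,0}(C^{(k)})\cong\bigwedge^k H^{1,0}(C)}$ recorded in (\ref{equation CANONICAL LINEAR SERIES ON C(k)}); since ${\dim H^{1,0}(C^{(k)})=\dim H^{1,0}(C)=g\geq k}$, I can choose $k$ linearly independent holomorphic $1$-forms on $C^{(k)}$, and in the exterior algebra $\bigwedge^\bullet H^{1,0}(C)$ their wedge is a non-zero element of $H^{k,0}(C^{(k)})$, whence ${\hl(C^{(k)})\geq k}$. Substituting ${\hl(\mathbb{P}^k)=0}$ and ${\hl(C^{(k)})=k}$ into the inequality gives ${d\geq k+1}$, and minimising over all such $F$ yields ${\degree_r(C^{(k)})\geq k+1}$.

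The only genuinely non-trivial ingredient, and hence the step I expect to be the main obstacle, is the comparison inequality ${d\,(\hl(Y)+1)\geq \hl(X)+1}$ itself; the two length computations are formal once the hypothesis ${g\geq k}$ is in force, since it is precisely what produces $k$ wedge-independent $1$-forms and thus ${\hl(C^{(k)})=k}$ rather than something smaller. As I am allowed to cite \cite[Theorem 3.4]{AP}, the difficulty here is entirely packaged into that reference, whose proof combines Mumford's method of induced differentials with a linear-algebra estimate on the fibres of $F$. I would also note that a more self-contained route is conceivable through the correspondence machinery of Section \ref{section CORRESPONDENCES}: by Example \ref{example CORRESPONDENCE of RATIONAL MAP} the graph of $F$ is a null-trace correspondence of degree $d$, so by Proposition \ref{proposition LOPEZ PIROLA} a general fibre ${\{P_1,\ldots,P_d\}}$ of $\pi_1$ satisfies the Cayley--Bacharach condition with respect to $|K_{C^{(k)}}|$, and one could try to contradict ${d\leq k}$ by a general-position argument. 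I would nonetheless favour the holomorphic-length approach, because the correspondence route requires controlling the fibres of the canonical (Gauss) map of $C^{(k)}$ and becomes delicate exactly in the hyperelliptic case, whereas the length inequality is uniform in $C$.
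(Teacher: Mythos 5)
Your proposal is correct and follows essentially the same route as the paper: both invoke the Alzati--Pirola inequality ${d\,(\hl(\mathbb{P}^k)+1)\geq \hl(C^{(k)})+1}$ from \cite[Theorem 3.4]{AP} and compute ${\hl(\mathbb{P}^k)=0}$, ${\hl(C^{(k)})=k}$ via the identification ${H^{k,0}(C^{(k)})\cong\bigwedge^k H^{1,0}(C)}$. Your added detail -- that the hypothesis ${g\geq k}$ supplies $k$ wedge-independent holomorphic $1$-forms, giving the lower bound ${\hl(C^{(k)})\geq k}$ -- is exactly the content the paper leaves implicit in citing (\ref{equation CANONICAL LINEAR SERIES ON C(k)}).
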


In particular, we have that ${\degree_r(C^{(2)})\geq 3}$ for any curve of genus ${g\geq 2}$. Moreover, this estimate
turns out to be sharp when the curve has genus two. Indeed, in \cite[Theorem 3.1]{TY} has been presented an example of a genus two curve $C'$ whose Jacobian satisfies ${\degree_r(J(C'))= 3}$. As $C'^{(2)}$ maps birationally on $J(C')$ we have ${\degree_r(C'^{(2)})=3}$ as well.

\smallskip
On the other hand, to provide upper bounds on $\degree_r(C^{(2)})$ we have to present dominant rational maps ${C^{(2)}\dashrightarrow \mathbb{P}^2}$.
In the examples below we exploit the existence of linear series on $C$ in order to produce such maps. As a consequence, we achieve the upper bound on $\degree_r(C^{(2)})$ stated in Proposition \ref{proposition INTRO UPPER BOUND ON DEG IRR C(2)}.
\begin{example}\label{example SQUARE OF GONALITY}
Let ${f\colon C\longrightarrow \mathbb{P}^1}$ be a morphism of degree $d$.
Then it is always possible to define the dominant morphism ${F\colon C^{(2)}\longrightarrow \left(\mathbb{P}^1\right)^{(2)}\cong\mathbb{P}^2}$ of
degree $d^2$ given by ${p+q\longmapsto f(p)+f(q)}$.
\end{example}
\begin{example}\label{example G2d}
Suppose that $C$ admits a birational mapping ${f\colon C\longrightarrow \mathbb{P}^2}$ onto a non-degenerate curve of degree $d$.
Hence we may define a dominant rational map ${F\colon C^{(2)}\dashrightarrow \mathbb{G}(1,2)\cong\mathbb{P}^2}$ of degree ${d \choose 2}$ by sending a
point ${p+q\in C^{(2)}}$ to the line of $\mathbb{P}^2$ passing through $f(p)$ and $f(q)$.
\end{example}
\begin{example}
Let ${f\colon C\longrightarrow \mathbb{P}^3}$ be a birational map onto a non-degenerate curve of degree $d$.
Consider a plane ${H\subset \mathbb{P}^3}$ and let ${F\colon C^{(2)}\dashrightarrow H\cong\mathbb{P}^2}$ be the dominant rational map sending a point
${p+q\in C^{(2)}}$ to the intersection of $H$ with the line of $\mathbb{P}^3$ passing through $f(p)$ and $f(q)$.
We note that the degree of $F$ is ${\frac{(d-1)(d-2)}{2}-g}$.
To see this fact, notice that the degree of $F$ is the number of bi-secant line to $f(C)$ passing through a general point ${y\in H}$, and consider the
projection ${\pi_y\colon f(C)\longrightarrow \mathbb{P}^2}$.
As the number of such bi-secant lines equals the number of nodes of the image ${C':=(\pi_y\circ f)(C)}$, and $C'$ is a curve of degree $d$ on
$\mathbb{P}^2$, we conclude that ${\deg F=p_a(C')-g(C')=\frac{(d-1)(d-2)}{2}-g}$.
\end{example}

If $C$ is assumed to be hyperelliptic, by Propositions \ref{proposition INTRO UPPER BOUND ON DEG IRR C(2)} and \ref{proposition ALZATI PIROLA} we have that $\degree_r(C^{(2)})$ is either $3$ or $4$. We mentioned above an example of hyperelliptic curve of genus two with $\degree_r(C^{(2)})=3$. When the genus of $C$ is ${g\geq 4}$ this is no longer possible and Theorem \ref{theorem INTRO DEGIRR HYPERELLIPTIC} asserts that the degree of irrationality of hyperelliptic curves is exactly $4$. In particular, the map on $\mathbb{P}^2$ reaching the minimum degree is the morphism described in Example \ref{example SQUARE OF GONALITY}.

\smallskip
When the curve $C$ is non-hyperelliptic the situation is more subtle and we are not able to compute the precise value of the degree of irrationality of $C^{(2)}$.
However Theorem \ref{theorem INTRO DEGIRR NON-HYPERELLIPTIC} provides several lower bounds in dependence on the genus of the curve.
Notice that for any curve of genus $4\leq g\leq 7$ we have $\degree_r(C^{(2)})\geq g-1$.
As the following examples show, this fails to be true for larger values of $g$ and it seems to happen when $C$ covers certain curves.
\begin{example}\label{example DEGREE OF IRRATIONALITY BIELLIPTIC CURVES}
For an integer $d\geq 2$, let $C$ be a non-hyperelliptic curve of genus ${g\geq 2d^2+2}$ provided of a degree $d$ covering ${f\colon C\longrightarrow
E}$ on an elliptic curve $E$ (a particular case of this setting is given by bielliptic curves of genus greater than 9).
Then we can define the dominant morphism ${C^{(2)}\longrightarrow E^{(2)}}$ of degree $d^2$ sending the point ${p+q\in C^{(2)}}$ to ${f(p)+f(q)\in
E^{(2)}}$.
As we saw at the beginning of this section, ${\degree_r(E^{(2)})=2}$ and there exists a dominant rational map ${E^{(2)}\dashrightarrow \mathbb{P}^2}$
of degree $2$.
Therefore we obtain by composition a dominant rational map ${C^{(2)}\dashrightarrow \mathbb{P}^2}$ of degree $2d^2$.
Thus ${\degree_r(C^{(2)})\leq 2d^2<g-1}$.
\end{example}

\begin{example}
Let $C'$ be the genus two curve of \cite[Theorem 3.1]{TY} we mentioned above and suppose that $C$ is a non-hyperelliptic curve of genus ${g\geq 3d^2+2}$ admitting a degree $d$ covering of $C'$.
Since ${\degree_r(C'^{(2)})=3}$ and $C^{(2)}$ admits a covering of degree $d^2$ of $C'^{(2)}$, it is immediate to check that ${\degree_r(C^{(2)})\leq 3d^2<g-1}$.
\end{example}

On the other hand, when $C$ is assumed to be very general in the moduli space $\mathcal{M}_g$, we have ${\degree_r(C^{(2)})\geq g-1}$ for any genus ${g\geq 4}$, as we stated in Theorem \ref{theorem INTRO DEGIRR VERY GENERAL CURVES}.

\begin{remark}
Let $C$ be a very general curve of genus $g\geq 3$.
The dominant rational map ${C^{(2)}\dashrightarrow \mathbb{P}^2}$ of minimum degree we are able to construct is one of those we used to establish Proposition \ref{proposition INTRO UPPER BOUND ON DEG IRR C(2)}.
As in the proposition, let $\delta_1$ be the gonality of $C$ and for any $m\geq 2$, let $\delta_m$ be the minimum of the integers $d$ such that $C$ admits a birational mapping onto a non-degenerate curve of degree $d$ in $\mathbb{P}^{m}$.
The value of $\delta_m$ can be easily computed using Brill-Noether number and - except for finitely many genera - the map of minimum degree is the one using $g^2_d$'s in Example \ref{example G2d}.
We note further that we can construct analogously a dominant rational map ${C^{(k)}\dashrightarrow \mathbb{P}^k}$ of degree ${\delta_k \choose k}$ by using $g^k_d$'s on $C$.
Then we do not expect the bound in Theorem \ref{theorem INTRO DEGIRR VERY GENERAL CURVES} to be sharp, and we conjecture that - except for finitely many genera - the degree of irrationality of symmetric products of a generic curve $C$ of genus $g$ is ${\degree_r(C^{(k)})={\delta_k \choose k}}$, for any ${1\leq k\leq g-1}$.
\end{remark}

\smallskip
Now, in order to prove the main theorems on this topic, we fix some piece of notation and we state three preliminary lemmas.

By $F\colon C^{(2)}\dashrightarrow \mathbb{P}^2$ we denote hereafter a dominant rational map of minimal degree, that is ${d:=\deg
F=\degree_r(C^{(2)})}$.
Given a point ${y\in \mathbb{P}^2}$, we consider its fiber
\begin{equation}\label{equation FIBER OF y}
F^{-1}(y)=\left\{p_1+p_2,\ldots,p_{2d-1}+p_{2d}\right\}\subset C^{(2)}
\end{equation}
and we define the \emph{divisor} $D_y\in Div(C)$ \emph{associated to} $y$ as
\begin{equation}\label{equation DIVISOR ASSOCIATED TO THE FIBER}
D_y:=p_1+p_2+\ldots+p_{2d-1}+p_{2d}\,.
\end{equation}
Then, by a simple monodromy argument we have the following.
\begin{lemma}\label{lemma MONODROMY OF THE FIBER}
There exists an integer ${1\leq a\leq d}$ such that for a generic point ${y\in \mathbb{P}^2}$, we have
${\mult_{p_j}(D_y)=a}$ for any ${j=1,\ldots,2d}$.\\
In particular, the divisor $D_y$ defined above has the form ${D_y=a\left(q_1+q_2+\ldots+q_m\right)}$, where ${m=\frac{2d}{a}}$ and the $q_j$'s are
distinct point of $C$.
\begin{proof}
Let ${G\colon C\times C\dashrightarrow \mathbb{P}^2}$ be the dominant rational map of degree $2d$ defined as ${G(p,q):=F(p+q)\in \mathbb{P}^2}$. Given
a generic point $y\in \mathbb{P}^2$, let
\begin{displaymath}
G^{-1}(y)=\left\{(p_1,p_2),(p_2,p_1),\ldots,(p_{2d-1},p_{2d}),(p_{2d},p_{2d-1})\right\}\subset C\times C
\end{displaymath}
be its fiber.
Then the divisor $D_y:=p_1+\ldots+p_{2d}$ is uniquely determined by the fiber $G^{-1}(y)$.
Moreover, if $m$ is the number of distinct points of $\{p_1,\ldots,p_{2d}\}$ and we denote by $q_1,\ldots,q_m$ these points, the divisor associated to
$y$ has the form $D_y=\sum_{j=1}^m a_j\, q_j$, for some positive integers $a_j:=\mult_{q_j}(D_y)$.
Therefore we have to prove that ${a_1=\ldots=a_m}$.

As $C\times C$ is a connected surface, the action of the monodromy group ${M\left(G\right)\subset S_{2d}}$ of $G$ is transitive.
Hence it is not possible to distinguish any point of the fiber $G^{-1}(y)$ from another.
Then for any ${(r,s),(v,w)\in G^{-1}(y)}$ we have that ${\mult_{r}(D_y)=\mult_{v}(D_y)}$ and ${\mult_{s}(D_y)=\mult_{w}(D_y)}$.
In particular, we can not distinguish the points $(r,s)$ and $(s,r)$, hence ${\mult_{r}(D_y)=\mult_{s}(D_y)}$.
Thus the divisor $D_y$ must have the same multiplicity at any $p_i$, i.e. there exists an integer ${1\leq a\leq 2d}$ such that ${a=\mult_{p_i}(D_y)}$
for any ${i=1,\ldots,2d}$.
Furthermore, $a$ must divide $2d$ and the number of distinct $p_i$'s is ${m=\frac{2d}{a}}$. Finally, being $y$ generic on $\mathbb{P}^2$, we have that
the number of distinct $p_i$'s is at least $2$. Hence ${m\geq 2}$ and ${a\leq d}$.
\end{proof}
\end{lemma}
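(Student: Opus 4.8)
The plan is to lift the entire problem from the symmetric product $C^{(2)}$ to the ordinary product $C\times C$, where the monodromy heuristic recalled in Section~\ref{section PRELIMINARIES} applies cleanly. First I would introduce the dominant rational map $G\colon C\times C\dashrightarrow \mathbb{P}^2$ given by $G(p,q):=F(p+q)$. Since the quotient map $C\times C\longrightarrow C^{(2)}$ has degree $2$ and $\deg F=d$, the map $G$ has degree $2d$, and its fiber over a generic $y$ is the set of $2d$ ordered pairs $(p_{2i-1},p_{2i}),(p_{2i},p_{2i-1})$ obtained by ordering the unordered pairs in $F^{-1}(y)$. The crucial structural features to record here are that this fiber is \emph{symmetric}, i.e. it contains $(r,s)$ if and only if it contains $(s,r)$, and that the divisor $D_y=p_1+\ldots+p_{2d}$ is entirely determined by it.

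Next I would invoke the irreducibility of $C\times C$ to conclude that the monodromy group $M(G)\subset S_{2d}$ acts transitively on the generic fiber $G^{-1}(y)$. The key observation is that the multiplicity $\mult_{r}(D_y)$ of the first coordinate of a fiber point $(r,s)$ is exactly a property of the kind discussed in the Preliminaries: as $y$ moves along a loop and the fiber points are continued analytically, this multiplicity is preserved. Transitivity then forces all first coordinates to have equal multiplicity in $D_y$. Combining this with the symmetry of the fiber — transitivity yields a monodromy element carrying $(r,s)$ to $(s,r)$, whence $\mult_r(D_y)=\mult_s(D_y)$ — I would conclude that there is a single integer $a$ with $\mult_{p_i}(D_y)=a$ for every $i$.

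From here the divisorial conclusion is immediate: writing $q_1,\ldots,q_m$ for the distinct points of $\Supp D_y$, the equal-multiplicity property gives $D_y=a(q_1+\ldots+q_m)$, and comparing degrees yields $am=2d$, so that $a\mid 2d$ and $m=2d/a$. For the numerical bounds, $a\geq 1$ is clear, while $m\geq 2$ follows from genericity: since $\degree_r(C^{(2)})\geq 3$ we have $d\geq 2$, so $F^{-1}(y)$ consists of at least two distinct points of $C^{(2)}$; were $\Supp D_y$ a single point $q_1$, every pair would equal $q_1+q_1$ and the fiber would reduce to the single point $2q_1$, a contradiction. Hence $m\geq 2$ and $a=2d/m\leq d$, as required.

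The step I expect to be the main obstacle is the second one: turning the word ``indistinguishable'' into a rigorous invariance statement. Concretely, one must verify that over a suitable Zariski-open $U\subset\mathbb{P}^2$ the $2d$ branches of $G$ are holomorphic and that the assignment $(r,s)\mapsto \mult_r(D_y)$ is locally constant along these branches, so that it is genuinely preserved by the monodromy action and transitivity can be brought to bear. Once this invariance is set up correctly, the symmetry of the fiber and the transitivity of $M(G)$ combine to give the equality of all multiplicities with no further difficulty.
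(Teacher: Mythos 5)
Your proposal is correct and follows essentially the same route as the paper: lifting $F$ to the degree-$2d$ map $G(p,q)=F(p+q)$ on $C\times C$, using irreducibility to get transitivity of $M(G)$ on the generic fiber, exploiting the symmetry of the fiber under $(r,s)\mapsto(s,r)$ to equalize the multiplicities of both coordinates, and then deducing $D_y=a(q_1+\ldots+q_m)$ with $am=2d$ and $m\geq 2$ by genericity. The technical point you flag (local constancy of the coordinate-multiplicity along branches over a suitable Zariski-open set) is exactly the invariance the paper treats via its monodromy discussion in the Preliminaries, so there is no substantive divergence.
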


The second lemma is a consequence of Abel's theorem.
\begin{lemma}\label{lemma LINEAR SERIES GIVEN BY THE FIBER}
With the notation above, for a generic point ${y\in \mathbb{P}^2}$ with associate divisor ${D_y=a\left(q_1+q_2+\ldots+q_m\right)}$, we have that the
linear series ${|q_1+q_2+\ldots+q_m|}$ is a complete $g^r_m$ on $C$ with ${r\geq 2}$.
Moreover, the integer $a$ is lower than ${d=\deg F}$.
\begin{proof}
Thanks to the previous lemma we are able to define the rational map ${\xi\colon \mathbb{P}^2 \dashrightarrow C^{(m)}}$ sending a generic point ${y\in
\mathbb{P}^2}$ to the effective divisor ${q_1+q_2+\ldots+q_m\in C^{(m)}}$.
As the image of ${y\in \mathbb{P}^2}$ depends on its fiber via the rational dominant map ${F\colon C^{(2)}\dashrightarrow \mathbb{P}^2}$, we have that
$\xi$ is non constant.
Consider the resolution ${\widetilde{\xi}\colon R\longrightarrow C^{(m)}}$ of $\xi$ and the composition with the Abel-Jacobi map
${R\stackrel{\widetilde{\xi}}{\longrightarrow} C^{(m)}\stackrel{u}{\longrightarrow} J(C)}$, where $R$ is a rational surface.
By the universal property of Albanese morphism, the latter map factors through the Albanese variety $\Alb(R)$ of the rational surface $R$, which is
{0-dimensional}.
Hence the composition $u\circ \widetilde{\xi}$ is a constant map.
Being $\xi$ non-constant, by Abel's theorem it follows that for all the generic points ${y\in \mathbb{P}^2}$, the divisors $D_y$ are all linearly
equivalent.
Furthermore, as $y$ vary on a surface, we deduce that the complete linear series ${|q_1+q_2+\ldots+q_m|}$ has dimension ${r\geq 2}$.
To conclude, we recall that ${1\leq a\leq d}$. If $a$ were equal to $d$, then ${m=2}$ and the linear series ${|q_1+q_2|}$ would have degree $2$ and
dimension $2$.
Hence ${a<d}$.
\end{proof}
\end{lemma}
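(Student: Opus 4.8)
The plan is to convert statements about the fibers $F^{-1}(y)$ into statements about a family of linearly equivalent divisors on $C$, and then to read off the dimension of the associated linear series. First I would invoke Lemma \ref{lemma MONODROMY OF THE FIBER}, which tells us that for generic $y$ the divisor $D_y$ has the shape $a(q_1+\ldots+q_m)$ with the $q_j$ distinct, in order to define the rational map
\[
\xi\colon \mathbb{P}^2\dashrightarrow C^{(m)},\qquad y\longmapsto q_1+\ldots+q_m,
\]
sending $y$ to the reduced support of $D_y$. The constancy of the multiplicity pattern guaranteed by the monodromy lemma makes $\xi$ well defined on a dense open subset of $\mathbb{P}^2$.

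The heart of the argument is to show that $\xi$ is \emph{generically finite onto its image}. Fix an effective divisor $E=q_1+\ldots+q_m\in C^{(m)}$. Any $y\in\xi^{-1}(E)$ satisfies $D_y=aE$, so its fiber $F^{-1}(y)$ exhibits the fixed multiset $aE$ of $2d$ points as an unordered sum of $d$ pairs of points of $C$, and there are only finitely many such pairings. Since $F$ is single valued, the fibers over distinct generic points of $\mathbb{P}^2$ are pairwise disjoint, and hence realize distinct pairings; therefore $\xi^{-1}(E)$ is finite. Consequently the image of $\xi$ is a surface in $C^{(m)}$.

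Next I would resolve the indeterminacy of $\xi$ to obtain a morphism $\tilde\xi\colon R\longrightarrow C^{(m)}$ from a smooth rational surface $R$, and compose it with the Abel-Jacobi map $u\colon C^{(m)}\longrightarrow J(C)$. As $R$ is rational, its Albanese variety $\Alb(R)$ is trivial, so the universal property of the Albanese forces $u\circ\tilde\xi$ to be constant. By Abel's theorem all the divisors $\xi(y)$ are then linearly equivalent, so the two-dimensional image of $\xi$ lies inside a single complete linear series $|q_1+\ldots+q_m|\cong\mathbb{P}^r$. Comparing dimensions yields $r\geq 2$, which is the first assertion.

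To finish, the inequality $a<d$ follows from what precedes: since $1\leq a\leq d$ by Lemma \ref{lemma MONODROMY OF THE FIBER}, it suffices to rule out $a=d$, which would give $m=2$ and hence a complete $g^r_2$ with $r\geq 2$. A degree two linear series on a curve of genus $g\geq 2$ has dimension at most one, so this is impossible and $a<d$. I expect the main obstacle to be the generic finiteness of $\xi$ in the second step: the Abel-Jacobi argument by itself only places the divisors $\xi(y)$ in one $\mathbb{P}^r$, and it is the finiteness of the pairings of $aE$, together with the disjointness of the fibers of the single-valued map $F$, that guarantees the image is genuinely two-dimensional and thereby upgrades $r\geq 1$ to $r\geq 2$.
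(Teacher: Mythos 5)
Your proof is correct and follows essentially the same route as the paper: the monodromy lemma is used to define $\xi\colon\mathbb{P}^2\dashrightarrow C^{(m)}$, the Albanese triviality of a rational surface plus Abel's theorem forces all the divisors $q_1+\ldots+q_m$ into one complete linear series, the two-dimensionality of the image gives $r\geq 2$, and $a=d$ is excluded because a complete $g^2_2$ cannot exist on a curve of genus $g\geq 2$. In fact your explicit verification that $\xi$ is generically finite onto its image (finitely many ways to split the fixed multiset $aE$ into $d$ pairs, combined with the disjointness of the fibres of $F$) supplies the justification that the paper compresses into the phrase ``as $y$ vary on a surface,'' so your write-up is, if anything, more complete on that point.
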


Finally, the third lemma is an immediate consequence of Theorem \ref{theorem CORRESPONDENCES ON C(k)} on correspondences with null trace on symmetric
products of curves.
\begin{lemma}\label{lemma DISTINCT POINTS}
Let $C$ be a non-hyperelliptic curve of genus ${g\geq 5}$ and let ${d=\deg F<g-1}$. Then for a generic point ${y\in \mathbb{P}^2}$, we have that the
points ${p_1,\ldots,p_{2d}\in C}$ in $\mathrm{(\ref{equation FIBER OF y})}$ and $\mathrm{(\ref{equation DIVISOR ASSOCIATED TO THE FIBER})}$ are not
distinct, that is ${a\neq 1}$.
\begin{proof}
Let ${D_y=p_1+\ldots+p_{2d}}$ be the divisor associate to a generic point $y\in \mathbb{P}^2$. By contradiction, suppose that ${p_1,\ldots,p_{2d}}$ are
distinct points of $C$.
Let us consider the graph of the rational map ${F\colon C^{(2)}\dashrightarrow \mathbb{P}^2}$,
\begin{equation}\label{equation CORRESPONDENCE}
\Gamma:=\overline{\left\{(y,p+q)\in \mathbb{P}^2\times C^{(2)}\,|\,F(p+q)=y\right\}},
\end{equation}
which is a correspondence with null trace on $\mathbb{P}^2\times C^{(2)}$ of degree $d=\deg F$ (cf. Example \ref{example CORRESPONDENCE of RATIONAL
MAP}).
Let ${\phi\colon C\longrightarrow \mathbb{P}^{g-1}}$ denote the canonical map of $C$ and let $\overline{\phi(D)}$ be the linear span of the points
$\phi(p_i)$'s in $\mathbb{P}^{g-1}$.
Then ${\dim \overline{\phi(D)}\leq d-1}$ by Theorem \ref{theorem CORRESPONDENCES ON C(k)}.
Thus by the geometric version of Riemann-Roch theorem we have
\begin{equation*}
\dim |D_y|=\deg D_y-1-\dim\overline{\phi(D_y)}\geq 2d-1-(d-1)=d=\frac{\deg D_y}{2}.
\end{equation*}
Therefore by Clifford's theorem we have that either $C$ is hyperelliptic, $D_y$ is zero or $D_y$ is a canonical divisor.
On one hand, the curve $C$ is assumed to be non-hyperelliptic. On the other, we have ${0< d<g-1}$ and hence ${0<\deg D_y<2g-2}$. Thus we have a
contradiction and the assertion follows.
\end{proof}
\end{lemma}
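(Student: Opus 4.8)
The plan is to argue by contradiction: assuming $a=1$, so that the fibre $F^{-1}(y)$ over a generic $y\in\mathbb{P}^2$ produces $2d$ \emph{distinct} points $p_1,\dots,p_{2d}$, I would manufacture a divisor on $C$ that violates Clifford's theorem. The first move is to repackage the minimal map $F$ as a correspondence by taking its graph
\[
\Gamma:=\overline{\left\{(y,p+q)\in\mathbb{P}^2\times C^{(2)}\mid F(p+q)=y\right\}}\subset\mathbb{P}^2\times C^{(2)},
\]
which is a degree-$d$ correspondence; by Example \ref{example CORRESPONDENCE of RATIONAL MAP} it has null trace, since $H^{2,0}(\mathbb{P}^2)$ is trivial. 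This is precisely the hypothesis needed to invoke the structural machinery of Section \ref{section CORRESPONDENCES}.

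I would then apply Theorem \ref{theorem CORRESPONDENCES ON C(k)} in the case $k=2$. Writing $\phi\colon C\to\mathbb{P}^{g-1}$ for the canonical map and $\overline{\phi(D_y)}$ for the linear span of the images $\phi(p_1),\dots,\phi(p_{2d})$, the theorem bounds this span by $\dim\overline{\phi(D_y)}\leq\big[\tfrac{2d}{2}\big]-1=d-1$. Since $C$ is non-hyperelliptic, $\phi$ is an embedding, so under the contradiction hypothesis the $2d$ images are genuinely distinct and $D_y=p_1+\dots+p_{2d}$ is a reduced divisor of degree $2d$. Feeding the span bound into the geometric version of the Riemann--Roch theorem yields
\[
\dim|D_y|=\deg D_y-1-\dim\overline{\phi(D_y)}\geq 2d-1-(d-1)=d=\frac{\deg D_y}{2}.
\]

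The final step is to observe that this inequality saturates Clifford's bound $\dim|D_y|\leq\tfrac12\deg D_y$; note that $D_y$ is forced to be special, since a non-special divisor would give $\dim|D_y|=2d-g<d$ as soon as $d<g-1$, contradicting the estimate above. Clifford's theorem then leaves only three possibilities: $D_y=0$, $D_y\in|K_C|$, or $C$ hyperelliptic. Each is excluded by hypothesis: $C$ is non-hyperelliptic, $\deg D_y=2d>0$ rules out $D_y=0$, and the assumption $d<g-1$ gives $\deg D_y=2d<2g-2$, so $D_y$ cannot be canonical. The resulting contradiction establishes $a\neq1$. I do not expect a genuine obstacle inside this lemma, since all the real work sits upstream in Theorem \ref{theorem CORRESPONDENCES ON C(k)} and its Cayley--Bacharach/special-position input; the only points demanding care are the two distinct uses of non-hyperellipticity (first to guarantee that $\phi$ is an embedding, so that distinct $p_i$ remain distinct in $\mathbb{P}^{g-1}$, and again to close the Clifford trichotomy) together with the elementary check that the degree range $2d<2g-2$ forces $D_y$ to be both nonzero and non-canonical.
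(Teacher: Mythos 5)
Your proposal is correct and follows essentially the same route as the paper's own proof: graph of $F$ as a null-trace correspondence, the span bound $\dim\overline{\phi(D_y)}\leq d-1$ from Theorem \ref{theorem CORRESPONDENCES ON C(k)} with $k=2$, geometric Riemann--Roch, and then the Clifford trichotomy excluded by non-hyperellipticity and $0<2d<2g-2$. Your added check that $D_y$ is special (so that Clifford's theorem genuinely applies) is a point the paper leaves implicit, but it is a refinement of the same argument, not a different one.
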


\smallskip
So, let us prove the main results on degree of irrationality of second symmetric products of curves.

\subsection*{Proof of Theorem \ref{theorem INTRO DEGIRR VERY GENERAL CURVES}}
Let $C$ be a very general curve of genus ${g\geq 4}$ and let us prove that ${\degree_r(C^{(2)})\geq g-1}$. When the genus of $C$ is $4$, the assertion
follows from Proposition \ref{proposition ALZATI PIROLA}.
Then let us assume that ${g\geq 5}$ and let ${F\colon C^{(2)}\dashrightarrow \mathbb{P}^2}$ be a dominant rational map of degree
${d=\degree_r(C^{(2)})}$. Aiming for a contradiction we assume $d< g-1$.

Let $y\in \mathbb{P}^2$ be a generic point and let $D_y$ be its associate divisor.
Since $C$ is a non-hyperelliptic curve and $d< g-1$, the lemmas above assure that there exists an integer ${1< a< d}$ such that
$D_y=a\left(q_1+q_2+\ldots+q_m\right)$, where $m=\frac{2d}{a}$ and the $q_j$'s are distinct points of $C$.

We claim that $a\neq 2$.
If $a$ were equal to $2$, we would have $m=d$ and arguing as in Step 4 of the proof of Theorem \ref{theorem INTRO GONALITY OF MOVING CURVES}, we could assume that the fiber over the generic $y\in \mathbb{P}^2$ has the form ${F^{-1}(y)=\{q_1+q_2,q_2+q_3,\ldots,q_d+q_1\}}$.
Hence, by fixing a generic point ${q\in C}$, we would have that for any $p\in C$, it would exist a unique point $p'\in C$ such that $F(p+q)=F(q+p')$. Thus we could define an automorphism ${\alpha_q\colon C\longrightarrow C}$ sending a point ${p\in C}$ to the unique point ${\alpha_q(p)\in C}$ such that ${F(p+q)=F(q+\alpha_q(p))}$. In particular, $\alpha_q$ would not be the identity map and we would have a
contradiction, because the only automorphism of a very general curve is the trivial one.

Then we have ${a\geq 3}$. By Lemma \ref{lemma LINEAR SERIES GIVEN BY THE FIBER}, the linear series ${|q_1+q_2+\ldots+q_m|}$ is a complete $g^r_m$ of
$C$ with ${r\geq 2}$. Therefore the variety $W^r_m(C)$ parametrizing complete linear series of degree $m$ and dimension at least $r$ is non-empty.
We recall that when $C$ is a very general curve, the dimension of $W^r_m(C)$ equals the Brill-Noether number ${\rho(g,r,m):=g-(r+1)(g-m+r)}$.
In particular, ${|q_1+q_2+\ldots+q_m|\in W^2_m(C)}$ and hence ${\rho(g,2,m)\geq 0}$.
It follows that
\begin{displaymath}
m\geq \frac{2g+6}{3}\,
\end{displaymath}
On the other hand, we have $a\geq 3$ and $d<g-1$.
Therefore
\begin{displaymath}
m=\frac{2d}{a}<\frac{2g-2}{3}
\end{displaymath}
and we get a contradiction.

\subsection*{Proof of Theorem \ref{theorem INTRO DEGIRR NON-HYPERELLIPTIC}}
Thanks to Proposition \ref{proposition ALZATI PIROLA}, we have that ${\degree_r(C^{(2)})\geq 3}$ and assertion (i) follows.

As usual, let ${F\colon C^{(2)}\dashrightarrow \mathbb{P}^2}$ be a dominant rational map of degree ${d=\degree_r(C^{(2)})}$ and for a generic point $y\in \mathbb{P}^2$, we consider the associated divisor $D_y$.
By Lemmas \ref{lemma MONODROMY OF THE FIBER} and \ref{lemma LINEAR SERIES GIVEN BY THE FIBER} we have that ${D_y=a\left(q_1+q_2+\ldots+q_m\right)}$, where ${1\leq a< d}$ and the $q_j$'s are distinct points of $C$.
Then we proceed by steps.

\smallskip
\step{1}{ $g\geq 5$ $\Rightarrow$ $d\geq 4$ }
We assume that $C$ has genus ${g\geq 5}$ and we prove that ${\degree_r(C^{(2)})\geq 4}$.
By Proposition \ref{proposition ALZATI PIROLA} we have to check that ${d\neq 3}$.
Aiming for a contradiction, we suppose that ${d=\deg F=3}$.

By Lemma \ref{lemma LINEAR SERIES GIVEN BY THE FIBER}, we have that ${|q_1+q_2+\ldots+q_m|}$ is a complete linear series on $C$ of degree $m$ and dimension ${r\geq 2}$.
As $C$ is non-hyperelliptic and ${g\geq 5}$, Martens' theorem assures that ${\dim W^r_m(C)\leq m-2r-1}$ (see \cite[p.
191]{ACGH}).
As the number of $q_j$'s is ${m=\frac{2d}{a}=\frac{6}{a}}$, we have that $W^r_m(C)$ has non-negative dimension only if ${a=1}$.

Since ${g\geq 5}$ and $d=3$, we have that $d<g-1$ and hence the integer $a$ can not be equal to 1 by Lemma \ref{lemma DISTINCT POINTS}.
Therefore we have a contradiction.
Thus $d\geq 4$ and assertion (ii) follows as a consequence.

\smallskip
\step{2}{ $g\geq 6$ $\Rightarrow$ $d\geq 5$ } We prove that $d_r(C^{(2)})\geq 5$ for any non-hyperelliptic curve $C$ of genus ${g\geq 6}$.
By the previous step, it suffices to see that $C^{(2)}$ does not admit dominant rational maps on $\mathbb{P}^2$ of degree $4$.
By contradiction, let us assume ${d=\deg F=4}$.

The argument is the very same of Step 1.
Thanks to Lemma \ref{lemma LINEAR SERIES GIVEN BY THE FIBER} and Martens' theorem, we deduce ${0\leq \dim W^r_m(C)\leq m-2r-1}$ with ${r\geq 2}$ and ${m=\frac{2d}{a}}$.
Since ${d=4}$, it follows that ${a=1}$, but this situation can not occur by Lemma \ref{lemma DISTINCT POINTS}. Then we have a contradiction and assertion (iii) holds.

\smallskip
\step{3}{ $g\geq 7$ $\Rightarrow$ $d\geq \gon(C)$ }
Suppose that $C$ has genus ${g\geq 7}$ and - by contradiction - assume ${d< \gon(C)}$.
Since ${\gon(C)\leq \left[\frac{g+3}{2}\right]<g-1}$, Lemma \ref{lemma DISTINCT POINTS} guarantees that ${a\geq 2}$ and hence ${m=\frac{2d}{a}\leq d<\gon(C)}$.
On the other hand, ${|q_1+q_2+\ldots+q_m|}$ is a complete linear series on $C$ of degree $m$, thus ${m\geq \gon(C)}$.
Then we have a contradiction.

\smallskip
\step{4}{ $g\geq 7$ $\Rightarrow$ $d\geq 6$ } To conclude, we assume that $C$ has genus ${g\geq 7}$ and we prove that ${\degree_r(C^{(2)})\geq 6}$.
Thanks to Step 2, we have to show that the degree of irrationality of $C^{(2)}$ is different from $5$. Again we argue by contradiction and we suppose ${d=\deg F=5}$.

As above, the inequality ${0\leq dim\,W^r_m(C)\leq m-2r-1}$ holds, where ${r\geq 2}$ and ${m=\frac{10}{a}}$.
In this situation, the only possibilities are ${a=1}$ and ${a=2}$.
The integer $a$ must differ from $1$ by Lemma \ref{lemma DISTINCT POINTS}.
So, let us suppose that $a=2$.
Then $m=5$ and the above inequality implies $r=2$.
In particular, the linear series $|q_1+\ldots+q_5|$ is a complete $g^2_5$ on $C$.
As $m$ is prime, the map $C\longrightarrow \mathbb{P}^2$ defined by the $g^2_5$ is birational onto a non degenerate plane quintic, whose arithmetic genus is $6$.
Hence $g\leq 6$, a contradiction.

Thus assertion (iv) follows from Steps 3 and 4.

\subsection*{Proof of Theorem \ref{theorem INTRO DEGIRR HYPERELLIPTIC}}
Let $C$ be an hyperelliptic curve of genus ${g\geq 4}$ and let us prove that ${\degree_r(C^{(2)})\geq 4}$.
From Propositions \ref{proposition INTRO UPPER BOUND ON DEG IRR C(2)} and \ref{proposition ALZATI PIROLA} we deduce ${3\leq\degree_r(C^{(2)})\leq 4}$.
Aiming for a contradiction, we suppose that there exists a dominant rational map ${F\colon C^{(2)}\dashrightarrow \mathbb{P}^2}$ of degree ${d=3}$.

Let ${y\in \mathbb{P}^2}$ be a generic point, with fiber ${F^{-1}(y)=\left\{p_1+p_2,p_3+p_4,p_5+p_6\right\}}$ and associated divisor
${D_y=p_1+\ldots+p_6}$.
Let ${G\colon C\times C\dashrightarrow \mathbb{P}^2}$ be the map of degree 6 defined as ${G(p,q):=F(p+q)}$, and let
${G^{-1}(y)=\left\{(p_1,p_2),(p_2,p_1),\ldots, (p_6,p_5)\right\}}$ be its fiber over $y$.
By arguing as in the proof of Lemma \ref{lemma MONODROMY OF THE FIBER}, we have that the monodromy groups $M(F)\subset S_3$ and $M(G)\subset S_6$ act transitively on $F^{-1}(y)$ and $G^{-1}(y)$ respectively, since $C^{(2)}$ and $C\times C$ are connected surfaces.
It follows that there is no way to distinguish neither the points of the fiber $F^{-1}(y)$ nor those of $G^{-1}(y)$ by some property varying
continuously as $y$ varies on $\mathbb{P}^{2}$. Thus we can not distinguish the $p_i$'s as well by such a property.

Now, let ${f\colon C\longrightarrow \mathbb{P}^1}$ be $g^1_2$ on $C$ and let ${\iota\colon C\longrightarrow C}$ be the induced hyperelliptic
involution.
We recall that the canonical map ${\phi\colon C\longrightarrow \mathbb{P}^{g-1}}$ is the composition of the double covering $f$ and the Veronese map $\nu_{g-1}\colon \mathbb{P}^{1}\longrightarrow \mathbb{P}^{g-1}$ (see e.g. \cite[Proposition 2.2 p. 204]{Mi}).
Moreover, the image $\phi(C)\subset \mathbb{P}^{g-1}$ is set-theoretically the rational normal curve of degree $g-1$ and the covering $\phi\colon C\longrightarrow \phi(C)$ has degree two.
Then two distinct points $p,q\in C$ has the same image if and only if they are conjugated under the hyperelliptic involution.
Since $y\in \mathbb{P}^2$ is generic, we can assume - without loss of generality - that $p_1$ and $p_2$ are not conjugate under the hyperelliptic involution, that is $\phi(p_1)\neq\phi(p_2)$. As the points of the $F^{-1}(y)$ are indinstinguishable, it follows that ${\phi(p_3)\neq\phi(p_4)}$ and ${\phi(p_5)\neq\phi(p_6)}$ as well.

Consider the correspondence ${\Gamma:=\overline{\left\{(y,p+q)\in \mathbb{P}^2\times C^{(2)}\,|\,F(p+q)=y\right\}}}$ defined as the graph of $F$.
Since $\Gamma\subset \mathbb{P}^2\times C^{(2)}$ has null trace and degree $3$ (cf. Example \ref{example CORRESPONDENCE of RATIONAL MAP}), Theorem \ref{theorem CORRESPONDENCES ON C(k)} assures that the points $\phi(p_1),\ldots,\phi(p_6)$ lie on a plane $\pi\subset \mathbb{P}^{g-1}$. Being $\phi(C)$ a rational normal curve, we have that the $\phi(p_i)$'s consist of at most three distinct points.

Suppose that they are exactly three.
Then the $\phi(p_i)$'s are not collinear, because they lie on $\phi(C)$.
Consider the lines ${l_1:=\overline{\phi(p_1)\phi(p_2)}}$, ${l_2:=\overline{\phi(p_3)\phi(p_4)}}$, ${l_3:=\overline{\phi(p_5)\phi(p_6)}\subset \pi}$ and notice that each of them correspond to a point of $F^{-1}(y)$.
Then we can not distinguish them and hence $l_1$, $l_2$ and $l_3$ are all distinct.
Moreover, they must intersect at a same point $p\in \pi$ (cf. Example \ref{example THREE LINES IN Pn}).
Furthermore, each $\phi(p_i)$ has exactly two the preimages on $C$ because of the monodromy of $G$. Then we can assume - without loss of generality - that ${p\neq \phi(p_1)}$ and ${\phi(p_1)=\phi(p_3)}$.
Thus $l_1$ and $l_2$ must coincide, a contradiction.

So, let us suppose that the $\phi(p_i)$'s consist of two distinct points.
By Lemmas \ref{lemma MONODROMY OF THE FIBER} and \ref{lemma LINEAR SERIES GIVEN BY THE FIBER} there exists ${a=1,2}$ such that the divisor associated to $y$ has the form ${D_y=a(q_1+\ldots+q_m)}$, where ${m=\frac{2d}{a}}$ and the $q_j$'s are distinct point of $C$. If $a=2$ we have $m=3$.
Hence there are two points $q_1,q_2$ mapping on $\phi(p_1)$ and $q_3$ on $\phi(p_2)$, but this situation cannot occur because we are distinguishing points.
On the other hand, suppose that $a=1$ and $m=6$.
As both $\phi(p_1)$ and $\phi(p_2)$ has two preimages on $C$, the $q_j$'s must be at most four distinct points.
Thus we have a contradiction and the assertion of Theorem \ref{theorem INTRO DEGIRR HYPERELLIPTIC} holds.

\bigskip
\section{Bounds on the ample cone of second symmetric products of curves}\label{section NEF CONE}

Let $C$ be a smooth complex projective curve of genus $g$ and let us assume that $C$ is very general in the moduli space $\mathcal{M}_g$.
In this section we apply Theorem \ref{theorem INTRO GONALITY OF MOVING CURVES} to the problem of describing the cone $Nef(C^{(2)})_{\mathbb{R}}$ of nef numerical equivalence classes of $\mathbb{R}$-divisor, and we prove Theorem \ref{theorem INTRO NEW BOUNDS}.
To this aim we firstly recall some basic facts on this topic.

\smallskip
Given a point ${p\in C}$, we define the divisors on $C^{(2)}$ given by ${C_p:=\{p+q\mid q\in C\}}$ and ${\Delta:=\{q+q\mid q\in C\}}$.
Let $x$ and $\delta$ denote their numerical equivalence classes in the N\'eron-Severi group $N^1(C^{(2)})$.
The vector space $N^1(C^{(2)})_{\mathbb{R}}$ of numerical classes of $\mathbb{R}$-divisors is spanned by the classes $x$ and $\frac{\delta}{2}$ (cf.
\cite[p. 359]{ACGH}), where ${x^2=1}$, ${\left(\frac{\delta}{2}\right)^2=1-g}$ and ${\left(x\cdot\frac{\delta}{2}\right)=1}$.
Then we deduce the formula governing the intersection on the N\'eron-Severi space, that is
\begin{equation*}
\left((a+b)x-b\frac{\delta}{2}\right)\cdot\left((c+d)x-d\frac{\delta}{2}\right)=ac-bdg.
\end{equation*}

Since $Nef(C^{(2)})_{\mathbb{R}}$ is a two-dimensional convex cone, it is completely determined by its two boundary rays. The first one is the dual ray
of the diagonal via intersection pairing and it is spanned by the class $(g-1)x-\frac{\delta}{2}$. The other ray is spanned by the class
${\left(\tau(C)+1\right)x-\frac{\delta}{2}}$, where $\tau(C)$ is the real number defined as
\begin{equation*}
\tau(C) :=\inf \left\{t>0\left|\, (t+1)x-\frac{\delta}{2} \textrm{ is ample}\right.\right\}.
\end{equation*}
Thus the problem of describing the nef cone $Nef(C^{(2)})_{\mathbb{R}}$ is reduced to compute $\tau(C)$. Clearly, as the self intersection of an ample
divisor is positive, it follows $\tau(C)\geq \sqrt{g}$.

\begin{remark}
We note that when the genus of $C$ is $g\leq 4$, the problem is totally understood (for details see \cite{CK, K, R}).
On the other hand, there is an important conjecture -~due to Kouvidakis~- governing the case $g\geq 5$. It asserts that $\tau(C)= \sqrt{g}$, i.e. the nef cone is as large as
possible. Such a conjecture has been proved in \cite{CK, K} when the genus $g$ is a perfect square, whereas the problem is still open in the other cases.\\
We recall further that when the genus of $C$ is $g\geq 9$, Kouvidakis' conjecture is implied by Nagata's one on the Seshadri constant at $g$ generic points in $\mathbb{P}^2$, and this fact leads to several bounds on $\tau(C)$ (see for instance \cite{R}).
On the other hand, the best previously known bounds for generic curves of genus ${5\leq g \leq 8}$ are those of \cite[Theorem 1]{B}.
\end{remark}

Moreover, we would like to note that the bounds of \cite[Theorem 1]{B} are ${\tau_5=\frac{9}{4}}$, ${\tau_6= \frac{37}{15}}$, ${\tau_7=\frac{189}{71}}$ and ${\tau_8=\frac{54}{19}}$.
Since ${\frac{32}{13}<\frac{37}{15}}$, ${\frac{77}{29}<\frac{189}{71}}$ and ${\frac{17}{6}<\frac{54}{19}}$, we deduce that Theorem \ref{theorem INTRO NEW BOUNDS} does provide an improvement of the bounds on the ample cone of $C^{(2)}$ when ${6\leq g \leq 8}$.

\smallskip
As we anticipated, the argument to prove Theorem \ref{theorem INTRO NEW BOUNDS} is the very same of \cite[Theorem 1]{B}.
Then let us recall two preliminary results involved in the proof.

Given a smooth complex projective variety $X$ and a nef class $L\in N^1(X)_{\mathbb{R}}$, we define the \emph{Seshadri constant} of $L$ at a point
$y\in X$ as the real number
\begin{equation*}
\epsilon \left( y; X, L \right) := \inf_E \frac{(L\cdot E)}{mult_y E}\,,
\end{equation*}
where the infimum is taken over the irreducible curves $E$ passing through $y$. The following holds (see \cite[Theorem 1.2]{R}).
\begin{theorem}\label{theorem ROSS}
Let $D$ be a smooth curve of genus $g-1$. Let $a,b$ be two positive real numbers such that $\frac{a}{b}\geq \tau(D)$ and for a very general point $y\in D^{(2)}$
\begin{equation*}
\epsilon \left( y; D^{(2)}, (a+b)x-b\frac{\delta}{2} \right) \geq b.
\end{equation*}
If $C$ is a very general curve of genus $g$, then $\tau(C)\leq \frac{a}{b}$.
\end{theorem}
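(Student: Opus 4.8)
The plan is to prove that the class $(a+b)x-b\frac{\delta}{2}$ is nef on $C^{(2)}$ by degenerating the genus $g$ curve to a nodal curve whose normalisation is $D$, and then running a specialisation-of-curves argument. Concretely, I would take a flat family $\mathcal{C}\longrightarrow B$ over a smooth affine curve $B$ with a distinguished point $0\in B$, whose general fibre $C_t$ is smooth of genus $g$ and whose special fibre is the irreducible one-nodal curve $C_0=D/(p\sim q)$ obtained by gluing two general points $p,q\in D$; such families exist because these nodal curves smooth out and fill the boundary divisor of irreducible one-nodal curves in $\overline{\mathcal{M}}_g$. Writing $x_D,\delta_D$ for the analogues on $D^{(2)}$ of the classes $x,\delta$, and $L_D:=(a+b)x_D-b\frac{\delta_D}{2}$, the two hypotheses translate cleanly. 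The inequality $\frac{a}{b}\geq\tau(D)$ says exactly that $L_D$ is nef on $D^{(2)}$, while the Seshadri estimate at $y=p+q$ says that $\pi^{*}L_D-bE$ is nef on the blow-up $\widetilde{D^{(2)}}$ of $D^{(2)}$ at $y$, with $\pi$ the blow-up and $E$ its exceptional curve; indeed, for a nef class one has $\epsilon(y;D^{(2)},L_D)=\max\{s\geq 0\mid \pi^{*}L_D-sE\text{ is nef}\}$, so the two hypotheses are used precisely, and non-redundantly, to guarantee nefness of $\pi^{*}L_D-bE$.

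The geometric core is to produce a good flat model $\mathcal{Y}\longrightarrow B$ of the relative second symmetric product $(\mathcal{C}\times_B\mathcal{C})/S_2$, with general fibre $C_t^{(2)}$ and central fibre identified with $\widetilde{D^{(2)}}$: near the node the total space $\mathcal{C}\times_B\mathcal{C}$ acquires a threefold ordinary double point locally of type $\{xy=zw\}$, and a small resolution (semistable reduction) replaces it by the vanishing $\mathbb{P}^1=E$, which is exactly the exceptional curve of $\widetilde{D^{(2)}}\to D^{(2)}$ sitting over the diagonal point $2n$ that is the image of $p+q$. I would then match the N\'eron--Severi classes under specialisation: using $x_D^{2}=1$, $(\delta_D/2)^{2}=2-g$, $(x_D\cdot\delta_D/2)=1$, $E^{2}=-1$ and $\pi^{*}(\cdot)\cdot E=0$, the assignment $x\mapsto\pi^{*}x_D$ and $\frac{\delta}{2}\mapsto\pi^{*}\frac{\delta_D}{2}+E$ preserves the three numbers $x^{2}=1$, $(\delta/2)^{2}=1-g$ and $(x\cdot\delta/2)=1$, so that
\begin{equation*}
(a+b)x-b\tfrac{\delta}{2}\ \longmapsto\ \pi^{*}L_D-bE ,
\end{equation*}
which is nef by the previous paragraph. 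As a numerical consistency check both sides have self-intersection $a^{2}-b^{2}g$: the left from the intersection formula of the paper, and the right from $(\pi^{*}L_D-bE)^{2}=L_D^{2}-b^{2}=\big(a^{2}-b^{2}(g-1)\big)-b^{2}$.

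With the model in hand the conclusion follows by specialising a hypothetical negative curve. Suppose, for contradiction, that $(a+b)x-b\frac{\delta}{2}$ is not nef on the geometric generic fibre $C_{\eta}^{(2)}$; then there is an irreducible curve $\Gamma_{\eta}$ with $((a+b)x-b\frac{\delta}{2})\cdot\Gamma_{\eta}<0$. Let $\mathcal{L}$ be the class on $\mathcal{Y}$ restricting to $(a+b)x-b\frac{\delta}{2}$ on general fibres; the closure of $\Gamma_{\eta}$ meets the central fibre in an effective $1$-cycle $\Gamma_{0}\subset\widetilde{D^{(2)}}$, and conservation of intersection numbers in the flat family gives
\begin{equation*}
\left((a+b)x-b\tfrac{\delta}{2}\right)\cdot\Gamma_{\eta}\ =\ \mathcal{L}\cdot\Gamma_{\eta}\ =\ (\pi^{*}L_D-bE)\cdot\Gamma_{0}\ \geq\ 0 ,
\end{equation*}
the last inequality because $\pi^{*}L_D-bE$ is nef and $\Gamma_{0}$ effective. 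This contradicts the strict negativity, so $(a+b)x-b\frac{\delta}{2}$ is nef on $C_{\eta}^{(2)}$ and $\tau(C_{\eta})\leq\frac{a}{b}$. Finally, since ampleness is an open condition in families, $\tau$ is upper semicontinuous on $\mathcal{M}_g$, hence its very general value equals its minimum and is at most $\tau(C_{\eta})\leq\frac{a}{b}$; this is the assertion for a very general curve of genus $g$.

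The two delicate points are where the real work lies. First, the construction of $\mathcal{Y}\to B$ and the precise identification of its central fibre with $\widetilde{D^{(2)}}$, including the behaviour of the vanishing cycle $E$: one must control the smoothing of the node, the resulting threefold double point, and the fact that $E$ appears with multiplicity one inside the limit of the diagonal (the self-intersection computation above is the quantitative shadow of this analysis, and is what pins down the coefficients in the class correspondence). Second, one must justify that a hypothetical negative curve on the generic fibre genuinely spreads out over $B$ and specialises to an effective cycle on the central fibre so that intersection numbers can be compared; here the fact that $N^{1}(C^{(2)})$ is a rank-two lattice, together with the resulting boundedness of the numerical classes of candidate negative curves, makes the family of counterexamples bounded and allows the limit to be taken. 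I expect the first of these — the model and the vanishing-cycle bookkeeping — to be the main obstacle.
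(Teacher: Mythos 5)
A preliminary remark: the paper contains no proof of this statement to compare yours against --- it is quoted verbatim from Ross \cite[Theorem 1.2]{R}, and the paper simply cites it. Judged against Ross's actual argument in the literature, your proposal reconstructs it faithfully: Ross, following the degeneration technique of \cite{EL} as implemented for symmetric products by Ciliberto--Kouvidakis \cite{CK}, degenerates $C$ to the irreducible one-nodal curve $D/(p\sim q)$, identifies the central fibre of a suitable model of the relative second symmetric product with the blow-up of $D^{(2)}$ at $p+q$, observes that $(a+b)x-b\frac{\delta}{2}$ specializes to $\pi^{*}L_D-bE$, and obtains nefness on very general fibres by specializing a hypothetical negative curve --- exactly your outline, including the clean translation of the two hypotheses into nefness of $L_D$ and of $\pi^{*}L_D-bE$ (via $\epsilon(y;D^{(2)},L_D)\geq b$ for the nef class $L_D$), and your final passage from ``very general fibre of one family'' to ``very general point of $\mathcal{M}_g$'' via openness of ampleness and a countable-union argument is sound.

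The one substantive soft spot is precisely where you locate ``the real work'', and it deserves a sharper warning: your numerical consistency check does \emph{not} determine the specialization map. The assignment $x\mapsto\pi^{*}x_D$, $\frac{\delta}{2}\mapsto\pi^{*}\frac{\delta_D}{2}-E$ preserves the same three intersection numbers ($1$, $1-g$, $1$, using $(\delta_D/2)^2=2-g$ and $E^2=-1$), and with that sign the class would specialize to $\pi^{*}L_D+bE$, which meets $E$ negatively, so the theorem would not follow. What pins down the sign is genuinely geometric: the flat limit of the diagonals $\Delta_t$ contains the vanishing $\mathbb{P}^1$ with multiplicity two, a local computation at the threefold ordinary double point of $\mathcal{C}\times_B\mathcal{C}$ carried out in \cite{CK}; note that the positive coefficient cannot come from $\Delta_D$ itself, since $p+q\notin\Delta_D$ (as $p\neq q$) and hence the proper transform of $\Delta_D$ involves no $E$. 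Relatedly, the centre of the blow-up is $p+q\in D^{(2)}$, which is \emph{not} a diagonal point of $D^{(2)}$; your phrase about $E$ ``sitting over the diagonal point $2n$'' refers to the image in $C_0^{(2)}$ and should not be conflated with the point blown up. Finally, to invoke the Seshadri hypothesis at $y=p+q$ you must choose the gluing so that $p+q$ is a \emph{very general} point of $D^{(2)}$ --- harmless, since the very general locus is dense and the gluing points are at your disposal, but it should be said. With these points made explicit (or delegated to \cite{CK}, as Ross does), your argument is complete and is essentially the proof in the literature.
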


Moreover, we need the following lemma (cf. \cite[Lemma 3]{B} and \cite[Theorem A]{KSS}).
\begin{lemma}\label{LEMMA el + gonality}
Let $X$ be a smooth complex projective surface.
Let $T$ be a smooth variety and consider a family $\{y_t\in E_t\}_{t\in T}$ consisting of a curve $E_t\subset Y$ through a very general point $y_t\in
X$ such that $\mult_{y_t}E_t\geq m$ for any $t\in T$  and for some $m\geq 2$.
If the central fiber $E_0$ is a reduced irreducible curve and the family is non-trivial, then
\begin{equation*}
E_0^2\geq m(m-1)+ \gon(E_0).
\end{equation*}
\end{lemma}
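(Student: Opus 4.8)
The plan is to realise the intersection number $E_0^2$ as an intersection on the total space of the family, so that the $m$-fold point produces a contribution of $m(m-1)$ while the remaining intersection is forced to sweep out a pencil on $E_0$ of degree at least $\gon(E_0)$. This is the Ein--Lazarsfeld differentiation method (cf. \cite{EL}), refined to keep track of the gonality as in \cite{KSS}.

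First I would reduce to a one--dimensional base. Since $y_t$ is a very general point, the assignment $t\mapsto y_t$ dominates $X$; cutting $T$ with general hyperplane sections I may assume $T$ is a smooth curve along which $y_t$ still moves, while $E_0$ stays reduced and irreducible and $\mult_{y_t}E_t\geq m$ persists. Compactifying and passing to the family of normalizations, I obtain a smooth projective surface $S$ together with a fibration $p\colon S\longrightarrow B$ over a smooth curve $B$ whose general fibre $F$ is the normalization $\widetilde{E}_0$, an evaluation morphism $f\colon S\longrightarrow X$ which is generically finite onto $X$ (the curves $E_t$ cover $X$ by non--triviality), and a multisection $\Sigma\subset S$ collecting the preimages of the marked points $y_t$. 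The crucial feature is that, because the point genuinely moves, $f$ does \emph{not} contract $\Sigma$: it maps $\Sigma$ onto the curve traced out by the $y_t$'s.

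Next I would compute $E_0^2$ on $S$. Writing $f^{*}E_0=F+R$ for the residual divisor $R\subset S$ and using $F^2=0$ together with the projection formula $f^{*}E_0\cdot F=E_0\cdot f_{*}F=E_0^2$, I obtain the identity $E_0^2=R\cdot F$. I then localise $R\cdot F$ along $\Sigma$. Over the $m$--fold point $y_0$ the fibre $F$ carries the points $q_1,\dots,q_m$ corresponding to the local branches of $E_0$ at $y_0$, and near each $q_i$ the divisor $R$ consists of the preimages of the remaining $m-1$ branches; hence $(R\cdot F)_{q_i}\geq m-1$ and the total contribution of $f^{-1}(y_0)$ is at least $m(m-1)$. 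I expect this local estimate to be the main obstacle: it is exactly here that $\mult_{y_t}E_t\geq m$ is converted into intersection multiplicity, and it is essential that $f$ does not contract $\Sigma$ (the moving case), since for a fixed base point $f$ would contract $\Sigma$ and the count would instead produce $m^2$.

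Finally, the residual intersection $R\cdot F-m(m-1)$, supported away from $f^{-1}(y_0)$, is the degree of the $0$--cycle cut on $E_0$ by the neighbouring members of the family; as the member varies over $B$ these $0$--cycles move, and non--triviality guarantees that they move nontrivially. After discarding base points this gives a base--point--free pencil on $\widetilde{E}_0$, i.e. a $g^{1}_{d}$ with $d\le E_0^2-m(m-1)$, whence $\gon(E_0)\le d\le E_0^2-m(m-1)$, which rearranges to $E_0^2\geq m(m-1)+\gon(E_0)$. The delicate point in this last step will be to check that the residual $0$--cycles are linearly equivalent, so that they define an honest pencil and not merely a covering family (a $1$--parameter family of divisors covering a curve does \emph{not} by itself bound the gonality); this is where the very--generality of $y_t$, allowing one to take the relevant parameter rational, is used, and is handled in \cite{KSS}.
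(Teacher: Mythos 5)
First, a caveat on the comparison: the paper does not prove Lemma \ref{LEMMA el + gonality} at all --- it quotes it from \cite[Lemma 3]{B} and \cite[Theorem A]{KSS} --- so your proposal must stand on its own. Its skeleton (a concentration of degree $m(m-1)$ over $y_0$ plus a residual divisor bounding the gonality, in the spirit of \cite{EL} and \cite{KSS}) is indeed the right one, but both steps you yourself flag as delicate are carried out by mechanisms that fail. The local estimate is the first genuine gap. Your count --- ``near each $q_i$ the divisor $R$ consists of the preimages of the remaining $m-1$ branches'' --- is a \emph{static} count on the singularity of $E_0$ and tacitly assumes that $y_0$ is an \emph{ordinary} $m$-fold point. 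In general $E_0$ has $r$ branches at $y_0$ of multiplicities $m_1,\ldots,m_r$ with $\sum_i m_i=\mult_{y_0}E_0\geq m$ (and $r$ can be as small as $1$), and the branch count yields only $\sum_{i\neq j}m_im_j$: this equals $m(m-1)$ for an ordinary point, but is strictly smaller for non-ordinary singularities and is $0$ for a unibranch point (e.g.\ a cusp, $m=2$), where there are no ``remaining branches'' at all. The correct mechanism is dynamic, namely Ein--Lazarsfeld differentiation: the first-order deformation of the family is a nonzero section $s$ of $\mathcal{O}_X(E_0)|_{E_0}$, and differentiating in $t$ the equations $\partial^{\alpha}F(y_t,t)=0$, $|\alpha|\leq m-1$, which express $\mult_{y_t}E_t\geq m$ (the terms involving $\dot y_t$ vanish against the order-$\geq m$ vanishing of $E_0$ at $y_0$), shows that $s\in\mathfrak{m}_{y_0}^{m-1}\cdot\mathcal{O}_X(E_0)|_{E_0}$; pulling back to the normalization, $\nu^{*}s$ vanishes at $q_i$ to order at least $(m-1)m_i$, hence to total order at least $(m-1)\mult_{y_0}E_0\geq m(m-1)$ out of $\deg Z(\nu^{*}s)=E_0^2$. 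It is here, and only here, that the hypothesis on the moving $m$-fold point is converted into intersection multiplicity.

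The pencil step is the second genuine gap, and your very first reduction makes it unrepairable: cutting $T$ down to a curve destroys exactly the structure the gonality term requires. A one-parameter family produces a \emph{single} deformation section $s$, hence a single residual divisor, not a pencil; moreover the divisors $\nu^{*}(E_t|_{E_0})$ for $t\neq 0$ do not contain the part of the limit divisor concentrated over $y_0$ (in general $y_0\notin E_t$), so even if one knew they were linearly equivalent, ``discarding base points'' would not lower the degree by $m(m-1)$. The very-generality of the $y_t$ must instead be used to retain a \emph{two-dimensional} family of pointed curves, i.e.\ marked points dominating $X$: then two independent tangent vectors $v_1,v_2$ at a general $t_0\in T$ give, since the deformation section depends linearly on the tangent direction, two linearly independent sections $s_{v_1},s_{v_2}$ of $\nu^{*}\big(\mathcal{O}_X(E_0)|_{E_0}\big)$ (independence holds because the classifying map $t\mapsto E_t$ cannot contract a curve of $T$: a fixed reduced irreducible curve has only finitely many points of multiplicity $\geq 2$, while the marked points move), each vanishing to order $\geq m-1$ over $y_0$ by the differentiation above. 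The pencil they span has base locus of degree $\geq m(m-1)$ supported over $y_0$, and removing the base locus yields a $g^1_d$ on $\widetilde{E}_0$ with $d\leq E_0^2-m(m-1)$, which is the assertion since $\gon(E_0)$ means the gonality of the normalization. Finally, your appeal to ``taking the relevant parameter rational'' to force linear equivalence conflates this lemma with the Abel-theorem argument of Lemma \ref{lemma LINEAR SERIES GIVEN BY THE FIBER}, where the base is $\mathbb{P}^2$; here $T$ is an arbitrary smooth variety, and the linearity of the pencil comes from the linear dependence of the derivative on the tangent vector, not from Abel's theorem.
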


\smallskip
\subsection*{Proof of Theorem \ref{theorem INTRO NEW BOUNDS}}
Assume that $g=6$ and let us prove that ${\tau(C)\leq \frac{32}{13}}$.
To this aim, consider a very general curve $D$ of genus ${g(D)=g-1=5}$ together with its second symmetric product $D^{(2)}$. Let ${a=32}$, ${b=13}$ and consider the numerical equivalence class \begin{equation}\label{equation L}
L:=(a+b)x-b\frac{\delta}{2}\in N^1(D^{(2)}),
\end{equation}
which is nef by \cite[Theorem 1]{B}.
Thanks to Theorem \ref{theorem ROSS} it suffices to prove that the Seshadri constant of $L$ at a generic point ${y\in D^{(2)}}$ is greater or equal than $b$, i.e. there is not a reduced irreducible curve ${E\subset D^{(2)}}$ passing through $y$ such that ${(L\cdot E)/mult_y E<b}$.

Let $\mathcal{F}$ be the set of pairs $(F,z)$ such that ${F\subset D^{(2)}}$ is a reduced irreducible curve, $z\in F$ is a point and ${(L\cdot
F)/\mult_z F<13}$.
Such a set consists of at most countably many algebraic families and $y$ is generic on $D^{(2)}$, thus we have to show that each of these families is discrete (cf. \cite[Section 2]{EL}).

We argue by contradiction and we assume that there exists a family ${\mathcal{E}=\{(y_t\in E_t)\}_{t\in T}}$ such that for any ${t\in T}$, the curve ${E_t\subset D^{(2)}}$ is reduced and irreducible, the point $y_t$ is very general on $D^{(2)}$ and
\begin{equation}\label{equation CONTRADICTION}
\frac{(L\cdot E_t)}{\mult_{y_t} E_t}<b=13.
\end{equation}

We claim that for any $t\in T$, we have
\begin{equation}\label{equation INTERSECTION}
(L\cdot E_t)\geq b.
\end{equation}
Let ${(n+\gamma)x-\gamma\frac{\delta}{2}\in N^1(D^{(2)})}$ be the numerical equivalence class of $E_t$.
Since the class $x$ is ample, we have ${(x\cdot E_t)=n>0}$. Being ${(L\cdot E_t)=an-b\gamma g}$, we then have that (\ref{equation INTERSECTION}) holds when ${\gamma\leq 0}$.\\
So, let us assume ${\gamma>0}$.
Since $\mathcal{E}$ is a family of curves covering $D^{(2)}$, we have $(L\cdot E_t)\geq 0$ (cf. \cite[Lemma 2.2]{R}).
Furthermore, $D^{(2)}$ is a non-fibred surface, hence there are at most finitely many irreducible curves of zero self intersection and numerical class $(n+\gamma)x-\gamma(\delta/2)$. Therefore we can assume ${E_t^2=n^2-(g-1)\gamma^2>0}$, that is ${n\geq \gamma\sqrt{g-1} +1}$.
Notice that ${a\geq b\sqrt{g-1}}$, thus ${(L\cdot E_t)=an-(g-1)b\gamma> b\sqrt{g-1}\left(\gamma\sqrt{g-1} +1\right)-(g-1)b\gamma>b}$
and the claim follows.

\smallskip
By (\ref{equation CONTRADICTION}) and (\ref{equation INTERSECTION}) we deduce that ${\mult_{y_t} E_t>(L\cdot E_t)/b\geq 1}$ for any $t\in T$.
As $E_t$ is a reduced curve, we have that ${\mult_{z} E_t=1}$ for any generic point $z\in E_t$.
Hence ${\mathcal{E}=\{(y_t\in E_t)\}_{t\in T}}$ is a non-trivial family. Without loss of generality, let us assume that the central fiber is such that
\begin{equation*}
m:=\mult_{x_0} E_0\leq \mult_{x_t}E_t
\end{equation*}
for any $t\in T$.
Thanks to Lemma \ref{LEMMA el + gonality} we have that the curve $E_0$ has self intersection ${E_0^2\geq m(m-1)+ \gon(E_0)}$.
Furthermore, Theorem \ref{theorem INTRO GONALITY OF MOVING CURVES} assures that ${\gon(E_0)\geq \gon(D)}$, where ${\gon(D)=\left[\frac{g(D)+3}{2}\right]}$ because $D$ is assumed to be very general in $\mathcal{M}_{g-1}$.
Hence
\begin{equation}\label{equation SELF INTERSECTION}
E_0^2\geq m(m-1)+ \left[\frac{(g-1)+3}{2}\right]= m(m-1)+4.
\end{equation}
Finally, inequality (\ref{equation CONTRADICTION}) leads to ${(L\cdot E_0)\leq bm-1}$.
Thus by Hodge Index Theorem we have
\begin{equation}\label{equation CONCLUSION}
m(m-1)+ 4 \leq E_0^2 \leq \frac{(L\cdot E_0)^2}{L^2}\leq \frac{(13m-1)^2}{179},
\end{equation}
but this is impossible.
Hence we get a contradiction and we proved that ${\tau(C)\leq\frac{32}{13}}$ for any generic curve of genus ${g=6}$.

\smallskip
Now, let us assume that $C$ has genus ${g=7}$ and let $D$ be a very general curve of genus ${g(D)=g-1=6}$.
In order to follow the above argument, we set ${a=77}$ and ${b=29}$.
We just proved that ${\tau(D)\leq \frac{32}{13}}$, hence the class $L$ defined in (\ref{equation L}) is still nef.
Then we can argue as above and we have ${E_0^2\geq m(m-1)+ \gon(E_0)}$.
We recall that ${E_0\subset D^{(2)}}$ is a singular reduced irreducible curve lying on the second symmetric product of the genus six curve $D$.
Therefore $E_0$ is not isomorphic to $D$, and ${\gon(E_0)\geq \gon(D)+1}$ by Theorem \ref{theorem INTRO GONALITY OF MOVING CURVES}.
Then we obtain the analogous of inequality (\ref{equation SELF INTERSECTION}), that is ${E_0^2\geq m(m-1)+ 5}$.
Thus (\ref{equation CONCLUSION}) becomes
\begin{equation*}
m(m-1)+ 5 \leq E_0^2 \leq \frac{(L\cdot E_0)^2}{L^2}\leq \frac{(29m-1)^2}{883},
\end{equation*}
which is still impossible.
Then we have that ${\tau(C)\leq\frac{77}{29}}$ for any generic curve of genus ${g=7}$.

\smallskip
Analogously, let ${g=8}$ and consider a generic curve $D$ of genus ${g(D)=g-1=7}$.
Since ${\frac{17}{6}>\frac{77}{29}\geq\tau(D)}$, we can argue as above by setting ${a=17}$ and ${b=6}$.
Then we have ${E_0^2\geq m(m-1)+ \gon(E_0)}$, where ${\gon(E_0)\geq \gon(D)+1=6}$.
Therefore
\begin{equation*}
m(m-1)+ 6 \leq E_0^2 \leq \frac{(L\cdot E_0)^2}{L^2}\leq \frac{(6m-1)^2}{37},
\end{equation*}
that still lead to a contradiction.
Thus ${\tau(C)\leq\frac{17}{6}}$ for any very general curve of genus ${g=8}$ and the proof ends.

\section*{Acknowledgements}

This is part of the author's Ph.D. thesis at University of Pavia. I am grateful to my supervisor Gian Pietro Pirola for getting me interested in these problems for his patient helpfulness that made this work possible. I would also like to thank Alessandro Ghigi for helpful discussions, Ciro Ciliberto for suggesting some appropriate references and Alberto Alzati for his remarks on lines in special position.
Finally, I would like to thank the referee for pointing out some inaccuracies and for useful suggestions, which improved this work.

\bigskip

\end{document}